\documentclass[11pt]{amsart}
\usepackage [latin1]{inputenc}
\usepackage{categorytheory,amssymb,graphicx, comment, stmaryrd}
\expandafter\def\csname opt@stmaryrd.sty\endcsname {only,shortleftarrow,shortrightarrow}
\fullpage
\usepackage{todonotes}
\usepackage{tikz-cd}
\usetikzlibrary{decorations.markings}
\usepackage{amsaddr}
\usepackage[all]{xy}
\newcommand{\dotp}{{{\raisebox{.2ex}{\scalebox{.5}{$\bullet$}}}}}
\usepackage{extpfeil}
\usepackage{fourier}
\usepackage{enumerate}

\DeclareMathOperator{\Ar}{Ar}
\DeclareMathOperator{\coeq}{coeq}

\newcommand{\WaldCat}{\mathbf{WaldCat}}

\newcommand{\sT}{\mathscr{T}}
\newcommand{\redS}{\bar{\mathbb{S}}}
\newcommand{\LP}{L^C}

\newcommand{\defeq}{\stackrel{\mathrm{def}}=}

\newcommand{\EOn}[6]
{\begin{tikzcd}[sep = tiny, ampersand replacement=\&]
	\& {#2} \\
	{#1} \&\& {} \\
	\& {#3}
	\arrow["{#4}", bend left=25, densely dashed, from=2-1, to=1-2]
	\arrow["{#6}", bend left=25, densely dashed,  from=3-2, to=2-1]
	\arrow["{#5}", bend left=25, densely dashed, from=1-2, to=2-3]
	\arrow[bend left=25, dotted, no head, from=2-3, to=3-2]
\end{tikzcd}}

\newcommand{\sma}{\wedge}

\usetikzlibrary{decorations.markings}
\tikzset{
  bottomstyle/.style={rectangle,below,text width=0pt,inner xsep=0pt, inner ysep=2.5pt},
  functormark/.style={postaction={
      decorate,
      decoration={
        markings,
        mark=at position .0 with \node[bottomstyle] {$\scriptstyle #1$};}}}
}

\makeatletter
\renewcommand{\boxed}[1]{\text{\fboxsep=.2em\fbox{\m@th$\displaystyle#1$}}}
\makeatother

\newtheorem{maintheorem}{Theorem}

\theoremstyle{remark}
\newtheorem{notation}[equation]{Notation}

\title{A cofibrant model of Waldhausen $K$-theory}
\author[Ponto]{Kate Ponto}
\author[Zakharevich]{Inna Zakharevich}

\begin{document}
\maketitle

\begin{abstract}
  This paper gives some technical results about cofibrant symmetric spectra of
  simplicial sets, and uses them to produce a model of the $K$-theory of a
  Waldhausen category that lands in the subcategory of (projective) cofibrant
  spectra.  
\end{abstract}


\section*{Introduction}

When working with algebraic $K$-theory there is often a tension of approaches.
On one hand, working with $\infty$-categories and abstract approaches produces
satisfying and beautiful results, such as Blumberg--Gepner--Tabuada's result
about the universal property of $K$-theory \cite{blumberggepnertabuada}.  In
this approach it is not important to construct models, and this often
strengthens and streamlines results.  Unfortunately, sometimes (such as in cases
where there is not a stable $\infty$-categorical structure, such as
combinatorial $K$-theory \cite{zakharevich_assembler,BGMMZ}) it is necessary to work
directly with explicit constructions of $K$-theory, such as Waldhausen's
$S_\dotp$-construction.  Unfortunately, with such direct constructions we often
run into problems: we must work with model structures, and thus we often need to
know that the spectra that we are working with are cofibrant.

Waldhausen's $S_\dotp$-construction naturally produces a symmetric spectrum of
simplicial sets.  Symmetric spectra of simplicial sets have three main model
structures on them: the level model structure, the flat model structure, and the
projective model structure.  These have decreasing collections of cofibrations,
with the projective model structure being the one most useful for many
applications.  
It is thus desirable to know when the $S_\dotp$-construction
produces projectively cofibrant spectra.

\begin{maintheorem}[Proposition~\ref{prop:Kflat}, Corollary~\ref{ex:K_not_projective}]
  For a Waldhausen category $\C$, the $S_\dotp$-construction produces a
  symmetric spectrum of simplicial sets $K(\C)$ which is flat but not projective
  cofibrant.  
\end{maintheorem}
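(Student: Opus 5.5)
Recall the shape of $K(\C)$: its $n$th level is $K(\C)_n = |wS_\dotp^{(n)}\C|$, the diagonal of the $n$-fold iterated $S_\dotp$-construction, viewed as a simplicial set. The symmetric group $\Sigma_n$ acts by permuting the $n$ simplicial directions. The structure maps $S^1\sma K(\C)_n \to K(\C)_{n+1}$ come from the inclusion of $1$-simplices $S^1 \to S_\dotp$. For flatness I will use the standard latching criterion (Hovey--Shipley--Smith): a symmetric spectrum $X$ is flat ($S$-cofibrant) iff for each $n$ the latching map $L_nX \to X_n$ is a monomorphism of simplicial sets. Here $L_nX$ is the colimit over proper sub-inclusions of $\bar S^{\otimes}$-smash powers, i.e. the union of the images of $\Sigma_n^+\sma_{\Sigma_k\times\Sigma_{n-k}} X_k\sma S^{n-k}$ for $k<n$. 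Note that no freeness of the action is required for flatness.

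\textbf{Flatness.} The plan is to identify the image of each $X_k\sma S^{n-k}\to X_n$ concretely. A simplex of $S^{(n)}_\dotp\C$ in multidegree $(m_1,\dots,m_n)$ is a diagram indexed by $\Ar[m_1]\times\cdots\times\Ar[m_n]$. The image from level $k$, smashed with the circles in the directions indexed by a subset $J$ of size $n-k$, consists of those diagrams that are ``degenerate from $[1]$'' in the $J$-directions: every coordinate $i\in J$ factors through a map $[m_i]\to[1]$, and the diagram is the $1$-simplex construction in those coordinates. The key steps, in order, are the following.
\begin{enumerate}[(i)]
\item Show that each map $\Sigma_n^+\sma_{\Sigma_k\times\Sigma_{n-k}}X_k\sma S^{n-k}\to X_n$ is injective off the basepoint. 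This holds because a diagram remembers its values, and the $J$-directions are determined by the diagram.
\item Show that the intersection of the images for two subsets $J,J'$ is exactly the image for $J\cup J'$. This makes the union of the images a pushout-compatible colimit, so the colimit $L_n$ injects.
\end{enumerate}
These facts should be checked levelwise before taking the diagonal. Passing to $w_\dotp$ and to diagonals preserves monomorphisms and these colimit decompositions, so the latching map is a monomorphism. I expect step (ii) to be the main technical point. There is subtlety with the basepoint: diagrams with a zero object collapse to the basepoint. I would handle it by working with the reduced construction, where all objects in the degenerate positions are forced to be $*$.

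\textbf{Failure of projectivity.} Projective cofibrancy additionally requires that $\Sigma_n$ act freely on $X_n\setminus L_nX$ (away from the basepoint). So it suffices to exhibit, for some $n\ge2$, a nondegenerate simplex of $K(\C)_n$ that is not in the latching object but is fixed by a transposition. I will take $n=2$ and a diagonal simplex in bidegree $(m,m)$. Choose a diagram $A\colon \Ar[m]\times\Ar[m]\to\C$ that is symmetric under swapping the two factors, for instance $A_{(i,j),(k,l)} = B_{i,j}\otimes$-type products. Concretely, take $m=2$ and a symmetric bicartesian grid built from one nonzero object $c$ and its sums, such as the $2$-simplex of $S_\dotp S_\dotp\C$ coming from $c\oplus c$ with the symmetric filtration. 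This simplex is fixed by the transposition $\tau\in\Sigma_2$. It is not in the image of $X_1\sma S^1$ provided neither direction factors through $[1]$, which holds once $m\ge2$ and the filtration has distinct nonzero graded pieces. Such an example needs only one nonzero object, so it exists for any nontrivial $\C$. (If $\C$ is trivial the spectrum is a point, so the statement implicitly assumes $\C$ has a nonzero object.)

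The main obstacle in this part is verifying that the fixed simplex is genuinely nondegenerate in the diagonal and lies outside $L_2$. I would do this by an explicit check at $m=2$.
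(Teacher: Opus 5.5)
Your flatness argument is a workable alternative to the paper's. The non-projectivity half, however, has a genuine gap: the simplex you propose is not fixed by the transposition.

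\textbf{Non-projectivity.} The $\Sigma_2$-action literally permutes the coordinates of a diagram, so you need $\tau\cdot A=A$ on the nose, morphisms included, not just $\tau\cdot A\cong A$. Take the ``symmetric filtration'' of $c\vee c$, i.e.\ the pushout square with $A(01,01)=0$, $A(01,02)=A(02,01)=c$ and $A(02,02)=c\vee c$. Applying $\tau$ gives the square in which the copy of $c$ at $(01,02)$ includes via the other summand. The two are isomorphic through the twist of $c\vee c$, but they are different simplices.

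This cannot be repaired by a better choice of sums. Strict fixedness forces $A(01,02)=A(02,01)$, the two legs out of $A(01,01)$ to be one and the same map $f$, and the two maps into $A(02,02)$ to coincide. If the square is a pushout, equality of the two coprojections then forces $f$ to be an epimorphism. That rules out legs such as $0\to c$ or $c\to c\vee c$ with $c\not\cong 0$. (A Waldhausen category also has no $\otimes$ from which to build ``product-type'' diagrams.)

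The missing observation is that $S^{(2)}_{2,2}\mathcal{C}$ does not require pushout squares. It only requires that $A(01,02)\cup_{A(01,01)}A(02,01)\to A(02,02)$ be a cofibration. So take $A(01,01)=A(01,02)=A(02,01)=A$ with identity maps, and $A(02,02)=B$, reached from both sides by the same cofibration $A\rightarrowtail B$, with quotients chosen symmetrically. This square is strictly fixed. It is neither $1$- nor $2$-trivial when $A\neq 0$ and $A\rightarrowtail B$ is not an isomorphism, because the rows $A=A$ and $A\rightarrowtail B$ are distinct nonzero objects of $S_2\mathcal{C}$. This is the paper's example. Taking $B=A\vee A$ confirms your claim that a single object $A\not\cong 0$ suffices. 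Nondegeneracy is a red herring, since freeness is required on all simplices outside $\Lambda_2$.

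\textbf{Flatness.} Step (i) as written is false, and in fact contradicts (ii). A diagram trivial in both directions, such as a single nonzero object in bidegree $(1,1)$ of $K(\mathcal{C})_2$, lies in the images of both summands $J=\{1\}$ and $J=\{2\}$ of $\Sigma_2^+\wedge X_1\wedge S^1$. So the $J$-directions are not determined by the diagram. What you need is injectivity of each single summand $X_{n-|J|}\wedge S^J\to X_n$, i.e.\ of the iterated structure maps. Read $L_nX$ as the colimit $\operatorname{colim}_{J\neq\emptyset}X_{n-|J|}\wedge S^J$, not as the union of images. Then per-summand injectivity together with (ii) shows $L_nX\to X_n$ is monic, by the standard argument for a diagram of subobjects closed under pairwise intersection.

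This differs from the paper's route. The paper checks only the case $|J|=|J'|=1$ together with monic $\sigma$ (the ``increasing'' condition). It then invokes its general theorem that increasing spectra are flat, proved via roots and the combinatorial latching space. Your direct argument needs (ii) for all $J,J'$, which is just as easy for $S_\bullet$-diagrams: restrict to the maximal coordinate in one trivial direction. It bypasses that machinery. The paper's route in exchange gives a criterion valid for arbitrary spectra, along with the description of $\Lambda_n$ used in its projectivity criterion.
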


This is unfortunate, but it turns out that it is possible to fix it.

\begin{maintheorem}[Definition~\ref{def:cofibK}, Corollary~\ref{cor:K'proj}]
  There exists a functor $K': \WaldCat \rto \Sp$ together with a natural weak
  equivalence $K' \Rto K$ such that $K'(\C)$ is projective cofibrant for all
  Waldhausen categories $\C$.
\end{maintheorem}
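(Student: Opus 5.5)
The natural route is to construct $K'$ by a functorial cofibrant replacement that is built by hand from the structure of the $S_\dotp$-construction. Using the small object argument would also work, but it is less explicit. Recall that $K(\C)$ has level $n$ equal to the diagonal of the $n$-fold simplicial set $wS_\dotp^{(n)}\C$, and $\Sigma_n$ acts by permuting the simplicial directions. A symmetric spectrum $X$ is projectively cofibrant exactly when each latching map $L_nX \rto X_n$ is a $\Sigma_n$-cofibration which is \emph{free} away from the image, i.e.\ a monomorphism such that $\Sigma_n$ acts freely on the simplices of $X_n$ not in the image of $L_nX$.

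The first step establishes that $K(\C)$ is flat (Proposition~\ref{prop:Kflat}). This amounts to showing that each latching map is injective. The latching object $L_nK(\C)$ consists of the simplices of $wS^{(n)}_\dotp\C$ that are degenerate, meaning trivial, in at least one of the last $k$ directions, for some $k\geq 1$.

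The second step identifies the failure of projectivity. Non-latching simplices can carry nontrivial stabilizers; for example, a diagonal cube built from the same filtered object in every direction is fixed by transpositions.

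The third step makes the action free. Define $K'(\C)_n$ as the subspectrum of $K(\C)\sma (E\Sigma_\bullet)_+$ obtained by levelwise smashing (or taking the product) with $E\Sigma_n$, arranged compatibly with the structure maps. One way to do this is to take the spectrum $K(\C)\otimes \mathcal{E}$, where $\mathcal{E}$ is a projectively cofibrant replacement of the sphere spectrum whose levels are $E\Sigma_n$-free, for instance $\mathcal{E}_n=(E\Sigma_n)_+\sma S^n$ with suitable structure maps. Equivalently, and more cleanly, I would replace each multisimplicial set $wS^{(n)}_\dotp\C$ by the diagonal of $wS^{(n)}_\dotp\C\times E\Sigma_n$, with $E\Sigma_n$ the nerve of the translation groupoid. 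The structure maps would come from the inclusions $\Sigma_n\times\Sigma_m\rto \Sigma_{n+m}$ inducing $E\Sigma_n\times E\Sigma_m\rto E\Sigma_{n+m}$, combined with the usual maps $S^1\sma wS^{(n)}\C\rto wS^{(n+1)}\C$. This must be done with the basepoint collapsed: we quotient by the subspace where the $K$-coordinate is the basepoint. The projection $E\Sigma_n\rto *$ gives a natural map $K'(\C)\rto K(\C)$. It is a level equivalence because $E\Sigma_n$ is contractible, so it is in particular a stable equivalence. Naturality in exact functors is clear.

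The main obstacle is verifying projective cofibrancy, i.e.\ that $L_nK'(\C)\rto K'(\C)_n$ is an injection with $\Sigma_n$-free complement. Freeness is the easy part: $\Sigma_n$ acts freely on $E\Sigma_n$, so it acts freely on every non-basepoint simplex of $K'(\C)_n$. The hard part is that the latching object must still inject and must be computed correctly, because the maps $E\Sigma_k\times E\Sigma_{n-k}\rto E\Sigma_n$ are not jointly injective on their union over $k$. I would first try to show that $L_nK'(\C)$ is the union over $k$ of the images of $S^k\sma K(\C)_{n-k}\times (E\Sigma_k\times E\Sigma_{n-k})$. I would then check, by a pullback computation inside $K(\C)_n\times E\Sigma_n$, that the intersections of these images match the intersections in the latching colimit. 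This uses the flatness of $K(\C)$ from Proposition~\ref{prop:Kflat} together with the fact that the subgroups $\Sigma_k\times\Sigma_{n-k}$ act freely on $E\Sigma_n$. If the intersections fail to match, the fallback is to define $K'$ as the latching-cell construction that attaches free cells $\Sigma_{n+}\sma(\text{non-latching simplices})$ inductively in $n$. This yields projective cofibrancy by definition, and reduces the equivalence claim to the same contractibility argument.
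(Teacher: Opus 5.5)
Your main construction does not give a symmetric spectrum, and it cannot be repaired while keeping the feature you rely on. Take $K'(\C)_n = K(\C)_n\sma(E\Sigma_n)_+$ with structure maps $(x,e)\sma s\mapsto(\sigma(x,s),j(e))$, where $j\colon E\Sigma_n\rto E\Sigma_{n+m}$ is induced by $\Sigma_n\subseteq\Sigma_{n+m}$. The iterated map $K'(\C)_n\sma S^m\rto K'(\C)_{n+m}$ must be $\Sigma_n\times\Sigma_m$-equivariant. For $\tau\in\Sigma_m$ and $\sigma(x,s)$ not the basepoint, this forces $(1\times\tau)\cdot j(e)=j(e)$, which is false because $\Sigma_{n+m}$ acts freely on $E\Sigma_{n+m}$.

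More fundamentally, the property you call ``the easy part'' is itself the obstruction. Let $\delta\in S^2$ be a non-basepoint simplex fixed by $\Sigma_2$, for instance a diagonal simplex $(u,u)$ with $u\neq *$; these exist in every positive dimension. If $Y_{n+2}$ is $\Sigma_{n+2}$-free away from the basepoint, then $\sigma(y\sma\delta)$ is fixed by $1\times\Sigma_2$, so it is the basepoint. Hence for any map $g\colon Y\rto K(\C)$ of spectra, $\sigma(g_n(y)\sma\delta)$ is the basepoint. Since the structure maps of $K(\C)$ are monic, $g_n$ is trivial. So no spectrum with levelwise free actions can map to $K(\C)$ by a level equivalence. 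The same argument shows that a projective spectrum whose levels are all free off the basepoint is $*$, so the $\mathcal E$ you propose does not exist. Also, $\mathbb S$ is already projective, so smashing with a cofibrant replacement of it changes nothing. Projectivity only asks for freeness off $\Lambda_n$, and $\Sigma_n$-fixed simplices inside $\Lambda_n$ are unavoidable.

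The missing idea is a way to remove only the \emph{non-latched} fixed simplices, such as the square in Example~\ref{ex:i_trivial}, while keeping the latched ones. The paper does this by replacing $\C$ with $\C\times\tilde\Z$ and keeping only essentially nonrepeating diagrams. The forgetful map is then a level equivalence (Proposition~\ref{prop:Walequiv}), and the result is increasing, hence flat (Lemma~\ref{lem:K'flat}). A diagram fixed by a nonidentity permutation must be $i$-trivial for some $i$ (Proposition~\ref{prop:fix->triv}), hence latched, so Theorem~\ref{thm:characterize_flat_projective_spectra} gives projectivity.

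Your fallback is too vague to count as a proof. Attaching free cells on non-latching simplices does not by itself produce a level equivalence, and contractibility of $E\Sigma_n$ is not the relevant input. Made precise, the fallback is the functorial factorization of $L_nK'\rto K(\C)_n$ into a $\Sigma_n$-free cofibration followed by an acyclic fibration, i.e.\ the small-object cofibrant replacement you set aside at the start. That does prove the bare existence statement. However, it gives no explicit model and none of the further properties (Corollary~\ref{cor:injproj}, Proposition~\ref{prop:reedycofib}) that the paper's construction is designed to have.
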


Moreover, both of these functors behave well on ``injective'' functors:
\begin{maintheorem}[Proposition~\ref{prop:injflat}, Corollary~\ref{cor:injproj}]
  Let $F: \C \rto \D$ be an exact functor between Waldhausen categories which is
  faithful and injective on objects.  Then $K(F)$ is a flat cofibration and
  $K'(F)$ is a projective cofibration.
\end{maintheorem}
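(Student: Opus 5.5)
The proof will reduce the statement to a levelwise criterion for flat and projective cofibrations, and then check that criterion combinatorially on the iterated $S_\dotp$-construction. Flat (S-)cofibrations of symmetric spectra of simplicial sets are characterized by latching maps: $f\colon X\to Y$ is a flat cofibration if and only if, for every $n$, the relative latching map
\[ X_n \cup_{L_nX} L_nY \rto Y_n \]
is a $\Sigma_n$-equivariant monomorphism. Projective cofibrations carry the additional requirement that $\Sigma_n$ act freely on the simplices not in the image. Here $L_nX$ is the colimit over proper inclusions $\mathbf{k}\subsetneq\mathbf{n}$ of $S^{n-k}\wedge X_k$. So the task is to compute these latching objects for $K(\C)_n=|wS_\dotp^{(n)}\C|$, viewed as the diagonal of a multisimplicial set.

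\textbf{The flat statement for $K(F)$.} A simplex of $K(\C)_n$ is an $n$-fold $S_\dotp$-diagram indexed by $\Ar[m_1]\times\cdots\times\Ar[m_n]$, together with a $w$-sequence. The image of $S^{1}\wedge K(\C)_{n-1}\rto K(\C)_n$ in coordinate $i$ consists of diagrams that factor through the degeneracy/base point structure in that coordinate. Concretely, these are diagrams with $m_i$-direction essentially trivial: every entry is $0$ unless the $i$-th index lies in a single "suspension" position. The latching object $L_nK(\C)$ is then the union over $i$ of these subobjects. Because the relevant diagram consists of injections with pullback-like intersections (a subset $I$ of coordinates is "suspension coordinates"), this union is a genuine union of sub-simplicial sets, and $L_nK(\C)\rto K(\C)_n$ is injective. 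This recovers Proposition~\ref{prop:Kflat}.

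For $F$ faithful and injective on objects, the map $K(F)_n$ is injective on simplices. A diagram in $\D$ determines its preimage, since objects and morphisms lift uniquely. For the pushout-product map, it remains to check that a simplex of $K(\C)_n$ whose image under $F$ lies in $L_nK(\D)$ already lies in $L_nK(\C)$. This holds because membership in $L_n$ is a condition on which entries are the zero object and which maps are identities. $F$ is exact, so it preserves $0$, and it is injective on objects, so it reflects $0$; faithfulness then reflects identities. Hence the union in $Y_n$ intersects $X_n$ exactly in $L_nX$, and the map is a monomorphism.

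\textbf{The projective statement for $K'(F)$.} By Definition~\ref{def:cofibK}, $K'$ is presumably obtained from $K$ by a functorial construction that frees up the $\Sigma_n$-actions, for instance by smashing levelwise with $E\Sigma_{n+}$, or via a cofibrant replacement that is a left Quillen functor from flat to projective on such maps. The plan is to show that $K'$ takes flat cofibrations that are levelwise monomorphisms to projective cofibrations. Then Corollary~\ref{cor:injproj} follows from the flat statement, as Corollary~\ref{cor:K'proj} presumably does for $*\rto\C$.

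\textbf{Main obstacle.} The hard step is the latching computation: showing that the colimit $L_nK(\C)$ injects, i.e.\ that the various suspension images intersect exactly as the colimit predicts, with no extra identifications. I would verify this by writing a simplex's "support" as the set $I$ of coordinates in which it is suspended, and checking that the diagram $I\mapsto$ (simplices with support $\supseteq I$) is a cubical diagram of sub-objects closed under intersection.
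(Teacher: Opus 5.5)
Your argument for $K(F)$ is essentially the paper's. You reduce via the relative latching map (Corollary~\ref{cor:incflatmor}, which needs $K(\D)$ flat) to checking $K(F)_n(K(\C)_n)\cap\Lambda_nK(\D)\subseteq K(F)_n(\Lambda_nK(\C))$. You identify $\Lambda_n$ with the diagrams that are $i$-trivial for some $i$ (Lemma~\ref{lem:inLambda}) and observe that $F$ reflects $i$-triviality. Your justification for that last step (injectivity on objects reflects equality of entries and the zero object, faithfulness then reflects identities) is more explicit than the paper's. The one divergence is that you propose to prove flatness of $K(\C)$ directly by a punctured-cube intersection argument, rather than citing Proposition~\ref{prop:Kflat} and the increasing-spectra machinery. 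That can work for this particular spectrum. It does require showing that a diagram which is $i$-trivial for all $i\in I$ lies, up to a shuffle, in the image of the $|I|$-fold structure map, and you have not done this.

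The projective half has a genuine gap. You guess that $K'$ is $K$ followed by some functor on spectra that frees the $\Sigma_n$-actions, and you plan to show that this functor turns flat cofibrations into projective ones. But $K'$ is not of that form. By definition $K'(\C)_q=|w\tilde S^{(q)}_\dotp\C_{\tilde\Z}|$ is built from essentially nonrepeating diagrams in $\C_{\tilde\Z}$, and $K'$ is a functor on $\WaldCat$, not on spectra. So your plan says nothing about the functor in the statement. In particular it gives no reason why $\Sigma_n$ should act freely on the complement of the image of $L^T_nK'(\D)\cup_{L^T_nK'(\C)}K'(\C)_n\rto K'(\D)_n$.

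Three steps are needed:
\begin{enumerate}
\item $K'(F)$ is a level monomorphism and a flat cofibration. This follows from your $K(F)$ argument applied to $F\times\tilde\Z$, once you check that this functor preserves the $\tilde\Z$-labels and the zero subsieve, so it carries essentially nonrepeating diagrams to essentially nonrepeating ones, and that it still reflects $i$-triviality.
\item $K'(\D)$ is projective. This is exactly where the nonrepeating condition enters. It rules out fixed points like the one in Example~\ref{ex:i_trivial}, so every simplex with nontrivial stabilizer is $i$-trivial and hence lies in $\Lambda_nK'(\D)$ (Proposition~\ref{prop:fix->triv}, Corollary~\ref{cor:K'proj}).
\item A flat cofibration into a projective spectrum is a projective cofibration (Corollary~\ref{cor:flatY->proj}). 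The complement of the image of the relative latching map is a $\Sigma_n$-invariant subset of $K'(\D)_n\setminus\Lambda_nK'(\D)$, on which $\Sigma_n$ acts freely.
\end{enumerate}
Steps 2 and 3 do not appear in your proposal, so as written the claim that $K'(F)$ is a projective cofibration is unproved.
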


We prove these results by developing an explicit description of flat cofibrant
spectra (Proposition~\ref{prop:flat->inc}).
With this explicit description we strengthen the cofibrancy results to simplicial spectra, as
well.  This is useful in particular when taking realizations of symmetric
spectra: since a realization is a homotopy colimit, knowing that the spectrum is
Reedy projective cofibrant implies that the realization is projective cofibrant.

\begin{maintheorem}[Proposition~\ref{prop:reedycofib}]
  For a simplicial Waldhausen category $\C_\dotp$, the simplicial spectrum
  $K(\C_\dotp)$ is Reedy cofibrant in the flat model structure, and
  $K'(\C_\dotp)$ is Reedy cofibrant in the projective model structure.
  Consequently, their geometric realizations are cofibrant in the flat and
  projective model structures, respectively.
\end{maintheorem}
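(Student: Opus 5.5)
The plan is to reduce Reedy cofibrancy of the simplicial spectrum $[p]\mapsto K(\C_p)$ to cofibrancy of its latching maps, and then to control those maps with the results on injective functors (Proposition~\ref{prop:injflat}, Corollary~\ref{cor:injproj}).

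Recall what Reedy cofibrancy requires. A simplicial object $X_\dotp$ in a model category is Reedy cofibrant when, for each $p$, the latching map $L_pX \rto X_p$ is a cofibration. Here $L_pX$ is the colimit of $X_q$ over the surjections $[p]\rto[q]$ with $q<p$. Equivalently, in the Eilenberg--Zilber form, each $X_p$ is the union of the images of the degeneracies together with a nondegenerate complement, and these images are glued along their intersections by pushouts.

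The first step is to analyse the degeneracies. Each degeneracy $s_i:\C_{p}\rto\C_{p+1}$ has a retraction $d_i$, so it is faithful and injective on objects. Proposition~\ref{prop:injflat} then shows that $K(s_i)$ is a flat cofibration. This alone is not enough, since the latching object is a colimit of several such maps.

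The second step is to compute the latching object levelwise, using the explicit description of flat cofibrant spectra in Proposition~\ref{prop:flat->inc}.
\begin{itemize}
\item In spectrum level $n$, $K(\C)_n$ is the diagonal of the $n$-fold $S_\dotp$-construction. A simplex there is a functor out of a finite poset into $\C$ with chosen cofibrations and quotients.
\item Degeneracies act objectwise on such diagrams. The Eilenberg--Zilber lemma for the simplicial set $\mathrm{ob}\,\C_\dotp$ and $\mathrm{mor}\,\C_\dotp$ says that intersections of images of degeneracies are images of further degeneracies.
\item Hence the colimit $L_pK(\C_\dotp)$ is computed as a union of subobjects of $K(\C_p)$. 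Concretely, it is the sub-spectrum of $S_\dotp$-diagrams all of whose entries are degenerate along a common $s_i$.
\item The map $L_p\rto K(\C_p)$ is therefore a levelwise monomorphism, equivariant for $\Sigma_n$. Proposition~\ref{prop:flat->inc} characterises flat cofibrations as maps whose relative latching maps (in the sense of symmetric sequences) are monomorphisms on which $\Sigma_n$ acts freely away from the image. I will verify that condition exactly as in the proof of Proposition~\ref{prop:injflat}: the new simplices in level $n$ are those not coming from the suspension of level $n-1$, and the action on these is free because the permutations permute distinct $S_\dotp$-directions.
\end{itemize}

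The main obstacle is this commuting of the latching colimit with the structural maps. I must check that taking the union over the degeneracy images preserves the "not in the image of the suspension" condition; that is, being a suspension-image and being degenerate should interact like an intersection of subcomplexes. I would try to show that the structure map $\Sigma K(\C)_{n-1}\rto K(\C)_n$ and the degeneracy images are both defined by conditions on individual entries of a diagram (an entry being zero, resp. lying in the image of $s_i$). The lattice of such subobjects is then distributive, and the colimit is a union.

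For $K'$, I would run the same argument using the construction in Definition~\ref{def:cofibK}, replacing Proposition~\ref{prop:injflat} by Corollary~\ref{cor:injproj}. Presumably $K'$ is obtained from $K$ by a functorial construction that turns flat cofibrations built in this "monomorphism with free action" way into projective ones. Because that construction preserves colimits, it commutes with latching objects, so the latching maps of $K'(\C_\dotp)$ are projective cofibrations.

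Finally, for a Reedy cofibrant simplicial object in a simplicial model category, the geometric realization is a Reedy-left-Quillen functor applied to it. Its skeletal filtration is built by pushouts along $L_pX\otimes\Delta^p\cup X_p\otimes\partial\Delta^p\rto X_p\otimes\Delta^p$, which are cofibrations by the pushout-product axiom. So the realizations are flat, respectively projective, cofibrant.
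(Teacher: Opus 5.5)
There is a genuine gap in the projective half, and the flat half rests on a misstated criterion.

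\emph{The projective half.} For $K'$ you rely on a guess: that $K'$ is obtained from $K$ by a colimit-preserving functor on spectra which upgrades flat cofibrations to projective ones. That is not how $K'$ is built. $K'(\C)$ is the subspectrum of $K(\C_{\tilde\Z})$ on essentially nonrepeating diagrams, so it is constructed at the level of Waldhausen categories and is not a functor of the spectrum $K(\C)$. Nothing therefore lets you commute it past the latching colimit. Corollary~\ref{cor:injproj} only says that each single $K'(s_i)$ is a projective cofibration, and, as you note yourself, that does not control the union of their images.

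The missing ingredient is Corollary~\ref{cor:flatY->proj}: a flat cofibration whose target is projective is automatically a projective cofibration. By Corollary~\ref{cor:K'proj}, $K'(\C_p)$ is projective. That corollary is where the freeness ``because the permutations permute distinct $S_\dotp$-directions'' actually lives, via Proposition~\ref{prop:fix->triv}. So it suffices to show that $L_p^\R K'(\C_\dotp)\rto K'(\C_p)$ is a \emph{flat} cofibration, which is the same argument as for $K$.

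\emph{The flat half.} Proposition~\ref{prop:flat->inc} does not characterize flat cofibrations. ``Monomorphism with $\Sigma_n$ acting freely off the image'' is the projective condition, not the flat one. Worse, the freeness you plan to verify is false for $K$: Corollary~\ref{ex:K_not_projective} exhibits an unlatched simplex of $K(\C)_2$ fixed by the transposition. So this step fails as written.

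What you need is Corollary~\ref{cor:incflatmor}. $K(\C_p)$ is flat, and the latching map $\kappa$ is a levelwise monomorphism onto the union of the images of the $K(s_i)$. Hence $\kappa$ is a flat cofibration if and only if $\kappa_m(L_m)\cap\Lambda_mK(\C_p)\subseteq\kappa_m(\Lambda_m L)$ for all $m$. By Lemma~\ref{lem:inLambda}, this says: a $j$-trivial diagram lying in the image of some $K(s_i)$ is the image of a $j$-trivial diagram, and hence lies in $\Lambda_m L$. This follows at once by applying the retraction $d_i$. It is a map of pointed Waldhausen categories, so it preserves the distinguished zero and identities, and hence preserves $j$-triviality.

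Distributivity of a lattice of subobjects is automatic for subobjects of a simplicial set and is not the point here. Your final step on realizations, via the skeletal filtration and the pushout-product axiom, is fine; the paper simply cites Goerss--Jardine for it.
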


\subsection*{Organization}

Section~\ref{sec_tech_foundations} introduces the concept of \emph{latching} and
\emph{increasing spectra}, which are the foundational concepts for proving the
main theorems.  It also states the main technical results about increasing
spectra that are needed to prove the main theorems, including the foundational
claim that increasing spectra are exactly flat spectra.
Section~\ref{sec:cofib_K_theory} uses the results of
Section~\ref{sec_tech_foundations} to prove the first three main theorems about
$K$-theory of Waldhausen categories; Section~\ref{sec:k_simplicial_waldhausen}
discusses the application to simplicial Waldhausen categories.
Section~\ref{sec:inc->flat} proves that a spectrum is increasing if and only if
it is flat, modulo some
technical combinatorial results that are proved in
Section~\ref{sec:tech:inc->flat}.  

\subsection*{Notation and conventions}

In this paper we work exclusively with symmetric spectra of simplicial sets; we do not specify ``of
simplicial sets'' anywhere beyond this section.

$\mathbb{S}$ denotes the sphere spectrum $S^0,S^1,S^1\sma S^1,S^1\sma S^1\sma
S^1,\ldots$, with identity morphisms as the structure maps.

The symmetric group is denoted $\Sigma_n$.

\subsection*{Acknowledgements}

Zakharevich was supported in part by NSF grant DMS-2405035.  Ponto was partially supported by NSF grant DMS-2404503.

\section{Foundational technical results}\label{sec_tech_foundations}
In this section we state the essential technical results of the paper.   As they
are easy to state and apply, but laborious to prove, we start with the
application and postpone the proofs to Sections \ref{sec:inc->flat} and \ref{sec:tech:inc->flat}.

 The spectrum $\redS$, the \emph{truncated
    sphere spectrum}, is defined to be the spectrum which has a single point in
  degree $0$ and then $S^q$ in degree $q$ for $q>0$. 
\begin{definition}[{\cite[Definition 5.2.1]{hoveyshipleysmith},
    \cite[Construction I.5.29, Proposition I.5.39]{schwedebook}}]\label{defn:traditional_latching}
  Let $X$ be a symmetric spectrum.  The \emph{traditional
    latching space} $L_n^TX$, is the $n$-level of $X \sma \redS$.
\end{definition}

The map $\nu^T:X \sma \redS \rto  X\sma \mathbb{S}$ has at level $n$ a
map 
\begin{equation}\label{eq:nu}\nu_n^T:L_n^TX \rto X_n.\end{equation}

\begin{definition}[{\cite[Definition III.3.1]{schwedebook}}] \label{def:flat} An
   spectrum $X$ is \emph{flat} (i.e. cofibrant in the stable flat
  model structure) if for all $n \geq 0$ the map $\nu_n^T$ is a cofibration.  It
  is \emph{projective} (i.e. cofibrant in the stable projective model structure)
  if for all $n \geq 0$ the map $\nu_n^T$ is a free $\Sigma_n$-cofibration.  
  
  A
  morphism $X \rto Y$ of spectra is a \emph{flat cofibration}
  (resp. \emph{projective cofibration}) if for all $n \geq 0$ the induced maps
  $L_n^TY \cup_{L_n^T X} X_n \rto Y_n$ are cofibrations (resp. $\Sigma_n$-free
  cofibrations).
\end{definition}

\begin{remark}\label{rmk:characterize_cofib}
For the purpose of this paper, it is sufficient to know that cofibrations of
$\Sigma_n$-simplicial sets are exactly the monomorphisms (this is stated in
\cite[Below Definition III.4.1]{schwedebook}), and that the free
$\Sigma_n$-cofibrations are those cofibrations where the $\Sigma_n$-action is
free on the complement of the image (see \cite[Proposition V.2.4, Corollary
V.2.10]{goerssjardine}, applied to the case where $G_\dotp$ is constant).
\end{remark}

Before beginning to analyze the structure of spectra, we will need several
facts about shuffles.

\begin{definition}
  A permutation $\tau\in \Sigma_n$ is an \emph{$(n-k,k)$-shuffle} if $\tau(1) <
  \cdots < \tau(n-k)$ and $\tau(n-k+1) < \cdots < \tau(n)$.
\end{definition}

The following are the basic facts about shuffles that we will be using:
\begin{lemma}  \label{lem:d_latching}
  \begin{enumerate}
  \item The $(n-k,k)$-shuffles represent the cosets of
    $\Sigma_{n-k}\times \Sigma_k$ in $\Sigma_n$.  In other words, for any
    $\gamma\in \Sigma_n$ there exists a unique factorization
    $\gamma = \gamma'(\tau,\rho)$, where $\gamma'$ is an $(n-k,k)$-shuffle and
    $(\tau,\rho)\in \Sigma_{n-k}\times \Sigma_k$.
  \item For a $(k,n-k)$-shuffle $\gamma$ and an $(\ell, k-\ell)$-shuffle $\tau$,
    there is a unique $(\ell, n-\ell)$-shuffle $\gamma'$ and a unique
    $\rho \in \Sigma_{n-\ell}$ such that $\gamma(\tau,1) = \gamma'(1, \rho)$.
  \end{enumerate}
\end{lemma}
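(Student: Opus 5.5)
Both parts reduce to explicit descriptions of how permutations act on the ordered set $\{1,\dots,n\}$. Throughout, $\Sigma_{n-k}\times\Sigma_k\subseteq\Sigma_n$ is the block subgroup: $(\tau,\rho)$ acts by $\tau$ on $\{1,\dots,n-k\}$ and by $\rho$ (shifted by $n-k$) on $\{n-k+1,\dots,n\}$. Composition is right-to-left, so $\gamma'(\tau,\rho)$ means first apply $(\tau,\rho)$, then $\gamma'$.

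For (1), fix $\gamma\in\Sigma_n$ and let $A=\gamma(\{1,\dots,n-k\})$ and $B=\gamma(\{n-k+1,\dots,n\})$.
\begin{itemize}
\item A permutation $\gamma'$ lies in the coset $\gamma(\Sigma_{n-k}\times\Sigma_k)$ exactly when $\gamma'$ maps $\{1,\dots,n-k\}$ onto $A$ (and hence $\{n-k+1,\dots,n\}$ onto $B$).
\item Among such $\gamma'$, exactly one is an $(n-k,k)$-shuffle. It is the map sending $1,\dots,n-k$ to the elements of $A$ in increasing order, and $n-k+1,\dots,n$ to the elements of $B$ in increasing order.
\item Given this $\gamma'$, we get $(\tau,\rho)=\gamma'^{-1}\gamma$. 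This lies in the block subgroup because $\gamma'^{-1}\gamma$ preserves $\{1,\dots,n-k\}$.
\item Uniqueness: if $\gamma'(\tau,\rho)=\gamma''(\tau',\rho')$ with both $\gamma',\gamma''$ shuffles, then both map $\{1,\dots,n-k\}$ onto the same set $A$. A shuffle is determined by that image set, so $\gamma'=\gamma''$, and then $(\tau,\rho)=(\tau',\rho')$.
\end{itemize}
Counting gives a cross-check: there are $\binom{n}{k}$ shuffles, which matches the index of the subgroup.

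For (2), apply (1) with $k$ replaced by $n-\ell$ to the permutation $\gamma(\tau,1)$. This gives a unique factorization $\gamma(\tau,1)=\gamma'(\sigma,\rho)$ with $\gamma'$ an $(\ell,n-\ell)$-shuffle and $(\sigma,\rho)\in\Sigma_\ell\times\Sigma_{n-\ell}$. It remains to show $\sigma=1$. By the criterion in (1), this amounts to showing that $\gamma(\tau,1)$ is increasing on $\{1,\dots,\ell\}$.
\begin{itemize}
\item Since $\ell\le k$, the permutation $(\tau,1)$ maps $\{1,\dots,\ell\}$ into $\{1,\dots,k\}$ via $\tau$, and $\tau$ is increasing on $\{1,\dots,\ell\}$ because it is an $(\ell,k-\ell)$-shuffle.
\item $\gamma$ is increasing on $\{1,\dots,k\}$ because it is a $(k,n-k)$-shuffle.
\item A composite of increasing maps is increasing, so $\gamma(\tau,1)$ is increasing on $\{1,\dots,\ell\}$.
\end{itemize}
Therefore $\sigma=1$, and uniqueness of $\gamma'$ and $\rho$ follows from the uniqueness in (1).

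The main obstacle is bookkeeping rather than mathematics: keeping the convention for which block each factor acts on consistent with the paper's conventions. In particular, in (2) the identity factor $1$ sits in $\Sigma_{n-k}$ in $(\tau,1)$ and in $\Sigma_\ell$ in $(1,\rho)$. Once the conventions are fixed, each step is a direct check.
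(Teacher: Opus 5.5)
Your proof is correct. The paper states this lemma without proof, listing it among the basic facts about shuffles, so there is no argument of theirs to compare against. What you give is the standard argument. In (1), the left coset $\gamma(\Sigma_{n-k}\times\Sigma_k)$ is determined by the set $\gamma(\{1,\dots,n-k\})$, and the shuffle is the unique order-preserving member of that coset. In (2), you reduce to (1). Writing $\gamma(\tau,1)=\gamma'(\sigma,\rho)$, the composite $\gamma'\sigma$ must be increasing on $\{1,\dots,\ell\}$, because $\tau$ is increasing there and $\gamma$ is increasing on $\{1,\dots,k\}$. Since $\gamma'$ is increasing on $\{1,\dots,\ell\}$, this forces $\sigma=1$. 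That last implication is the one step worth writing out explicitly rather than citing as ``the criterion in (1).''

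Your argument genuinely uses the hypothesis that $\gamma$ is a $(k,n-k)$-shuffle. For an arbitrary $\gamma\in\Sigma_n$, the $\Sigma_\ell$-component $\sigma$ can be nontrivial. For example, take $n=3$, $k=\ell=2$, $\tau=1$ and $\gamma=(1\ 2)$; then $\gamma'=1$ and $\sigma=(1\ 2)$. So wherever part (2) is invoked for a general $\gamma\in\Sigma_n$, as in Lemma~\ref{lem:define_phi}, you should first use part (1) to replace $\gamma$ by its shuffle representative.
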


We are now ready to define \emph{latched simplices}.

\begin{definition}\label{def:latch}
  Let $X$ be a symmetric spectrum.  Fix $n \geq 1$ and let $0<k<n$. Let
  $\delta\in S^k$ be a simplex and $\tau$ a $(n-k,k)$-shuffle.  We say that a
  simplex \emph{$y\in X_{n-k}$ is $(\tau,\delta)$-latched to $x\in X_n$} if
\[\tau\cdot \sigma_k(y,\delta) = x\]
 for
  $\sigma_k: X_{n-k} \sma S^k \rto X_n$ the structure map.  
  
    When $\tau$ and
  $\delta$ are clear from context (or unimportant) we simply say that \emph{$x$
    is latched to $y$}.
  Further, we say a simplex $x$ is \emph{latched} if there is a tuple $(k,y,\delta,\tau)$ so that $y\in X_k$ 
 is $(\delta,\tau)$-latched to $x$, otherwise we say that $x$ is
 \emph{unlatched}.

   Let $\Lambda_nX$ be the full simplicial subset of latched simplices. %
 \end{definition}

  \begin{notation}
  We will write $\sigma$ instead of $\sigma_k$ for structure
  maps when $k$ is clear from context.  Thus for a simplex $\delta\in S^k$, we
  write $\sigma(x,\delta)$ to mean $\sigma_k(x,\delta)$.
\end{notation} 

\begin{lemma}\label{lem:Lambda_well_defined}
  $\Lambda_nX$ is a well-defined $\Sigma_n$-simplicial subset of $X_n$.
\end{lemma}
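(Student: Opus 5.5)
We need to show two things: the set of latched simplices of $X_n$ is closed under the simplicial operators (faces and degeneracies), and it is closed under the action of $\Sigma_n$. Since $\Lambda_nX$ is defined as the full simplicial subset on the latched simplices, well-definedness amounts exactly to these closure properties. The plan is to take a witness $(k,y,\delta,\tau)$ for a latched simplex $x$ and produce an explicit witness for $\theta^*x$ (with $\theta$ a simplicial operator) and for $\gamma\cdot x$ (with $\gamma\in\Sigma_n$). Throughout, the witness has $0<k<n$, $y\in X_{n-k}$, $\delta\in S^k$ and $\tau$ an $(n-k,k)$-shuffle, with $x=\tau\cdot\sigma(y,\delta)$.

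\textbf{Simplicial closure.} The structure map $\sigma_k\colon X_{n-k}\sma S^k\rto X_n$ and the action of $\tau$ are maps of simplicial sets. Simplicial operators on a smash product act diagonally, so for any $\theta$ we get
\[\theta^*x=\tau\cdot\sigma(\theta^*y,\theta^*\delta).\]
If $\theta^*\delta$ is not the basepoint, then $(k,\theta^*y,\theta^*\delta,\tau)$ is a witness, so $\theta^*x$ is latched. If $\theta^*\delta$ is the basepoint, then $\theta^*x$ is the basepoint (degenerate on it) of $X_n$. In that case I would check that the basepoint is itself latched: it equals $\sigma(*,\delta')$ for any $\delta'$, and it is fixed by every permutation. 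This is the only mild subtlety in this part, and it is handled by always counting the basepoint simplices as latched; for $n\ge2$ a suitable $k$ exists, while for $n=1$ the statement is vacuous or the basepoint is included by convention. Degeneracies need no separate treatment: the same formula covers them, and a degenerate $\delta$ is still a simplex of $S^k$.

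\textbf{$\Sigma_n$-closure.} Let $\gamma\in\Sigma_n$. We have
\[\gamma\cdot x=(\gamma\tau)\cdot\sigma(y,\delta),\]
but $\gamma\tau$ need not be a shuffle. By Lemma~\ref{lem:d_latching}(1), factor $\gamma\tau=\gamma'(\alpha,\beta)$ with $\gamma'$ an $(n-k,k)$-shuffle and $(\alpha,\beta)\in\Sigma_{n-k}\times\Sigma_k$. The key input is the equivariance of the structure maps of a symmetric spectrum:
\[(\alpha,\beta)\cdot\sigma(y,\delta)=\sigma(\alpha y,\beta\delta),\]
where $\Sigma_k$ acts on $S^k=(S^1)^{\sma k}$ by permuting the smash factors. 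Hence
\[\gamma\cdot x=\gamma'\cdot\sigma(\alpha y,\beta\delta),\]
so $(k,\alpha y,\beta\delta,\gamma')$ is a witness and $\gamma\cdot x$ is latched.

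\textbf{Where the difficulty lies.} The main point requiring care is this reshuffling step in the $\Sigma_n$-closure, since it needs the precise equivariance convention for the iterated structure map $\sigma_k$. I would verify that convention by recalling that $\sigma_k$ is $\Sigma_{n-k}\times\Sigma_k$-equivariant, which follows from the definition of symmetric spectra (Hovey--Shipley--Smith, Schwede). The remaining steps are formal, apart from the basepoint bookkeeping noted above.
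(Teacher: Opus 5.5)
Your proof is correct and matches the paper's argument. Simplicial closure holds because $\sigma_k$ and the $\tau$-action are simplicial maps, and $\Sigma_n$-closure follows by factoring $\gamma\tau=\gamma'(\alpha,\beta)$ via Lemma~\ref{lem:d_latching}(1) and using $\Sigma_{n-k}\times\Sigma_k$-equivariance of $\sigma_k$. Your basepoint case split is harmless but unnecessary: Definition~\ref{def:latch} allows $\delta$ to be any simplex of $S^k$, so $(k,\theta^*y,\theta^*\delta,\tau)$ is always a witness.
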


\begin{proof}
By direct computation $\Lambda_nX$ is a well-defined simplicial subset, because
if $x$ is $(\delta,\tau)$-latched to $y\in X_{k-1}$ then $d_ix$ is
$(d_i\delta,\tau)$-latched to $d_iy$ and $s_ix$ is $(s_i\delta,\tau)$-latched to
$s_iy$.  

  Now we show that the $\Sigma_n$-action is well-defined.
    We must show that if $x\in \Lambda_n X$ and $\rho\in \Sigma_n$ then
  $\rho\cdot x \in \Lambda_n X$.  Suppose that $x$ is $(\tau,\delta)$-latched to
  $y$, so that $\tau\cdot \sigma_k(y,\delta) = x$.  Then
  $(\rho\tau) \cdot \sigma_k(y,\delta) = \rho\cdot x$.  Since the shuffles
  represent the left cosets of $\Sigma_{n-k} \times \Sigma_k$ in $\Sigma_n$,
  there exists a unique shuffle $\tau'$ and
  $(\rho_{n-k}, \rho_k)\in\Sigma_{n-k}\times \Sigma_k$ such that
  $\rho\tau = \tau'(\rho_{n-k},\rho_k)$.  Then
  \[\rho\cdot x = \tau' \sigma_k(\rho_{n-k}y, \rho_k\delta).\]
  Thus $\rho x$ is $(\tau',\rho_k\delta)$-latched to $\rho_{n-k} y$, and so
  $\rho x\in \Lambda_n X$, as desired.
\end{proof}

The point of $\Lambda_n$ is that it is a more hands-on definition of the image
of the traditional latching object.

\begin{lemma} \label{lem:nuL=lambda}
  For any spectrum $X$, $\nu^T(L^T_nX) = \Lambda_nX$.
\end{lemma}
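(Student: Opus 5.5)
We need to show $\nu^T(L^T_nX)=\Lambda_nX$. The plan is to describe $L^T_nX=(X\sma\redS)_n$ explicitly as a quotient of smash products and then read off its image under $\nu^T_n$.

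\textbf{Step 1: an explicit presentation of the latching space.} Recall that the smash product of symmetric spectra is computed levelwise as a coequalizer:
\[
(X\sma\redS)_n=\coeq\Big(\bigvee_{p+q+r=n}\Sigma_n^+\sma_{\Sigma_p\times\Sigma_q\times\Sigma_r}X_p\sma S^q\sma \redS_r\ \rightrightarrows\ \bigvee_{p+m=n}\Sigma_n^+\sma_{\Sigma_p\times\Sigma_m}X_p\sma\redS_m\Big).
\]
Since $\redS_0=*$, only the summands with $m\geq1$ survive, so $p\le n-1$. In particular, $L^T_nX$ is a quotient of
\[
\bigvee_{0<k\le n}\Sigma_n^+\sma_{\Sigma_{n-k}\times\Sigma_k}X_{n-k}\sma S^k .
\]
The map $\nu^T_n$ is induced by $\redS\to\mathbb{S}$ followed by $X\sma\mathbb{S}\cong X$. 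On the summand indexed by $k$ it is therefore $[\gamma,y,\delta]\mapsto\gamma\cdot\sigma_k(y,\delta)$.

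\textbf{Step 2: the inclusion $\nu^T(L^T_nX)\subseteq\Lambda_nX$.} Because the quotient map is surjective, every simplex of $L^T_nX$ is represented by some $[\gamma,y,\delta]$ with $0<k\le n$. By Lemma~\ref{lem:d_latching}(1), write $\gamma=\tau(\alpha,\beta)$ with $\tau$ an $(n-k,k)$-shuffle. Equivariance of the structure maps gives
\[
\gamma\cdot\sigma_k(y,\delta)=\tau\cdot\sigma_k(\alpha y,\beta\delta).
\]
Hence the image is $(\tau,\beta\delta)$-latched to $\alpha y$, so it lies in $\Lambda_nX$.

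\textbf{Step 3: the reverse inclusion $\Lambda_nX\subseteq\nu^T(L^T_nX)$.} If $x$ is $(\tau,\delta)$-latched to $y\in X_{n-k}$, then $x=\nu^T_n[\tau,y,\delta]$, where $[\tau,y,\delta]$ is the element of the $k$-th summand. This inclusion is immediate.

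\textbf{The main obstacle: the index range.} The definition of latching requires $0<k<n$, whereas the presentation in Step 1 includes the summand $k=n$, namely $X_0\sma S^n$. I would handle this by checking whether such simplices are latched anyway.
\begin{itemize}
\item For $n\ge2$, use associativity of the structure maps: $\sigma_n(y,\delta)$ for $y\in X_0$ factors through $\sigma_1\colon X_{n-1}\sma S^1\to X_n$ whenever $\delta$ decomposes as a smash $\delta'\sma\delta''\in S^{n-1}\sma S^1$. Since $S^n=(S^1)^{\sma n}$ levelwise, every simplex of $S^n$ is such a smash, so the image is latched with $k=1$.
\item For $n=1$ this factorization argument is unavailable. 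There the definition's condition $0<k<n$ appears to exclude the case entirely, so I expect the intended reading is $0<k\le n$. I would verify that the definition admits $k=n$, or adjust the definition accordingly.
\end{itemize}
The check that everything is compatible with the simplicial structure is routine, because all the maps involved are simplicial.
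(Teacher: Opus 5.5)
Your proposal is correct and follows the same two-inclusion argument as the paper: you pick representatives $[\gamma,y,\delta]$ of $L^T_nX$ and use the formula $\nu^T_n[\gamma,y,\delta]=\gamma\cdot\sigma_k(y,\delta)$. It is more careful than the paper's proof in two ways: you reduce an arbitrary $\gamma$ to a shuffle via Lemma~\ref{lem:d_latching}(1), and you correctly flag that the summand $X_0\wedge S^n$ requires reading Definition~\ref{def:latch} with $0<k\le n$. That reading is harmless for $n\ge 2$ by your factorization through $\sigma_1$, and genuinely necessary for $n=1$, where the literal definition would make $\Lambda_1X$ empty; it is also how the paper implicitly uses the definition elsewhere, e.g.\ in defining $\LP_nX$.
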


\begin{proof}
  Suppose that $x\in \Lambda_nX$.  Then there exists an $n-1$-simplex $y$ such
  that $x$ is $(\delta,\tau)$-latched to $y$.  But then $x$ is in the image of
  $(\tau,y,\delta)\in L^T_nX$, and thus is in the image of $L^T_nX$; thus
  $\Lambda_nX \subseteq \nu^T(L^T_nX)$.  On the other
  hand, consider a simplex represented by $(\tau,y,\delta)\in L^T_nX$. Then
  $\nu^T(\tau,y,\delta) = \tau\cdot \sigma(y,\delta)$, and thus its image in
  $X_n$ is in $\Lambda_n X$.   Thus $\nu^T(L^T_nX) \subseteq \Lambda_nX$, and
  the two are equal, as desired.
\end{proof}

\begin{corollary}\label{cor:incflatmor} Suppose that $Y$ is
  flat.  Then a level cofibration $f:X \rto Y$ is a flat cofibration if and only
  if for all $n$,
\[f_n(X_n)\cap \Lambda_n Y\subseteq f_n(\Lambda_n X).\]
\end{corollary}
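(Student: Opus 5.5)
Since cofibrations of simplicial sets are monomorphisms (Remark~\ref{rmk:characterize_cofib}), the statement reduces to deciding when the map
\[\phi_n\colon L_n^TY\cup_{L_n^TX}X_n\rto Y_n\]
is injective. I would not compute in the pushout directly. Instead I would use two facts. First, $Y$ flat means $\nu_n^T\colon L_n^TY\rto Y_n$ is injective, so $L_n^TY\cong\Lambda_nY$ by Lemma~\ref{lem:nuL=lambda}. Second, $f$ is a level cofibration, so $f_n\colon X_n\rto Y_n$ is injective.

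With these, the square
\[\begin{tikzcd} L_n^TX \arrow[r,"L_n^Tf"]\arrow[d,"\nu_n^T"'] & L_n^TY\cong \Lambda_nY \arrow[d,hook] \\ X_n \arrow[r,hook,"f_n"] & Y_n\end{tikzcd}\]
has both the right vertical map and the bottom horizontal map injective. The plan is therefore to determine when the induced map from the pushout of such a square into $Y_n$ is injective.

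Injectivity is tested levelwise on simplices, so I work with sets. The pushout is the quotient of $\Lambda_nY\sqcup X_n$ by the relation $L_n^Tf(z)\sim\nu_n^T(z)$ for $z\in L_n^TX$. The image of $L_n^TX$ in $\Lambda_nY$ is exactly $f_n(\Lambda_nX)$, because
\[L_n^Tf(z)=\nu^T_n\bigl(L_n^Tf(z)\bigr)=f_n\bigl(\nu_n^T z\bigr)\]
by naturality of $\nu^T$ and the identification $L_n^TY\cong\Lambda_nY$. Similarly, the image of $L_n^TX$ in $X_n$ is $\Lambda_nX$, again by Lemma~\ref{lem:nuL=lambda}. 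Since $f_n$ and $\Lambda_nY\hookrightarrow Y_n$ are injective, two elements of the pushout can only have the same image in $Y_n$ in one situation: an element $x\in X_n$ and an element $w\in\Lambda_nY$ with $f_n(x)=w$.

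Such a pair is identified in the pushout exactly when $x$ and $w$ are connected through some $z\in L_n^TX$. This requires $\nu_n^T(z)=x$, so $x\in\Lambda_nX$. Conversely, if $x\in\Lambda_nX$, choose $z$ with $\nu_n^T z=x$. Then $L_n^Tf(z)$ corresponds to $f_n(x)=w$, so $x$ and $w$ are identified. Finally, the equivalence relation generated through zigzags adds nothing new here. Within $X_n$ the relation identifies nothing, because the $\Lambda_nY$ side is injective into $Y_n$ and $f_n$ is injective.

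This gives the claim: $\phi_n$ is injective if and only if every $x$ with $f_n(x)\in\Lambda_nY$ lies in $\Lambda_nX$, i.e.
\[f_n(X_n)\cap\Lambda_nY\subseteq f_n(\Lambda_nX).\]

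The main obstacle is the careful handling of the pushout's equivalence relation, namely checking that the zigzags produce no extra identifications. Injectivity of $f_n$ and of $\Lambda_nY\hookrightarrow Y_n$ should make this routine. I would also remark that everything is compatible with the simplicial structure and the $\Sigma_n$-action, using Lemma~\ref{lem:Lambda_well_defined}. That compatibility is not needed for the monomorphism criterion itself, since the condition is checked simplexwise.
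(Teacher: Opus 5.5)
Your proposal is correct and follows essentially the paper's route. Both arguments use that $f_n$ and $\nu^T_n\colon L^T_nY\rto Y_n$ are monic, reduce monicity of the pushout map to the condition $f_n(X_n)\cap \nu^T_n(L^T_nY)\subseteq f_n(\nu^T_n(L^T_nX))$, and then translate this via Lemma~\ref{lem:nuL=lambda}; the only difference is that you verify the pushout-monicity criterion by hand (including the check that zigzags add no identifications), which the paper simply states.
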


\label{page_ref:incflatmor}
\begin{proof}
  Our goal is to show that $f$ is a flat cofibration---in other words, that the
  induced map
  \[\nu_n: L^T_n Y \cup_{L^T_n X} X_n \rto Y_n\]
  is a cofibration (i.e. is monic)---if and only if $f_n(X_n) \cap \Lambda_nY
  \subseteq f_n(\Lambda_n X)$.
  
  First note that for a pushout diagram of simplicial sets
  \begin{diagram}
    { C & B \\ A & A \cup_C B \\ & & D \\};
    \to{1-1}{1-2}^\ell \to{1-1}{2-1}_l \to{1-2}{2-2} \to{2-1}{2-2}
    \diagArrow{->,bend left}{1-2}{3-3}^g \diagArrow{->, bend right}{2-1}{3-3}_f
    \to{2-2}{3-3}!h
  \end{diagram}
  if $f$ and $g$ are monic then $h$ is monic if and only if $f(A)\cap g(B)\subset 
  f(k(C))=g(\ell(C))$.  Abusing notation we write this as $h(A)\cap h(B)\subset
  h(C)$.

  In our case, this is the diagram:
  \begin{diagram}
    { L^T_nX & L^T_nY \\ X_n & L^T_nY\cup_{L^T_nX} X_n \\ & & Y_n \\};
    \to{1-1}{1-2} \to{1-1}{2-1}_{\nu^T_n} \to{1-2}{2-2} \to{2-1}{2-2}
    \diagArrow{->,bend left}{1-2}{3-3}^{\nu^T_n} \diagArrow{->, bend right}{2-1}{3-3}_{f_n}
    \to{2-2}{3-3}!{\nu}
  \end{diagram}
  The map $f_n$ is monic by assumption, since $f$ is a level cofibration.  The
  map $\nu_n^T$ for $Y$ is monic, since $Y$ is flat.  Thus $\nu$ is monic if and
  only if
  \[f_n(X_n) \cap \nu_n^T(L_n^T Y) \subseteq f_n(\nu_n^T(L_n^T X)).\] But by
  Lemma~\ref{lem:nuL=lambda}, for any spectrum $Z$,
  $\nu_n^T(L_n^T(Z)) = \Lambda_nZ$.  Thus $\nu$ is monic if and only if
  \[f_n(X_n) \cap \Lambda_nY \subseteq f_n(\Lambda_n X),\]
  as desired.
\end{proof}

In many cases of interest (in particular, the $K$-theoretic examples we are
working with), the structure maps in $X$ are all monic, with symmetric actions
being generated somewhat ``freely,'' by which we mean that structure maps are
monic (and thus do not lose information) and the symmetric group action does not
``repeat information''.  We encode this in the definition of an \emph{increasing
  spectrum}.

\begin{definition} \label{def:increasing}
  A symmetric spectrum  is \emph{increasing} if for all $n \geq 1$, 
  \begin{enumerate}
  \item \label{it:structure->monic} the map $\sigma: X_{n-1}\sma S^1 \rto X_n$ is monic, and
  \item \label{it:latched} for any $\rho\in \Sigma_n$ such that $\rho(n) \neq
    n$, and every $y_1,y_2\in X_{n-1}\sma S^1$, if $\sigma(y_1) = \rho\cdot
    \sigma(y_2)$ then there exists a simplex $z\in X_{n-2}$ such that both $y_1$
    and $y_2$ are latched to $z$.
  \end{enumerate}
\end{definition}

Then the main technical result of this paper is the following: 

\begin{theorem} \label{thm:flat<->inc}
  A spectrum is flat if and only if it is increasing.
\end{theorem}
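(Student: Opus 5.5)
The proof rests on Lemma~\ref{lem:nuL=lambda}. Flatness asks that $\nu_n^T\colon L_n^TX\rto X_n$ be monic. Its image is $\Lambda_nX$, so flatness says exactly that distinct points of $L_n^TX$ never give the same latched simplex. I will therefore first describe $L_n^TX=(X\sma\redS)_n$ concretely as a set of simplices modulo an explicit relation, and then compare that relation with conditions (\ref{it:structure->monic}) and (\ref{it:latched}) of Definition~\ref{def:increasing}.

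\textbf{Step 1: a presentation of $L_n^TX$.} A simplex of $L_n^TX$ is a class $[\tau,y,\delta]$ with $0<k\le n$, $y\in X_{n-k}$, $\delta\in S^k$, and $\tau$ an $(n-k,k)$-shuffle. The representative is unique by Lemma~\ref{lem:d_latching}(1), once $\Sigma_{n-k}\times\Sigma_k$ has been absorbed into $y$ and $\delta$. The relations identify $[\tau,\sigma(y',\delta'),\delta]$ with $[\tau',y',\delta'\sma\delta]$, where $\tau'$ and the accompanying permutation are supplied by Lemma~\ref{lem:d_latching}(2). Writing $\delta=\delta_1\sma\delta_2$ with $\delta_1\in S^{k-1}$, every class has a representative with $k=1$. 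So $L_n^TX$ is a quotient of $\Sigma_n^+\sma_{\Sigma_{n-1}}(X_{n-1}\sma S^1)$, and I will show that the identifications are generated by a single move through $X_{n-2}\sma S^2$. Concretely, $[\tau_1,y_1]=[\tau_2,y_2]$ if and only if either
\begin{itemize}
\item $\tau_1=\tau_2$ and $y_1=y_2$, or
\item there is $z\in X_{n-2}$ to which both $y_1$ and $y_2$ are latched, with the shuffles related by the transposition-type permutation moving the last coordinate.
\end{itemize}
Proving that this one-step relation is already an equivalence relation means composing two such moves. That composition is handled by applying Lemma~\ref{lem:d_latching}(2) twice to rewrite a zigzag through two simplices of $X_{n-2}$ as a single simplex of $X_{n-3}$, to which all three are latched.

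\textbf{Step 2: increasing $\Rightarrow$ flat.} Suppose $\nu(\tau_1,y_1)=\nu(\tau_2,y_2)$, that is, $\tau_1\sigma(y_1)=\tau_2\sigma(y_2)$, and set $\rho=\tau_1^{-1}\tau_2$.
\begin{itemize}
\item If $\rho(n)=n$, then $\rho\in\Sigma_{n-1}$. Both shuffles then lie in the same coset, so $\tau_1=\tau_2$, and $\sigma(y_1)=\sigma(\rho y_2)$ gives $y_1=\rho y_2$ by condition (\ref{it:structure->monic}). This is the same point of $L_n^TX$.
\item If $\rho(n)\ne n$, condition (\ref{it:latched}) produces $z\in X_{n-2}$ to which both $y_1$ and $y_2$ are latched. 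By Step 1 the two classes coincide.
\end{itemize}
Hence $\nu_n^T$ is injective, and by Remark~\ref{rmk:characterize_cofib} it is a cofibration.

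\textbf{Step 3: flat $\Rightarrow$ increasing.} For condition (\ref{it:latched}), equality $\sigma(y_1)=\rho\sigma(y_2)$ together with injectivity of $\nu_n^T$ forces $[1,y_1]=[\rho',y_2]$ in $L_n^TX$. Since $\rho(n)\neq n$, the two shuffles differ, so Step 1 supplies the required $z$. For condition (\ref{it:structure->monic}), if $\sigma(y_1)=\sigma(y_2)$ then flatness gives $[1,y_1]=[1,y_2]$. I then argue by induction on $n$, using flatness at level $n-1$ (which makes $\sigma\colon X_{n-2}\sma S^1\rto X_{n-1}$ monic) to show that an identification within the identity coset via some $z$ already forces $y_1=y_2$. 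This amounts to showing that $X_{n-1}\sma S^1\rto L_n^TX$ is injective when the lower structure maps are monic.

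\textbf{Main obstacle.} The main obstacle is Step 1. The coequalizer defining $(X\sma\redS)_n$ a priori allows arbitrarily long zigzags through many levels, and I need to show these collapse to the one-step relation. I will first attempt this by induction on the length of the zigzag and the depth $k$, using the shuffle calculus of Lemma~\ref{lem:d_latching}. If that becomes unwieldy, my fallback is to prove the needed combinatorial facts about shuffles and latching as separate lemmas.
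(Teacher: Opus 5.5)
\paragraph{The gap is Step~1.} You flag Step~1 yourself, and it is not a technicality. As stated it is false for general $X$, and even for increasing $X$ the way Step~2 invokes it does not work.

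Condition (2) only gives you \emph{some} $z\in X_{n-2}$ to which $y_1$ and $y_2$ are latched. It says nothing about whether the latching data through that $z$ are compatible with $\tau_1,\tau_2$, so ``by Step~1 the two classes coincide'' does not follow. Take $X=\mathbb{S}$, $n=3$, and distinct non-basepoint simplices $a,b$ of $S^1$. Then $a\wedge b\wedge a=\sigma(a\wedge b,a)=\gamma\cdot\sigma(b\wedge a,a)$, where $\gamma$ is the $(2,1)$-shuffle with $\gamma(3)=1$. Both $a\wedge b=\sigma(a,b)$ and $b\wedge a=(1\,2)\cdot\sigma(a,b)$ are latched to $z=a$. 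Pushing the two representatives through $z=a$ gives $[1,a,b\wedge a]$ and $[(1\,3),a,b\wedge a]$. These are distinct in $P_{3,\mathbb{S},\redS}(1,2)$, since $(1\,3)\notin\Sigma_1\times\Sigma_2$; only the other choice $z=b$ gives a single move.

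In general you must keep descending, and proving that the descent is consistent and terminates is not shuffle combinatorics. Extracting a common simplex of $X_{n-3}$ from two simplices of $X_{n-2}$ latched from the same $y$ needs condition (2) one level down, iterated. This is exactly what the paper's root machinery supplies:
\begin{enumerate}
\item Lemma~\ref{lem:pre:rootunique} (common lower bounds in $\mathcal{P}_x$) gives a unique root.
\item Lemma~\ref{lem:latchingunique} gives unique latching data to that root.
\item Proposition~\ref{prop:latch=latch} identifies $L^T_nX$ with the space $L^C_nX$ of (shuffle, unlatched simplex, sphere coordinate) triples, on which injectivity of $\nu^C_n$ is immediate.
\end{enumerate}

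\paragraph{Step~3 has the same dependence, plus a further problem.} The ``only if'' half of Step~1 fails for general $X$ (example below). So for flat $X$ it would need an induction on $n$ that already has condition (2) at lower levels, which you have not set up. The paper avoids this by arguing directly: two distinct elements of $P_{n,X,\redS}(n-1,1)$ that are identified in the colimit must both come from $P_{n,X,\redS}(n-2,1)$.

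For condition (1), reducing to injectivity of $X_{n-1}\wedge S^1\to L^T_nX$ is correct. However, that injectivity does not follow from monicity of the lower structure maps. Here is a counterexample at $n=3$:
\begin{enumerate}
\item Let $X_0=*$ and $X_1=S^1_p\vee S^1_q$, writing $p_a,q_a$ for a simplex $a$ of $S^1$ in the two summands.
\item Let $X_2$ be $\Sigma_2^+\wedge X_1\wedge S^1$ modulo $[g,p_a,b]\sim[g(1\,2),q_b,a]$, with $\sigma_1=[1,-,-]$.
\item Let $X_n=*$ for $n\geq 3$.
\end{enumerate}
Then $\sigma_0$ and $\sigma_1$ are monic. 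Yet three moves through $P_{3,X,\redS}(1,2)$, with shuffles $(2\,3)$, $(1\,3)$, $(1\,2)$, identify $[1,\sigma(p_a,e),\delta]$ with $[1,\sigma(q_a,e),\delta]$ in $L^T_3X$. For $a\neq e$, no simplex of $X_1$ is latched from both, so this example also refutes Step~1 for general $X$. You would need full flatness at the lower levels here, or, as the paper does, Schwede's result that $\lambda_X\colon S^1\wedge X\to \mathrm{sh}\,X$ is a flat, hence level, cofibration.
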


\begin{proof}
  The forwards
  implication is Proposition~\ref{prop:flat->inc}.
  The backwards implication is Proposition~\ref{prop:inc->flat}.  
  \end{proof}

We now consider projective spectra. 

\begin{theorem}[See Page~\pageref{proof:characterize_flat_projective_spectra}] \label{thm:characterize_flat_projective_spectra}
  Let $X$ be a flat spectrum.  Then $X$ is projective if and only if  $X_n\setminus 
\Lambda_nX
$ is $\Sigma_n$-free for all $n$. 
\end{theorem}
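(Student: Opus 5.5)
Since $X$ is flat, each $\nu^T_n\colon L^T_nX \rto X_n$ is a cofibration, i.e.\ a monomorphism by Remark~\ref{rmk:characterize_cofib}. So the only thing to check for projectivity is freeness. By Definition~\ref{def:flat}, $X$ is projective exactly when each $\nu^T_n$ is a free $\Sigma_n$-cofibration. By Remark~\ref{rmk:characterize_cofib}, a monomorphism is a free $\Sigma_n$-cofibration if and only if $\Sigma_n$ acts freely on the complement of its image. By Lemma~\ref{lem:nuL=lambda}, the image of $\nu^T_n$ is exactly $\Lambda_nX$. Hence $\nu^T_n$ is a free $\Sigma_n$-cofibration if and only if $\Sigma_n$ acts freely on $X_n\setminus\Lambda_nX$.

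The plan is therefore to chain these three earlier facts, level by level. For each $n\ge 0$:
\begin{enumerate}
\item Flatness makes $\nu^T_n$ monic, so $L^T_nX$ may be identified with its image.
\item Lemma~\ref{lem:nuL=lambda} identifies that image with $\Lambda_nX$. Lemma~\ref{lem:Lambda_well_defined} shows $\Lambda_nX$ is a $\Sigma_n$-simplicial subset, so its complement $X_n\setminus\Lambda_nX$ is $\Sigma_n$-stable and "freeness on the complement" makes sense.
\item The Goerss--Jardine characterization quoted in Remark~\ref{rmk:characterize_cofib} then says $\nu^T_n$ is a free $\Sigma_n$-cofibration if and only if the nondegenerate simplices of $X_n$ outside $\Lambda_nX$ have trivial stabilizers.
\end{enumerate}
Both directions of the theorem follow at once from this level-wise equivalence.

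I expect the only subtle point to be step 3: what "$\Sigma_n$-free" means for a set of simplices that is not itself a simplicial set. The complement of a simplicial subset is closed under degeneracies but not under faces. I would interpret freeness simplex-wise, meaning every simplex of $X_n\setminus\Lambda_nX$ has trivial isotropy. This is what the Goerss--Jardine criterion asks for, namely that $X_n$ be obtained from the image by attaching free $\Sigma_n$-cells.

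I would also check that requiring this only on nondegenerate simplices is equivalent to requiring it on all simplices of the complement. The key observation is that the degenerate simplices of the complement are degeneracies $s x$ of nondegenerate simplices $x$ that also lie in the complement. This holds because $\Lambda_nX$ is closed under faces and $d s x = x$. Degeneracy maps are injective and $\Sigma_n$-equivariant, so the stabilizer of $s x$ equals the stabilizer of $x$. With this bookkeeping in place, the argument is a direct assembly of the cited results.
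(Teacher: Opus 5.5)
Your proof is correct, and it reaches the result more directly than the paper does.

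\textbf{The paper's route.} It first uses Proposition~\ref{prop:flat->inc} to know that $X$ is increasing. It then uses the isomorphism $\mu_n\colon L^T_nX\cong \LP_nX$ of Proposition~\ref{prop:latch=latch}, together with the commuting triangle of Proposition~\ref{prop:latch_maps_commute}, to replace $\nu^T_n$ by $\nu^C_n$. The image of $\nu^C_n$ is visibly the set of latched simplices, and the paper concludes with Remark~\ref{rmk:characterize_cofib}.

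\textbf{Your route.} You identify the image of $\nu^T_n$ with $\Lambda_nX$ directly via Lemma~\ref{lem:nuL=lambda}, which holds for every spectrum. So your argument never needs increasing spectra, roots, or the combinatorial latching space. Nothing is lost by skipping them: the criterion of Remark~\ref{rmk:characterize_cofib} depends only on injectivity and on the image, so the explicit description of $L^T_nX$ that the paper's detour provides is not used for this statement. Your route also removes any dependence on Proposition~\ref{prop:latch=latch}. That proposition is the most delicate input from Section~\ref{sec:tech:inc->flat}, and its written proof is incomplete.

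\textbf{A small slip.} In your bookkeeping paragraph you want to show that if a degenerate simplex $sx$ lies outside $\Lambda_nX$, then $x$ also lies outside $\Lambda_nX$. That step uses closure of $\Lambda_nX$ under degeneracies, not faces; faces together with $dsx=x$ give the converse. Both closure properties hold by Lemma~\ref{lem:Lambda_well_defined}, so nothing breaks.
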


The corresponding characterization of morphisms is the following:
\begin{corollary}[See Page~\pageref{page_ref:flatY->proj}] \label{cor:flatY->proj}
  Suppose that $Y$ is projective.  Then a flat cofibration $f:X \rto Y$ is a
  projective cofibration.
\end{corollary}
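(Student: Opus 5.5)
We need to show that for each $n$ the map
\[\nu \colon L^T_nY\cup_{L^T_nX}X_n \rto Y_n\]
is a free $\Sigma_n$-cofibration. Since $f$ is a flat cofibration, $\nu$ is already a monomorphism. By Remark~\ref{rmk:characterize_cofib}, it therefore suffices to check that $\Sigma_n$ acts freely on the simplices of $Y_n$ that are not in the image of $\nu$.

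The first step is to identify that image. By Lemma~\ref{lem:nuL=lambda}, the image of $L^T_nY$ in $Y_n$ is $\Lambda_nY$. The image of $X_n$ is $f_n(X_n)$. Hence the image of $\nu$ is
\[\Lambda_nY\cup f_n(X_n),\]
and the complement is
\[Y_n\setminus(\Lambda_nY\cup f_n(X_n)).\]
Both pieces of the image are $\Sigma_n$-invariant: $\Lambda_nY$ by Lemma~\ref{lem:Lambda_well_defined}, and $f_n(X_n)$ because $f_n$ is equivariant. So the complement is a $\Sigma_n$-invariant set of simplices.

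The second step uses the hypothesis on $Y$. Since $Y$ is projective, it is in particular flat (every free $\Sigma_n$-cofibration is a cofibration). Theorem~\ref{thm:characterize_flat_projective_spectra} then says that $Y_n\setminus\Lambda_nY$ is $\Sigma_n$-free, meaning every simplex there has trivial stabilizer. The complement of the image of $\nu$ is contained in $Y_n\setminus\Lambda_nY$, and it is $\Sigma_n$-invariant. A $\Sigma_n$-invariant subset of a free set is free, so the action on the complement is free. This proves that $f$ is a projective cofibration.

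I expect no real obstacle. The only points needing care are the two identifications: that the image of the pushout inside $Y_n$ is exactly $\Lambda_nY\cup f_n(X_n)$, and that "free on the complement of the image" is the correct reading of a $\Sigma_n$-free cofibration when the map is a monomorphism. Both follow directly from Remark~\ref{rmk:characterize_cofib} together with Lemma~\ref{lem:nuL=lambda}.
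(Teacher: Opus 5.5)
Your proposal is correct and follows essentially the same argument as the paper: flatness of $f$ makes the relative latching map injective, its image is $\Lambda_nY\cup f_n(X_n)$, and freeness of $\Sigma_n$ on $Y_n\setminus\Lambda_nY$ (from Theorem~\ref{thm:characterize_flat_projective_spectra}) restricts to the invariant subset $Y_n\setminus(\Lambda_nY\cup f_n(X_n))$. You are slightly more careful than the paper in two places: you note that a projective $Y$ is flat, which is needed to invoke that theorem, and you check that the complement of the image is $\Sigma_n$-invariant.
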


\section{Cofibrant $K$-theory}\label{sec:cofib_K_theory}

The goal of this section is to give a construction of $K(\C)$ for a Waldhausen
category $\C$ which will give a symmetric spectrum which is cofibrant in the
projective stable model structure on symmetric spectra. 

In this section, $\C$ will always be a Waldhausen category.  We will refer to
the ``all-at-once'' $K$-theory construction of \cite[Construction
2.2]{blumbergmandell11} as the \emph{usual $S_\dotp$-construction}.  This
construction produces a symmetric spectrum.  We use the notation of
\cite[Construction 2.2]{blumbergmandell11}, except that we simplify their
subscripts $n_1,\ldots,n_q$ as $\vec n$.  In order for this construction to be
well-defined as a symmetric spectrum it is necessary to have a distinguished
zero object, and thus many people assume that there is a unique zero object in a
Waldhausen category.  We instead make the assumption that $\C$ is
\emph{pointed}: that there is a \emph{chosen} zero object that is distinguished,
but that other zero objects may exist.

\begin{definition}
  A diagram $X:\Ar[n] \rto \C$ is \emph{single-object} if there exists an object
  $A\in \C$ such that for all objects $x\in
  \Ar[n]$, if $X(x) \neq 0$ then $X(x) = A$, and for all morphisms $\alpha\in
  \Ar[n]$, if $X(\alpha)$ is not an identity morphism then it is equal to the
  morphism $0 \rcofib A$.

  A diagram $A$ in $S^{(q)}_{n_1,\ldots,n_q}\C$ is \emph{$i$-trivial} (for
  $1 \leq i \leq q$) if the diagram $A$ is single-object when
  considered as a functor \[\Ar[n_i] \rto
  S^{(q-1)}_{n_1,\ldots,\widehat{n_i},\ldots,n_q}\C.\]
  \end{definition}

\begin{example}\label{ex:i_trivial}
 Consider, the
following $2$-simplex in $S_\dotp S_\dotp \C$ associated to a non-isomorphism
cofibration $A \rcofib B$ (where $A \neq 0$):
\begin{diagram}
  { A & A \\ A & B \\};
  \cofib{1-1}{1-2} \cofib{2-1}{2-2}
  \cofib{1-1}{2-1} \cofib{1-2}{2-2}
\end{diagram}
This  is not
$1$-trivial or $2$-trivial. If $A=0$ then this would be both $1$-trivial and
$2$-trivial.  The diagram
\begin{diagram}
  { A & B \\ A & B \\};
  \cofib{1-1}{1-2} \cofib{2-1}{2-2} \eq{1-1}{2-1} \eq{1-2}{2-2}
\end{diagram}
is $2$-trivial but not $1$-trivial.
\end{example}

In the context of Waldhausen's $S_\dotp$-construction, $\Lambda_n K(\C)$ can be
described in a straightforward fashion.
 
\begin{lemma} \label{lem:inLambda}
  For all $n \geq 0$, $\Lambda_n K(\C) \subseteq K(\C)_n$ is the subspace of
  those diagrams $A$ for which there exists some $1\leq i \leq n$ so that $A$ is
  $i$-trivial. 
\end{lemma}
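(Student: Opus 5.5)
Both inclusions come from unwinding the structure maps and the $\Sigma_n$-action of the usual $S_\dotp$-construction. Recall that $K(\C)_n$ is the diagonal of the $n$-fold multisimplicial set $N w S^{(n)}_{\vec n}\C$. The structure map $\sigma_k: K(\C)_{n-k}\sma S^k \rto K(\C)_n$ sends a pair $(y,\delta)$ to a diagram in the last $k$ simplicial directions. Here $\delta\in S^k = (\Delta^1/\partial\Delta^1)^{\sma k}$ is determined in each of the last $k$ coordinates by a map $[m]\rto[1]$, and we use the inclusion $S^1 \rto K(\mathrm{Finite\ sets})_1$, i.e. the fact that $S^1$ sits in $S_\dotp$ of the trivial category. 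Concretely, $\sigma_k(y,\delta)$ is the diagram which, in each of the last $k$ directions, is obtained from $y$ by pulling back along the functor $\Ar[m_j]\rto\Ar[1]$ determined by $\delta$. Each of those directions is therefore either constant at $0$ (when $\delta$ is the basepoint in that coordinate) or of the form $0\rcofib y\rcofib\cdots$ with the single object $y$ and identity maps. The $\Sigma_n$-action permutes the simplicial directions. So the first step is to record the explicit formula: $\sigma_k(y,\delta)$ is $j$-trivial for every $j>n-k$. In particular, when $k\geq 1$ it is $n$-trivial.

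For $\Lambda_nK(\C)\subseteq\{\text{$i$-trivial for some } i\}$, take $x=\tau\cdot\sigma_k(y,\delta)$ with $0<k<n$. By the first step, $\sigma_k(y,\delta)$ is $n$-trivial. Permuting coordinates by $\tau$ carries $j$-triviality to $\tau(j)$-triviality, so $x$ is $\tau(n)$-trivial.

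For the reverse inclusion, let $A$ be $i$-trivial. Using the $\Sigma_n$-invariance of $\Lambda_n$ (Lemma~\ref{lem:Lambda_well_defined}) and the fact that triviality transforms as above, we may apply a permutation and assume $i=n$. Regard $A$ as a functor $\Ar[m]\rto S^{(n-1)}_{\vec m}\C$ which is single-object with some object $B$. Then $B$ is an $m$-simplex $y$ of $K(\C)_{n-1}$, using the diagonal simplicial structure and the weak-equivalence nerve direction. The pattern of which $(p\leq q)\in\Ar[m]$ have $A(p\leq q)=B$ determines a map $[m]\rto[1]$, i.e. an $m$-simplex $\delta$ of $S^1$. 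The choice is $\delta(j)=1$ exactly for those $j$ with $A(0\leq j)=0$ and $A(j\leq m)=B$. If no such pattern fits, then $A$ is the constant zero diagram, which is the basepoint and is latched trivially, e.g. to the basepoint of $X_{n-1}$. We then check that $A=\sigma_1(y,\delta)$, so $A$ is $(\mathrm{id},\delta)$-latched to $y$.

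The main obstacle is this last check. We must show that a single-object diagram in the Waldhausen $S_\dotp$ sense is forced to have exactly the shape $A(p\leq q)=B$ iff $p<j_0\leq q$ for a single threshold $j_0$. This is what identifies it with a simplex of $S^1=\Delta^1/\partial\Delta^1$. The argument will use the $S_\dotp$ axioms: $A(p\leq p)=0$, and each $A(p\leq q)\rcofib A(p\leq r)\rto A(q\leq r)$ is a cofiber sequence. Together with the single-object condition that every non-identity map is $0\rcofib B$, these axioms rule out patterns with two "jumps," since they would force $B\rcofib B\rto B$ to be a cofiber sequence with $B\neq 0$ and the map $B\rto B$ the identity, contradicting cofiber $0$. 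The subtle point is that $\C$ is only pointed, so other zero objects exist. I expect to handle this by noting that the single-object condition is stated with respect to the chosen $0$, which makes the dichotomy between "$=0$" and "$=B$" literal. We must also check compatibility with the weak-equivalence direction of the nerve, which is routine because $B$ is constant there.
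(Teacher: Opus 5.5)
Your forward inclusion is exactly the paper's argument: a latched simplex $\tau\cdot\sigma_k(y,\delta)$ is $\tau(n)$-trivial, because $\sigma_k$ produces diagrams that are trivial in the last $k$ directions and $\Sigma_n$ permutes directions. That inclusion is the only one the paper's proof addresses.

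The reverse inclusion you add has a genuine gap. Your key claim is that a single-object $S_\dotp$-diagram must be supported exactly on $\{(p\le q): p<j_0\le q\}$. This is false in the generality of the paper, for two reasons.

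(1) Your first step requires reading ``single-object'' as permitting maps $B\to 0$, since the image of $\sigma_1$ contains $A(p\le q)=B\to A(q\le q)=0$. So the literal ``every non-identity map is $0\rcofib B$'' that you invoke later is not available to you. Once maps $B\to 0$ are allowed, suppose $B\rcofib 0$ is a cofibration with $B\not\cong 0$; the paper explicitly contemplates this in the Remark after Proposition~\ref{prop:Walequiv}. Then $A(0\le 1)=B$, $A(0\le 2)=A(1\le 2)=0$ is an object of $S_2$: the pushout of $0\leftarrow B\rcofib 0$ is $0$, so all axioms hold. Yet its support is not a threshold. Your ``two jumps'' argument only excludes $B\rcofib B\twoheadrightarrow B$; it never excludes a drop from $B$ to $0$ along a cofibration.

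(2) Your contradiction ``the cofiber would be $0$'' needs $B\not\cong 0$. If $B=0'$ is an undistinguished zero object (or a diagram of them), every commutative square of such objects is a pushout. So, e.g., $A(0\le1)=A(0\le2)=A(1\le2)=0'$ satisfies every $S_\dotp$ axiom. Insisting that the dichotomy ``$=0$ versus $=B$'' is literal does not rescue this. It is exactly what lets $0'\neq 0$ count as a nonzero entry, while the axioms, which hold only up to isomorphism, constrain nothing.

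These are actual counterexamples, not just gaps in the write-up. Take any $\C$ with a second zero object $0'$, which $\WaldCat$ explicitly allows. Consider the object of $S^{(2)}_{2,2}\C$ with value $0'$ at every position nondegenerate in both coordinates and $0$ elsewhere, taken with the constant string in the $w$-direction. It is $1$- and $2$-trivial, but its support is a threshold in neither direction, so it is not of the form $\tau\cdot\sigma_k(y,\delta)$. Similarly, placing the drop pattern from (1) in direction $n$, over a diagram that is not single-object in the other directions, gives an $n$-trivial simplex that is not latched.

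So for the usual construction, the reverse inclusion cannot be proved without extra hypotheses. One option is to assume no cofibrations $B\rcofib 0$ with $B\not\cong 0$ and no stray zero objects; this is what the nonrepeating/zero-subsieve condition is designed to exclude in $K'$. The other is to strengthen ``$i$-trivial'' to require that the support be pulled back along some $[m]\to[1]$.

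A minor point: your $\delta$ is reversed. Since $\sigma_1$ places $y$ where $\delta(p)=0$ and $\delta(q)=1$, you need $\delta(j)=0$ for $j<j_0$ and $\delta(j)=1$ for $j\ge j_0$.
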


\begin{proof}
  In the usual construction for $K(\C)$, the structure map $\sigma$ takes a
  diagram of dimension $n-1$ and creates a diagram of dimension $n$ which is
  $n$-trivial.  (See, for example, \cite[Section 6]{Z-closed} for a discussion
  of this construction in detail.)  The $\Sigma_n$-action on diagrams
  $\Ar[\vec m]$ permutes the coordinates of $\Ar[\vec m]$; so if a
  diagram $A$ is $i$-trivial for all $i\in I$ (for some
  $I \subseteq \{1,\ldots,q\}$) then the diagram $\tau\cdot A$ will be
  $\tau(i)$-trivial for all $i\in I$.  In particular, all latched simplices must
  be $i$-trivial for some $i$.
\end{proof}

Using this characterization we can show that $K(\C)$ is flat.

\begin{proposition} \label{prop:Kflat}
  For the usual $S_\dotp$-construction,   $K(\C)$ is increasing, and therefore
  flat.
\end{proposition}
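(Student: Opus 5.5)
We verify the two conditions of Definition~\ref{def:increasing} directly for the usual $S_\dotp$-construction, and then conclude flatness from Theorem~\ref{thm:flat<->inc}. Throughout, we use the explicit form of the structure map. The level $K(\C)_n$ is the diagonal of the $n$-fold simplicial set $[\vec m]\mapsto S^{(n)}_{\vec m}\C$. The map $\sigma: K(\C)_{n-1}\sma S^1 \rto K(\C)_n$ sends $(A,\delta)$, with $\delta$ a non-basepoint simplex of $S^1=\Delta^1/\partial\Delta^1$, to the $n$-trivial diagram obtained from $A$ by inflating along the last coordinate. The inflation places $A$ (or $0$) on the arrows of $\Ar[m_n]$ according to $\delta$, and the basepoint is sent to the distinguished zero diagram.

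For condition~\eqref{it:structure->monic}, injectivity of $\sigma$ comes from recovering $(A,\delta)$ from $\sigma(A,\delta)$. Because $\C$ is pointed with a \emph{chosen} zero object, the pattern of entries equal to the chosen $0$ versus entries equal to $A$ along the $n$-th coordinate determines $\delta$. Here the convention in the definition of single-object is important: entries are either literally the chosen zero or literally $A$. Restricting to any arrow of $\Ar[m_n]$ on which the diagram is $A$ then recovers $A$. One small wrinkle needs care. If $A$ is itself the zero diagram, then $\sigma(A,\delta)$ is the zero diagram for every $\delta$. But then $(A,\delta)$ is the basepoint of the smash product, so injectivity is only required away from these points, and they all collapse consistently.

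For condition~\eqref{it:latched}, suppose $\sigma(y_1)=\rho\cdot\sigma(y_2)$ with $\rho(n)\neq n$, and write $y_i=(A_i,\delta_i)$. By the proof of Lemma~\ref{lem:inLambda}, $\sigma(y_1)$ is $n$-trivial, while $\rho\cdot\sigma(y_2)$ is $\rho(n)$-trivial. Set $j=\rho(n)\neq n$. The common diagram $D$ is therefore both $n$-trivial and $j$-trivial. The key observation is that a diagram which is trivial in two distinct coordinates is obtained by inflating some $(n-2)$-dimensional diagram $z$ in both of those coordinates. One checks this by restricting to the single nonzero value: the object $A$ appearing in the $n$-direction is itself a $(n-1)$-cube that is single-object in direction $j$. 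It follows that $A_1$ is $j$-trivial, say $A_1=\sigma$-inflation of $z$ in coordinate $j$ moved into position by a shuffle. This says exactly that $y_1$ is latched to $z\in K(\C)_{n-2}$. Symmetrically, $A_2=\rho^{-1}$-transport of the $n$-th coordinate data is $\rho^{-1}(n)$-trivial, so $y_2$ is latched to the same $z$, up to the permutation action on $z$. Lemma~\ref{lem:d_latching}(2) is then used to put the shuffles into the standard $(\ell,k-\ell)$ form required by Definition~\ref{def:latch}.

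The main obstacle is this last step: showing rigorously that double triviality yields a common $z$ to which both $y_1$ and $y_2$ are latched, with the correct shuffles. For $y_2$ the permutation $\rho$ moves the trivial coordinate, so one must track how $\rho$ restricted to $\{1,\ldots,n\}\setminus\{n,\rho^{-1}(n)\}$ acts on $z$. Using the factorization from Lemma~\ref{lem:d_latching}(1), I would absorb the $\Sigma_{n-2}$ part into $z$ itself, replacing $z$ by a permuted copy, since latching is defined up to the $\Sigma$-action on the source. This makes the $z$ for $y_1$ and the $z$ for $y_2$ agree. With both conditions verified, $K(\C)$ is increasing, and by Theorem~\ref{thm:flat<->inc} it is flat.
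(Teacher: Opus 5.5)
Your overall route is the paper's. You get monicity of $\sigma$ by reading $(A,\delta)$ back off $\sigma(A,\delta)$, and part~(1) is fine. For condition~\eqref{it:latched} you note that $D=\sigma(y_1)=\rho\cdot\sigma(y_2)$ is trivial in coordinates $n$ and $j=\rho(n)$, and take $z$ to be $D$ with both of those coordinates set to the top arrow.

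The gap is your final step. Latching (Definition~\ref{def:latch}) is defined using $(n-k,k)$-shuffles, so it is \emph{not} invariant under replacing the source by a $\Sigma_{n-2}$-translate. Doing the bookkeeping gives two different latchings:
\begin{itemize}
\item $A_1=\tau\cdot\sigma(z,\delta_2)$, where $\tau$ is the $(n-2,1)$-shuffle with $\tau(n-1)=j$;
\item $A_2=\tau'\cdot\sigma(\pi z,\delta_1)$, where $\tau'(n-1)=\rho^{-1}(n)$ and $\pi\in\Sigma_{n-2}$ records how $\rho$ reorders the other $n-2$ coordinates.
\end{itemize}
Replacing $z$ by $\pi z$ repairs $y_2$ but breaks $y_1$, and in general no common $z$ exists. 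Take $n=4$ and $\rho=(1\,2)(3\,4)$. Let $z\in S^{(2)}_{2,2}\C$ be a square that is neither $1$- nor $2$-trivial and differs from its transpose $z^t=(1\,2)\cdot z$; for example, a pushout square of two non-identity cofibrations $A\to B$, $A\to C$ with $A\neq 0$ and $B\neq C$. Put $A_1=\sigma(z,\delta_2)$ and $A_2=\sigma(z^t,\delta_1)$. Then $\sigma(A_1,\delta_1)=\rho\cdot\sigma(A_2,\delta_2)$. A latching of $A_i$ via a shuffle $\tau$ would make $A_i$ $\tau(3)$-trivial, and $A_1,A_2$ are not $1$- or $2$-trivial, so only the identity shuffle is possible. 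Injectivity of $\sigma$ then shows $A_1$ is latched in degree $2$ only to $z$, and $A_2$ only to $z^t$.

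So condition~\eqref{it:latched} can only be verified in the form in which it is actually used. One option is to require it only for $\rho=\tau^{-1}\tau'$ with $\tau,\tau'$ $(n-1,1)$-shuffles. That is the form appearing in Lemmas~\ref{lem:pre:rootunique}, \ref{lem:pre:latchingunique} and \ref{lem:latchingunique}, and it covers the reduction $\rho=\gamma(\rho',1)$, $A_2\mapsto\rho'A_2$ made in the proof of Proposition~\ref{prop:flat->inc}. The other option is to read ``latched to $z$'' up to the $\Sigma_{n-2}$-action. For such $\rho$, $\rho^{-1}$ maps $\{1,\dots,n\}\smallsetminus\{j,n\}$ order-preservingly onto $\{1,\dots,n-1\}\smallsetminus\{\rho^{-1}(n)\}$. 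Hence $\pi=1$, and your $z$ works on the nose for both $y_1$ and $y_2$.

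The paper's one-line proof passes over this point silently, so the defect lies in how Definition~\ref{def:increasing} is formulated, not in your strategy. Still, your claim that the two $z$'s can be made to agree is false as written.

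A smaller point: don't infer $A_1=\tau\cdot\sigma(z,-)$ from abstract $j$-triviality. That inference needs the nonzero entries to lie in the pattern $\{a<k\le b\}$ of a simplex of $S^1$. Instead, read it off from $D=\rho\cdot\sigma(A_2,\delta_2)$, which shows directly that the inflation in coordinate $j$ is by $\delta_2$.
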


\begin{proof}
  Since all of the data of a diagram is preserved by $\sigma_1$, $\sigma_1$ is
  monic and thus all structure maps of $K(\C)$ are monic.  Moreover, if $\rho$
  has $\rho(n) \neq n$ and $\sigma(x,\delta) = \rho\cdot \sigma(x',\delta')$
  then this implies that the diagram represented by $x$ was $n$-trivial and
  $\rho(n)$-trivial.  In particular, if we take the subdiagram in which the
  $n$-th and $\rho(n)$-th coordinates are both set to the maximum value, we get
  an $n-2$-simplex that both $x$ and $x'$ are latched to.
 Thus $K(\C)$ is increasing, and flat by
  Theorem~\ref{thm:flat<->inc}.
\end{proof}

\begin{corollary}  \label{ex:K_not_projective}
  $K(\C)$ is not a projective spectrum if there exists a non-isomorphism
  cofibration $A \rcofib B$ with $A \neq 0$.
\end{corollary}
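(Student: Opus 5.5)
Since $K(\C)$ is flat by Proposition~\ref{prop:Kflat}, Theorem~\ref{thm:characterize_flat_projective_spectra} applies. It says that $K(\C)$ is projective if and only if $K(\C)_n\setminus \Lambda_n K(\C)$ is $\Sigma_n$-free for every $n$. So the plan is to find one level $n$ and one unlatched simplex with a nontrivial stabilizer. By Lemma~\ref{lem:inLambda}, a simplex of $K(\C)_n$ is unlatched exactly when the corresponding diagram is $i$-trivial for no $i$. So it suffices to produce a diagram that is $i$-trivial for no $i$ but is fixed by some non-identity permutation of the coordinates.

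The natural choice is $n=2$ and the $2$-simplex of $S_\dotp S_\dotp\C$ from Example~\ref{ex:i_trivial}. This is the square with $A$ in three corners and $B$ in the fourth, built from the given cofibration $A\rcofib B$. It lives in multidegree $(1,1)$ of the bisimplicial set, so it gives a simplex of $K(\C)_2$ (after the diagonal/realization conventions of the usual $S_\dotp$-construction). Example~\ref{ex:i_trivial} records that, because $A\neq 0$ and $A\rcofib B$ is not an isomorphism, this diagram is neither $1$-trivial nor $2$-trivial. Hence it is not in $\Lambda_2K(\C)$.

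Next we check that the transposition $(1\,2)\in\Sigma_2$ fixes this simplex. The $\Sigma_2$-action swaps the two coordinates of $\Ar[\vec m]$, which transposes the square. The square is symmetric under transposition: the off-diagonal entries are both $A$, the maps out of the top-left corner are both the identity of $A$, and the maps into $B$ are both the given cofibration. We also have to check that the induced quotient data of the $S_\dotp$-construction are preserved. Here that data is $B/A$ in the relevant slot together with zeros, and these are chosen functorially, so they are swapped to themselves. Thus $(1\,2)$ fixes the simplex, and $K(\C)_2\setminus\Lambda_2K(\C)$ is not $\Sigma_2$-free. By Theorem~\ref{thm:characterize_flat_projective_spectra}, $K(\C)$ is not projective.

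The main obstacle is bookkeeping with the indexing conventions of \cite[Construction 2.2]{blumbergmandell11}. Two things need care. First, the chosen simplex must really be a simplex of level $2$ with the $\Sigma_2$-action given by transposing coordinates; in particular, the transposition acts on the simplicial directions as well, and on the diagonal it fixes the simplex. Second, all the chosen quotients in the full $\Ar[1]\times\Ar[1]$ diagram must be invariant under the swap. If transposing the square leaves the chosen-quotient data unchanged, which follows from functoriality of the choices, the argument goes through.
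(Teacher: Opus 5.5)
Your proposal is correct and is the paper's argument: the square from Example~\ref{ex:i_trivial} is a simplex of $K(\C)_2$ that is neither $1$- nor $2$-trivial, hence lies outside $\Lambda_2K(\C)$, yet is fixed by the transposition, so the freeness criterion of Theorem~\ref{thm:characterize_flat_projective_spectra} fails. Two small corrections: the square lives in multidegree $(2,2)$, as a functor on $\Ar[2]\times\Ar[2]$, since in $\Ar[1]\times\Ar[1]$ at most one entry can be nonzero; and its quotient entries are not chosen functorially, but you may simply choose them symmetrically (the same cofiber of $A\rcofib B$ in both off-diagonal slots) when you define the simplex.
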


\begin{proof}
  The 2-simplex in Example~\ref{ex:i_trivial} is fixed under the nontrivial
  element in $\Sigma_2$ (since the symmetric action just permutes the axes).  It
  is also not in $\Lambda_2K(\C)$, since it is not $1$-trivial or $2$-trivial.  So
  this is a simplex in $K(\C)_2$ with a nontrivial stabilizer, but it is not in
  $\Lambda_2K(\C)$; thus the usual $S_\dotp$-construction does not give a projective
  symmetric spectrum.  
\end{proof}

Although the following result is not needed for the purposes of this paper, we
include it as a technical result which may be useful in the future.

\begin{proposition} \label{prop:injflat}
  Let $F:\C \rto \D$ be an exact faithful functor of Waldhausen categories which
  is injective on objects.  Then with the usual $S_\dotp$-construction,
  $K(\C) \rto K(\D)$ is a flat cofibration.
\end{proposition}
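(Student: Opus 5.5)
We will apply Corollary~\ref{cor:incflatmor} with $Y=K(\D)$, which is flat by Proposition~\ref{prop:Kflat}. Two things then need checking: that $K(F)$ is a level cofibration, and that for every $n$
\[K(F)_n(K(\C)_n)\cap \Lambda_n K(\D) \subseteq K(F)_n(\Lambda_n K(\C)).\]

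\textbf{Level cofibration.} A simplex of $K(\C)_n$ is a string of composable weak equivalences of diagrams $\Ar[\vec m]\rto \C$, that is, a diagram of objects and morphisms of $\C$ satisfying exactness conditions. $K(F)_n$ applies $F$ objectwise and morphismwise. Since $F$ is injective on objects and faithful, it is injective on morphisms: if $F(f)=F(g)$, the sources and targets of $f$ and $g$ agree by injectivity on objects, and then $f=g$ by faithfulness. Hence $F$ induces an injective map on diagrams, and each $K(F)_n$ is monic, i.e.\ a cofibration of $\Sigma_n$-simplicial sets by Remark~\ref{rmk:characterize_cofib}.

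\textbf{Latching condition.} Let $A$ be a simplex of $K(\C)_n$ with $F(A)\in\Lambda_nK(\D)$. By Lemma~\ref{lem:inLambda}, $F(A)$ is $i$-trivial for some $i$. We want to conclude that $A$ itself is $i$-trivial; then $A\in\Lambda_nK(\C)$, again by Lemma~\ref{lem:inLambda}.

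$i$-triviality of $F(A)$ says that, viewed as a functor out of $\Ar[m_i]$, each value is either the zero object or a fixed object $B$, each non-identity morphism is $0\rcofib B$, and the other morphisms are identities. Three things must be pulled back along $F$.
\begin{itemize}
\item If $F(A(x))=F(A(y))$ then $A(x)=A(y)$, by injectivity on objects.
\item If $F(A(\alpha))$ is an identity, then $A(\alpha)$ is an identity, by faithfulness, since $F(\mathrm{id})=\mathrm{id}$.
\item Whether $F(A(x))$ equals the chosen zero of $\D$ reflects whether $A(x)$ is the chosen zero of $\C$.
\end{itemize}
The third point is the one step needing care. Exactness, understood for pointed Waldhausen categories as preserving the chosen zero object, gives $F(0_\C)=0_\D$. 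Injectivity on objects then shows that $F(A(x))=0_\D$ forces $A(x)=0_\C$. Likewise the morphism $0_\D\rcofib B$ pulls back to the unique map $0_\C\rcofib A'$, where $F(A')=B$.

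Applying these observations entrywise to the slices of the diagram (which are themselves diagrams in $S^{(q-1)}\C$, so the same injectivity argument applies at that level), $A$ is single-object along coordinate $i$. Hence $A$ is $i$-trivial, which gives the inclusion and, by Corollary~\ref{cor:incflatmor}, shows that $K(F)$ is a flat cofibration.
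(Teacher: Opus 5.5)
Your proposal is correct and takes the same route as the paper. Both arguments apply Corollary~\ref{cor:incflatmor} to the flat spectrum $K(\D)$ and note that $K(F)$ is levelwise monic. Both then use Lemma~\ref{lem:inLambda} to reduce the claim to showing that a diagram in the image of $\C$ which is $i$-trivial in $\D$ is the image of an $i$-trivial diagram in $\C$. You fill in details that the paper's one-line argument leaves implicit: injectivity on morphisms, preservation and reflection of the chosen zero, and passing from $F$ to the induced functor on $S^{(q-1)}$-diagrams.
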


\begin{proof}
  The conditions on the functor imply that $K(\C) \rto K(\D)$ is a level
  cofibration. 
  By
  Corollary~\ref{cor:incflatmor}, 
  it suffices to check that
  \[K(F)_n(K(\C)_n) \cap \Lambda_n K(\D) \subseteq K(F)_n(\Lambda_nK(\C)).\]
   By Lemma~\ref{lem:inLambda}, the
  space $\Lambda_nK(\D)$ is the simplicial set of $n$-dimensional diagrams in
  $\D$ which are $i$-trivial in some direction $i$.  The space $K(F)_n(K(\C)_n)$
  is the simplicial set of $n$-dimensional diagrams in $\D$ which are contained
  in the image of $\C$.  The intersection of these is exactly those diagrams in
  the image of $\C$ which are $i$-trivial in some direction $i$---which is
  exactly $K(F)_n(\Lambda_n K(\C))$.  Thus the map is a cofibration, as desired.
\end{proof}

\begin{definition}
  We write $\WaldCat$ for the category of Waldhausen categories with a
  distinguished zero object, i.e. for Waldhausen categories where one of the
  isomorphic copies of the zero object is chosen as a part of the
  data.\footnote{Waldhausen \cite{waldhausen} originally defined his categories
    to be pointed in this way; in other sources, including \cite{weibel_kbook}, it is
    only assumed that there is a zero object without an emphasis on this being a
    part of the data.} The distinguished zero object is denoted $0$.
    
      When defining the $S_\dotp$-construction,
  we enforce that all objects which are forced to be $0$ by the definition of
  the construction are the distinguished zero object.
 \end{definition}   
    Let
  $A: \Ar[\vec n] \rto \C$ be a functor.  The \emph{zero subsieve} of
  $A$ is the largest sieve in $\Ar[\vec n]$ such that for every object
  $c$ in the sieve, $A(c)=0$.  
Any object of the usual $S_\dotp$-construction has a nonempty zero subsieve by
definition.  

For our construction of cofibrant $K$-theory we require that each object in the category has 
infinitely many objects that are isomorphic.  If necessary we replace the category with an equivalent one as in the following definition.

\begin{definition} \label{def:cofibK} Let $\C$ be a Waldhausen category with a
  distinguished zero object, and let $\tilde\Z$ be the category whose objects
  are the integers and which has a unique isomorphism between any two
  objects. 
  
  Let $\C_{\tilde \Z} = \C \times \Z$, with $(0,0)$ the distinguished zero
  object.  All the objects of the form $(0,n)$ for $n \neq 0$ are called
  \emph{undistinguished zeroes}.  Write $u_\Z: \C_{\tilde \Z} \rto \tilde\Z$ for
  the forgetful functor.
\end{definition}  

In order to produce a model of $K$-theory where fixed points such as the one in
Example~\ref{ex:i_trivial} do not occur, we wish to restrict attention to those
diagrams with all entries distinct.  The point of considering $\C_{\tilde \Z}$ instead of
$\C$ is to ensure, in a functorial manner, that there exist enough copies of
each object to make this possible, and to make sure that even when a functor is
not injective on objects, it will take diagrams with nonrepeating entries to
diagrams with nonrepeating entries.

\begin{definition}
  We say that  an
  object $X\in S^{(q)}_{\vec n}\C_{\tilde\Z}$ is
  \emph{nonrepeating} if the function $u_\Z X: \ob \Ar[\vec n]
  \rto \Z$,
  \begin{itemize}
  \item  is injective away from the preimage of $0$, and
  \item the preimage of $0$ is the zero subsieve of $X$.
  \end{itemize}
  We say that $X$ is \emph{essentially nonrepeating} if $X = sY$ where $s$ is
  some composition of degeneracies and $Y$ is nonrepeating.

  Let $\tilde S^{(q)}_{\vec n} \C_{\tilde \Z}$ be the full Waldhausen subcategory of
  $S^{(q)}_{\vec n}(\C_{\tilde \Z})$ containing only essentially nonrepeating
  diagrams.
\end{definition}
  
Since repetitions in an essentially nonrepeating diagram can be obtained only
through the application of degeneracies, it can be shown by induction that the
only way for an object to repeat in an essentially nonrepeating diagram is in a
contiguous rectangle.
  
  \begin{proposition} \label{prop:Walequiv}
  The forgetful functor $\tilde S^{(q)}_{\vec n} \C_{\tilde \Z}\to S^{(q)}_{\vec n}\C$ is an
  equivalence of Waldhausen categories.
  \end{proposition}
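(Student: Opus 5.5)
We need to show that the forgetful functor $\Phi:\tilde S^{(q)}_{\vec n}\C_{\tilde\Z}\to S^{(q)}_{\vec n}\C$ is an equivalence of Waldhausen categories. Concretely, we check four things: $\Phi$ is exact; it is fully faithful; it is essentially surjective; and it reflects cofibrations and weak equivalences. Since an exact equivalence whose inverse is also exact is an equivalence of Waldhausen categories, these suffice.

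\textbf{Exactness and the Waldhausen structure.} The projection $\C_{\tilde\Z}=\C\times\tilde\Z\to\C$ is an equivalence, because $\tilde\Z$ is equivalent to the terminal category. We take the Waldhausen structure on $\C_{\tilde\Z}$ to be the one pulled back along this projection. Then $S^{(q)}_{\vec n}\C_{\tilde\Z}\to S^{(q)}_{\vec n}\C$ is exact, and a map is a cofibration or weak equivalence exactly when its image is. Restricting to the full subcategory $\tilde S^{(q)}_{\vec n}\C_{\tilde\Z}$, which carries the induced structure, keeps these properties.

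\textbf{Full faithfulness.} Morphisms in $\C\times\tilde\Z$ are pairs $(f,\iota)$, where $\iota$ is the unique isomorphism in $\tilde\Z$. So a natural transformation between diagrams $X,Y\colon\Ar[\vec n]\to\C_{\tilde\Z}$ is determined by its $\C$-components. Conversely, any natural transformation of the underlying $\C$-diagrams lifts uniquely, because naturality in the $\tilde\Z$-coordinate is automatic: there is exactly one morphism between any two objects. Hence $\Phi$ is fully faithful already on all of $S^{(q)}_{\vec n}\C_{\tilde\Z}$, and so on the full subcategory.

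\textbf{Essential surjectivity.} This is the step that needs care. Given $A\in S^{(q)}_{\vec n}\C$, we construct a preimage directly. Let $Z$ be the zero subsieve of $A$. Define $X(c)=(A(c),0)$ for $c\in Z$. Choose an injection $\ob\Ar[\vec n]\setminus Z\to\Z\setminus\{0\}$, say by enumerating the objects, and set $X(c)=(A(c),m_c)$, where $m_c$ is the chosen integer. Put $X(\alpha)=(A(\alpha),\iota)$ on morphisms.

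We then check that $X$ lies in the $S^{(q)}$-construction for $\C_{\tilde\Z}$:
\begin{itemize}
\item The entries forced to be zero lie in $Z$, so they are the distinguished zero $(0,0)$.
\item The exactness and cofiber conditions transfer through the pulled-back structure.
\item $X$ is nonrepeating by construction: $u_\Z X$ is injective off $0$, and its zero locus is exactly $Z$.
\end{itemize}
One subtlety: an object outside $Z$ may be isomorphic to zero, or equal to $0$, without being in the sieve. It then gets a nonzero label, which is harmless, since its label only has to be distinct and $(0,m)$ is an undistinguished zero. We also need the zero subsieve of $X$, defined as the largest sieve on which $X$ equals the distinguished $(0,0)$, to be exactly $Z$; this holds because the only entries equal to $(0,0)$ are those in $Z$. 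Then $\Phi(X)=A$ on the nose, so $\Phi$ is in fact surjective on objects.

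Since $\Phi$ is fully faithful, surjective on objects, exact, and reflects cofibrations and weak equivalences, it is an equivalence of Waldhausen categories. Choosing the lift $X$ for each $A$ gives an inverse functor, which is exact by this reflection property.

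The main obstacle is bookkeeping rather than mathematics. We must verify that the zero-subsieve condition in the definition of nonrepeating matches the distinguished-zero convention in $\WaldCat$, and that lifted diagrams satisfy the $S_\dotp$ axioms for pushouts and cofibers in $\C_{\tilde\Z}$. The latter should follow because colimits in $\C\times\tilde\Z$ are computed componentwise, and any object of $\tilde\Z$ serves as the colimit in the second factor.
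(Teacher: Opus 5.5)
Your proposal is correct and takes the same route as the paper, whose entire proof is the remark that $\tilde\Z$ supplies infinitely many isomorphic copies, so any diagram in $S^{(q)}_{\vec n}\C$ can be relabelled so that no $\tilde\Z$-coordinate repeats; you carry out that relabelling explicitly (obtaining surjectivity on objects), and you add the full-faithfulness argument ($\tilde\Z$ is indiscrete) and the exactness checks that the paper leaves implicit. The one step that rests on convention rather than argument is your claim that the forced-zero entries lie in $Z$: if the zero subsieve is read literally as a downward-closed subset of $\Ar[\vec n]$ this fails (for instance $(1,1)$ lies above $(0,1)$), so, like the paper, you are implicitly exempting the forced distinguished zeros from the nonrepeating condition.
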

  
  \begin{proof}
    Since $\tilde \Z$ has infinitely many isomorphic objects, every object in
    $S^{(q)}_{\vec n}\C$ is isomorphic to an object with no duplicated
    $\tilde \Z$-coordinates.
  \end{proof}
   
  \begin{remark}
    The undistinguished zeroes are necessary for this proposition to hold,
    because if $\C$ contains a nonzero object $A$ with a cofibration
    $A \rcofib 0$, then the object $0\rcofib A \rcofib 0$ in $S_2\C$ would
    \emph{not} have a preimage in $\tilde S_2\C_\Z$ if nondistinguished zeroes
    did not exist.
  \end{remark}
  
  \begin{definition}

  Define
  \[K'(\C)_q = |w\tilde S^{(q)}_\dotp \C_{\tilde \Z}|.\]
  We call this the \emph{cofibrant construction of $K$-theory}.
\end{definition}

The forgetful functor induces a map $\iota: K'(\C) \rto K(\C)$.
\begin{corollary}
  The map $\iota$ is a weak equivalence.
\end{corollary}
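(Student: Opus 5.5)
The plan is to check that $\iota$ is a levelwise weak equivalence of symmetric spectra, i.e.\ that for every $q$ the map
\[\iota_q: |w\tilde S^{(q)}_\dotp \C_{\tilde\Z}| \rto |wS^{(q)}_\dotp \C|\]
is a weak equivalence of simplicial sets. A level equivalence is in particular a stable equivalence, so this suffices. Each side is the diagonal (realization) of a multisimplicial set. For the target, the multisimplicial degree $\vec n$ piece is the nerve of $wS^{(q)}_{\vec n}\C$; for the source, it is the nerve of $w\tilde S^{(q)}_{\vec n}\C_{\tilde\Z}$. The map $\iota$ is induced degreewise by the forgetful functors. By the standard realization lemma (a map of multisimplicial spaces that is a weak equivalence in each multidegree induces a weak equivalence on diagonals), it suffices to show that each functor
\[w\tilde S^{(q)}_{\vec n}\C_{\tilde\Z} \rto wS^{(q)}_{\vec n}\C\]
induces a weak equivalence on nerves.

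For the degreewise statement, I will use Proposition~\ref{prop:Walequiv}: the forgetful functor $\tilde S^{(q)}_{\vec n}\C_{\tilde\Z}\rto S^{(q)}_{\vec n}\C$ is an equivalence of Waldhausen categories. That means it is an equivalence of underlying categories that preserves and reflects weak equivalences. Choose an inverse equivalence $G$ with natural isomorphisms $\eta: 1\Rto \iota G$ and $\epsilon:G\iota\Rto 1$. Isomorphisms are weak equivalences, so $G$ restricts to the subcategories of weak equivalences, and $\eta,\epsilon$ are natural transformations between functors on $w(-)$. Natural transformations give simplicial homotopies on nerves, so $N\iota$ is a homotopy equivalence.

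The step I expect to need the most care is confirming that Proposition~\ref{prop:Walequiv} really supplies an equivalence of categories compatible with the $w$-subcategories. Concretely, the functor is fully faithful because morphisms in $\C_{\tilde\Z}=\C\times\tilde\Z$ are pairs with a unique second coordinate, and weak equivalences are detected in $\C$. Essential surjectivity is the content of that proposition, including the use of undistinguished zeroes to realize nonrepeating lifts. A further check is that the whole construction is simplicial in $\vec n$: essentially nonrepeating diagrams are closed under faces and degeneracies, so the $\tilde S$ pieces form a multisimplicial subobject. Faces of nonrepeating diagrams need care with the zero subsieve condition, since some face maps insert distinguished zeroes. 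I would verify this directly, or else note that the paper's definition of $\tilde S^{(q)}_\dotp$ already presupposes it. Once these are granted, the degreewise homotopy equivalences assemble via the realization lemma into the levelwise, hence stable, weak equivalence $\iota$.
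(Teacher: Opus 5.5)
Your proposal is correct and follows the paper's argument: Proposition~\ref{prop:Walequiv} makes $\iota$ a level equivalence, and level equivalences are stable equivalences. You have made explicit the steps the paper leaves implicit: the equivalence restricting to the $w$-subcategories and giving homotopy equivalences of nerves in each multidegree, the realization lemma for diagonals, and closure of essentially nonrepeating diagrams under the simplicial operators.
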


\begin{proof}
  Proposition~\ref{prop:Walequiv} implies that $\iota$ is a level equivalence,
  which implies that it is a weak equivalence.
\end{proof}

As a first step to showing that $K'(\C)$ is projective we will show that it is
flat.  The following is proved identically to Proposition~\ref{prop:Kflat}.

\begin{lemma} \label{lem:K'flat}
  For the cofibrant construction of $K$-theory,  $K'(\C)$ is
  increasing and therefore flat.
\end{lemma}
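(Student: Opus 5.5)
We verify the two conditions of Definition~\ref{def:increasing} for $K'(\C)$ directly, exactly as in the proof of Proposition~\ref{prop:Kflat}, and then conclude flatness from Theorem~\ref{thm:flat<->inc}. The first step is to check that the structure maps of $K'(\C)$ are well defined. We must show that $\sigma_1$, which takes a $q$-dimensional diagram to the $(q+1)$-dimensional diagram that is constant (single-object) in the new last coordinate, sends essentially nonrepeating diagrams to essentially nonrepeating ones. A simplex of $S^1$ is a degeneracy of the nondegenerate $1$-simplex or is the basepoint, so the new diagram is a degeneracy, in the new direction, of the diagram $0\rcofib A$. Its zero subsieve is the expected one, so it lies in $\tilde S^{(q+1)}_{\dotp}\C_{\tilde\Z}$. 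The argument for Lemma~\ref{lem:inLambda} then applies verbatim: the $\Sigma_q$-action permutes coordinates and preserves the essentially nonrepeating condition, so $\Lambda_q K'(\C)$ consists of the diagrams that are $i$-trivial for some $i$.

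For condition~(\ref{it:structure->monic}), $\sigma_1$ retains all the data of the diagram $x$ (we recover $x$ by restricting to the face where the new coordinate takes its maximal value), so $\sigma_1$ is monic. Since $\tilde S^{(q)}$ is a full subcategory of $S^{(q)}$, monicity can also be checked after the inclusion into $K(\C_{\tilde\Z})$, where it is already known.

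For condition~(\ref{it:latched}), suppose $\sigma(y_1)=\rho\cdot\sigma(y_2)$ with $\rho(n)\ne n$. The common diagram is then both $n$-trivial and $\rho(n)$-trivial. Set the $n$-th and $\rho(n)$-th coordinates to their maximal values to obtain a subdiagram $z$. This $z$ is a face of an essentially nonrepeating diagram, so it is again essentially nonrepeating and lies in $K'(\C)_{n-2}$. Both $y_1$ and $y_2$ are latched to $z$, with $y_2$ latched via the appropriate shuffle. Alternatively, we can note that $K'(\C)$ is levelwise a simplicial subset of $K(\C_{\tilde\Z})$ that is closed under faces, the structure maps and the symmetric group actions. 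The witness $z$ produced by Proposition~\ref{prop:Kflat} is a face of a simplex of $K'(\C)$, so it lies in $K'(\C)$.

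The main obstacle is this closure check: we must verify that faces of essentially nonrepeating diagrams are again essentially nonrepeating. Here the zero subsieve condition needs care, because a face can turn an object into a boundary zero. We would handle it using the remark that repetitions occur only in contiguous rectangles arising from degeneracies, so restricting to a face preserves the form $sY$ with $Y$ nonrepeating. Once that is settled, Theorem~\ref{thm:flat<->inc} gives that $K'(\C)$ is flat.
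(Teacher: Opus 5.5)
Your proposal follows the paper's own proof, which just says this lemma is proved identically to Proposition~\ref{prop:Kflat}. You also supply the closure checks the paper leaves implicit; the point you flag as the main obstacle is handled most cleanly by the simplicial identity $d\,s=s'\,d'$, since faces and the slices used to build $z$ are restrictions along poset embeddings, which preserve nonrepetition. One caution, inherited from the paper: writing $\rho=\gamma(\rho',1)$ with $\gamma$ an $(n-1,1)$-shuffle, your $z$ is a common witness for $y_1$ and $\rho'y_2$, but $y_2$ itself may be latched only to a coordinate-permuted copy of $z$ (e.g.\ $n=4$, $\rho=(1\,4)(2\,3)$, $z$ asymmetric), so ``$y_2$ latched via the appropriate shuffle'' holds only after that reindexing.
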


\begin{remark}
  Exactly as observed in Lemma~\ref{lem:inLambda}, for $k \geq 1$, the
  space $\Lambda_kK'(\C)$ is the subspace of $K'(\C)_k$ which contains those
  diagrams which are $i$-trivial for some $1\leq i \leq k$.
\end{remark}

We turn our attention to proving that $K'(\C)$ is projective.  To prove this we
need a preliminary result showing that the simplices which make $K(\C)$ fail to
be projective are not present inside $K'(\C)$.  

\begin{proposition} \label{prop:fix->triv} Let $A\in \tilde S^{(q)}_{\vec
    n}\C_{\tilde \Z}$
  and suppose that $\tau \cdot A = A$ for some nonidentity
  $\tau \in \Sigma_q$.  Then $A$ is $i$-trivial in some direction $i$.
\end{proposition}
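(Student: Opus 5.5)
The plan is to reduce to the nonrepeating case and then use injectivity of $u_\Z$ to force most of the diagram to be the distinguished zero.

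\emph{Reduction to the nonrepeating case.} Write $A = sY$ with $s$ a composite of degeneracies and $Y$ nonrepeating. In each coordinate, the Eilenberg--Zilber lemma for multisimplicial sets makes this decomposition unique. The $\Sigma_q$-action permutes coordinates, so $\tau\cdot A = (\tau s\tau^{-1})(\tau\cdot Y)$, where $\tau s \tau^{-1}$ is again a composite of degeneracies. Also $\tau\cdot Y$ is again nonrepeating, since permuting axes does not change the multiset of $\tilde\Z$-labels or the zero subsieve. Uniqueness then gives $\tau\cdot Y = Y$. In the other direction, applying a degeneracy to a diagram that is single-object in direction $i$ only repeats slices, so it stays single-object in direction $i$. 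Hence if $Y$ is $i$-trivial, so is $A$, and it suffices to treat nonrepeating $Y$.

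\emph{Localizing the support.} Let $Y$ be nonrepeating with $\tau\cdot Y = Y$, and pick $i$ with $j = \tau(i) \neq i$; necessarily $n_i = n_j$. For an object $x = (x_1,\ldots,x_q)$ of $\Ar[\vec n]$ we have $Y(\tau x) = Y(x)$ (with $\tau$ permuting coordinates appropriately), so $u_\Z Y(x) = u_\Z Y(\tau x)$. Nonrepeating means $u_\Z Y$ is injective away from $0$, and its preimage of $0$ is exactly the zero subsieve. So every $x$ with $\tau x \neq x$ lies in the zero subsieve, i.e.\ $Y(x)$ is the distinguished zero. In particular $Y(x) = 0$ whenever $x_i \neq x_j$, and more generally whenever $x$ is not fixed by $\tau$. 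Thus all nonzero entries lie on the ``diagonal'' $x_i = x_j$.

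\emph{Deducing $i$-triviality.} I will show that the slice functor $\Ar[n_i] \to S^{(q-1)}_{\ldots}\C_{\tilde\Z}$ in direction $i$ is single-object. If $n_i \leq 1$ this is immediate: the only object of $\Ar[n_i]$ not forced to be $0$ is $(0,1)$, so at most one slice is nonzero and every non-identity map is out of a zero slice. For $n_i \geq 2$, I will use the cofiber-sequence (exactness) conditions of the $S_\dotp$-construction in direction $j$ to show that the diagonal entries must also vanish, so $Y$ is identically zero and trivially single-object in direction $i$. Concretely, take $x_i = (a,b)$ with $0 < a < b$ or $a < b < n_i$, and form the cofiber sequence $(0,a) \rcofib (0,b) \rto (a,b)$ in coordinate $j$ with all other coordinates fixed. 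The first two terms have $x_j \neq x_i$, so they are $0$. Hence $Y$ at the diagonal point $x_j = x_i = (a,b)$ is a cofiber of $0 \rcofib 0$, so it is a zero object. Since it sits in a sieve of zeros, the nonrepeating condition forces it to be the distinguished $0$. The remaining diagonal entries are handled by a similar sequence in coordinate $i$, or by an induction over $(a,b)$ using the sieve structure. This shows that only the ``top'' diagonal entry could survive, and then the direction-$i$ slices have at most one nonzero value, so they are single-object.

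The main obstacle is this last step. One has to be careful that the cofiber conditions pin down the \emph{distinguished} zero rather than merely some zero object: undistinguished zeroes $(0,m)$ with $m \neq 0$ are allowed, and they are exactly what injectivity of $u_\Z$ controls. One also has to check that the nonzero slices have non-identity maps only of the form $0 \rcofib A$. I expect to settle this by checking that any non-identity map between slices goes out of a slice lying entirely in the zero subsieve.
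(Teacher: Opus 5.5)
\textbf{Verdict: there is a genuine gap in your final step.} Your reduction to a nonrepeating $Y$ and your localization step are sound. The localization (a point of $\Ar[\vec n]$ not fixed by $\tau$ must carry label $0$) treats an arbitrary $\tau$ directly, which avoids the paper's reduction to a transposition acting on $S^{(2)}_{n,n}$. One minor repair: apply Eilenberg--Zilber uniqueness to the nondegenerate part of $A$, which is again nonrepeating. A nonrepeating diagram can itself be degenerate (e.g.\ the zero diagram), so ``the'' nonrepeating factor is not unique.

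\textbf{Exactness cannot close the last step.}
\begin{itemize}
\item Take $n_i\ge 2$ and let $D$ be the entry at $x_i=x_j=(0,1)$. Every cofiber sequence in direction $i$ or $j$ through $D$ is either trivial or of the form $D\rcofib 0\to 0$. In a Waldhausen category a nonzero $D$ may admit a cofibration to $0$; this is exactly the situation in the paper's remark on undistinguished zeroes.
\item For such a $D$, the object of $S^{(2)}_{2,2}\C_{\tilde\Z}$ with $D$ at $((0,1),(0,1))$ and the distinguished zero everywhere else is $(1\ 2)$-invariant and satisfies every exactness condition. Only the nonrepeating condition excludes it.
\item Your specific sequence $(0,a)\rcofib(0,b)\to(a,b)$ is degenerate when $a=0$.
\item The inference ``a zero object sitting in a sieve of zeros must be the distinguished $0$'' runs in the wrong direction. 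Distinguished zeroes \emph{below} a point do not put that point in the zero subsieve, and an undistinguished zero with a fresh label there violates nothing. This really occurs at the top entry $x_i=x_j=(n_i-1,n_i)$, so $Y$ need not be identically the distinguished zero.
\end{itemize}

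\textbf{The missing idea: the zero subsieve is closed \emph{downward}.} Combine this with your localization. (The entries forced to be $0$ by the $S_\dotp$-construction must be exempted from the nonrepeating condition, or no nonzero diagram is nonrepeating.)
\begin{itemize}
\item If $x$ has nonzero label, then $x$ is $\tau$-fixed (so $x_i=x_j$), not forced to be $0$, and outside the zero subsieve.
\item Suppose $x_i\neq(n_i-1,n_i)$. Choose a non-identity arrow $c>x_i$ in $\Ar[n_i]$, and let $y$ be $x$ with its $i$-th coordinate replaced by $c$.
\item Then $y\geq x$, so $y$ is outside the zero subsieve. It is also not forced to be $0$, hence has nonzero label.
\item But $y_i=c\neq x_j=y_j$, so $y$ is not $\tau$-fixed, contradicting the localization.
\end{itemize}
Thus every entry with $x_i\neq(n_i-1,n_i)$ is the distinguished zero. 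The direction-$i$ slice functor then takes at most one value other than $0$, and $Y$ is $i$-trivial. This needs no case split on $n_i$ and no cofiber sequences.

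\textbf{A smaller error.} The slice map $(n_i-1,n_i)\to(n_i,n_i)$ goes \emph{into} a zero slice, so your claim that every non-identity map leaves a zero slice is false. ``Single-object'' has to be read, as the paper tacitly does, as allowing maps $A\to 0$ as well as $0\rcofib A$.
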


\begin{proof}
  Since the action of $\tau$ permutes the $S_\dotp$-directions, if
  $A = \tau\cdot A$ and the cycle decomposition $\tau$ contains a $k$-cycle
  $\tau'$ then $A = \tau' \cdot A$.  If $\tau' = (a_1\ \cdots\ a_k)$ then
  $(a_1\ a_2) \cdot A = A$.  An object $A$ is $i$-trivial if and only if
  $\sigma\cdot A$ is $\sigma(i)$-trivial; in particular it suffices to prove the
  statement of the proposition in the case when $\sigma = (1\ 2)$.  Moreover,
  since
  \[S^{(q)}_{n_1,\ldots,n_q}S^{(r)}_{n_{q+1},\ldots,n_{q+r}}\C \cong
    S^{(q+r)}_{n_1,\ldots,n_{q+r}}\C\]
  proving the statement for the case of $\sigma$ is acting on $S_{n,m}^{(2)}\C$
  is sufficient (by replacing $\C$ with $S^{(q-2)}_{n_3,\ldots,n_q}\C$ if
  necessary).

  Suppose that $A\in S^{(2)}_{m,n}\C$ is fixed by $(1\ 2)$.  In this proof we
  will focus on objects of the form $(0<i,0<j)$ in $\Ar[m,n]$, writing them (by
  an abuse of notation) simply as $(i,j)$.  The action of $\sigma$ takes
  $A(i,j)$ to $A(j,i)$, so we must have $m=n$.  Let $i$ be the minimal integer
  such that $(i,n)$ is outside of the zero subsieve of $A$; let $B = A(i,n)$.
  Since $A$ is fixed by $\sigma$, $A(n,i) = B$, as well; thus $A(i,i)=A(n,n)=B$,
  as well, since $A$ is essentially nonrepeating.   Note that every
  $(a,b)\in \Ar[n,n]$, if $a < i$ then it is in the zero subsieve, and thus (by
  the $\sigma$-invariance of $A$) if $b < i$ it is also in the zero subsieve.
  Thus we see that $A(a,b)$ is either equal to $B$ or it is the zero object.  In
  other words, this is a diagram that contains only two types of morphisms:
  identity morphisms, and the morphism $0 \rcofib B$, in both direction; in
  other words, it is both $1$-trivial and $2$-trivial.
\end{proof}

\begin{corollary} \label{cor:K'proj}
  $K'(\C)$ is projective.
\end{corollary}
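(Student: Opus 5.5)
We combine Lemma~\ref{lem:K'flat} with Theorem~\ref{thm:characterize_flat_projective_spectra}. By Lemma~\ref{lem:K'flat}, $K'(\C)$ is flat. Theorem~\ref{thm:characterize_flat_projective_spectra} then says it is enough to show that, for every $q$, the action of $\Sigma_q$ on $K'(\C)_q \setminus \Lambda_q K'(\C)$ is free. So we take a simplex $x$ of $K'(\C)_q$ that is not in $\Lambda_q K'(\C)$ and a $\tau\in\Sigma_q$ with $\tau\cdot x = x$, and we must show that $\tau$ is the identity.

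Now describe the simplices explicitly. We have $K'(\C)_q = |w\tilde S^{(q)}_\dotp \C_{\tilde\Z}|$, realized as a diagonal. A $p$-simplex is therefore a string of $p$ weak equivalences $A_0 \to A_1 \to \cdots \to A_p$ in $\tilde S^{(q)}_{p,\ldots,p}\C_{\tilde\Z}$. The group $\Sigma_q$ acts by permuting the $S_\dotp$-directions in each $A_j$ and on the maps between them. If $\tau\cdot x = x$, then $\tau\cdot A_j = A_j$ for every $j$, and in particular $\tau\cdot A_0 = A_0$. Suppose $\tau$ is not the identity. Proposition~\ref{prop:fix->triv} then shows that $A_0$ is $i$-trivial in some direction $i$.

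The remaining step is to pass from $A_0$ being $i$-trivial to the whole simplex $x$ lying in $\Lambda_q K'(\C)$. This would contradict $x\notin\Lambda_qK'(\C)$ and force $\tau=1$. By the remark after Lemma~\ref{lem:K'flat}, $\Lambda_q K'(\C)$ consists of the simplices that are $i$-trivial for some $i$, meaning the whole string of weak equivalences is $i$-trivial as a diagram in $\Ar[\vec n]$. We therefore have to check that the entire string is $i$-trivial in a single common direction.

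I expect this to be the main obstacle. Proposition~\ref{prop:fix->triv} only gives triviality of each object $A_j$, possibly in different directions. I would first try to avoid the issue by applying Proposition~\ref{prop:fix->triv} to the whole string at once. The string is an object of $\tilde S^{(q)}_{\vec n}$ of the Waldhausen category $\Ar[p]$-diagrams of weak equivalences in $\C_{\tilde\Z}$. Its proof is purely combinatorial: it reduces to a transposition $(a\ b)$ and uses only essential nonrepetition and the zero subsieve. The key point is that the choice of direction in that proof, namely the pair $a,b$ of coordinates in a cycle of $\tau$, depends only on $\tau$ and not on $j$. One would then check that for each $j$ the argument makes $A_j$ both $a$-trivial and $b$-trivial. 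Since $a$ and $b$ are the same for all $j$, the whole string is $a$-trivial, so $x\in\Lambda_qK'(\C)$, which is the desired contradiction.

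The degenerate case needs care: essentially nonrepeating diagrams may have repeated entries coming from degeneracies. These repetitions occur only in contiguous rectangles, and the argument in Proposition~\ref{prop:fix->triv} already handles this.
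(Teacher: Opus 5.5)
Your route is the paper's: flatness from Lemma~\ref{lem:K'flat}, the criterion of Theorem~\ref{thm:characterize_flat_projective_spectra}, then Proposition~\ref{prop:fix->triv} and Lemma~\ref{lem:inLambda}. The paper applies Proposition~\ref{prop:fix->triv} directly to a simplex $A\in K'(\C)_n$ as though it were a single object of $\tilde S^{(n)}_{\vec m}\C_{\tilde\Z}$, so it never addresses the nerve direction. You are right that this is the crux, but neither of your two ways through it works.

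\textbf{First route.} Applying Proposition~\ref{prop:fix->triv} to the whole string is not justified. Each $A_j$ has the form $s_jY_j$ with its own degeneracy operator $s_j$. The string as a whole need not be $sY$ for a single $s$ and a nonrepeating $Y$, which is what the proof of that proposition uses.

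\textbf{Second route.} Knowing that every $A_j$ is $a$-trivial for the same $a$ does not put $x$ in $\Lambda_qK'(\C)$. A latched $p$-simplex $\tau\cdot\sigma_k(y,\delta)$ applies one $\delta\in S^k_p$ to the entire string. So every $A_j$ must be $a$-trivial with the \emph{same} support in $\Ar[p]$, i.e.\ the same surjection $[p]\to[1]$. The weak equivalences $A_j\to A_{j+1}$ can move that support.

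\textbf{Counterexample.} Take $q=p=2$ and $m,m'\neq 0$.
\begin{enumerate}
\item Let $A_0\in\tilde S^{(2)}_{2,2}\C_{\tilde\Z}$ have the undistinguished zero $(0,m)$ at the four positions $\{(0,1),(0,2)\}^2$ of $\Ar[2]\times\Ar[2]$, and $(0,0)$ elsewhere. This is $s_1$ in each direction applied to a one-entry diagram; undistinguished zeroes at non-forced positions are intended, cf.\ the remark after Proposition~\ref{prop:Walequiv}.
\item Let $A_1$ have $(0,m')$ at $\{(0,2),(1,2)\}^2$, obtained via $s_0$ in each direction.
\item All entries are zero objects, so there is a unique map $A_0\to A_1$, and it is an isomorphism, hence a weak equivalence.
\end{enumerate}
The $2$-simplex $x=(A_0\to A_1\xrightarrow{=}A_1)$ of $K'(\C)_2$ is fixed by the transposition in $\Sigma_2$. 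Each $A_j$ is both $1$- and $2$-trivial. However, in either direction the support of $A_0$ is cut out by $(0,1,1)\in S^1_2$ and that of $A_1$ by $(0,0,1)\in S^1_2$. Hence no $\tau\cdot\sigma_k(y,\delta)$ equals $x$, so $x\notin\Lambda_2K'(\C)$, yet $x$ has a nontrivial stabilizer. If $\C$ has nonzero weakly contractible objects, such as acyclic complexes, you get similar examples without using undistinguished zeroes.

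So the step you flagged is not merely unproven; it is false for $K'$ as defined, and the paper's proof does not close it either. Some nonrepetition condition imposed on the whole string, not object by object, would be needed.
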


\begin{proof}
  Since $K'(\C)$ is flat, in order to check that $K'(\C)$ is projective by Theorem~\ref{thm:characterize_flat_projective_spectra} it
  remains to show that the $\Sigma_n$-action on $K'(\C)_n \backslash \Lambda_nK'(\C)$
  is free. 
  
  Let $\tau\in \Sigma_n$ and suppose that $\tau \cdot A = A$ for
  some $A\in K'(\C)_n$ and nonidentity $\tau$.  Then by
  Proposition~\ref{prop:fix->triv}, $A$ is $i$-trivial in some direction $i$, so
  by Lemma~\ref{lem:inLambda}, $A\in \Lambda_nK(\C_\Z)$.  Since $A$ is
  essentially nonrepeating, $A\in \Lambda_n K'(\C)$, and thus $\Sigma_n$ acts freely
  only $K'(\C)_n \backslash \Lambda_nK'(\C)$, as desired.
\end{proof}

\begin{remark}
  If $\C$ has infinitely many objects in each nonzero isomorphism class, and if
  there do not exist cofibrations $A \rcofib 0$ where $A$ is nonzero, then we
  could define $K'(\C)$ to be the subspectrum of $K(\C)$ containing only
  essentially nonrepeating diagrams.  The same proof as above works to prove
  that this model is also projective cofibrant.  However, the construction will
  no longer be functorial.
\end{remark}

\begin{corollary} \label{cor:injproj}
  Let $F: \C \rto \D$ be a faithful exact functor which is injective on
  objects.  Then $K'(F)$ is a projective cofibration.  
\end{corollary}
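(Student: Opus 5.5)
The plan is to reduce the statement to Corollary~\ref{cor:flatY->proj}. By Corollary~\ref{cor:K'proj}, $K'(\D)$ is projective. So it suffices to show that $K'(F)\colon K'(\C)\rto K'(\D)$ is a flat cofibration. For this we follow the proof of Proposition~\ref{prop:injflat} nearly verbatim, with $K'$ in place of $K$.

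First, we check that $F$ induces a functor $F_{\tilde\Z} = F\times \mathrm{id}\colon \C_{\tilde \Z}\rto \D_{\tilde\Z}$. This functor is exact, faithful and injective on objects. It also sends the distinguished zero $(0,0)$ to $(0,0)$, since exact functors in $\WaldCat$ preserve the chosen zero. Applying $F_{\tilde\Z}$ entrywise does not change the $\Z$-coordinates of a diagram. Hence it preserves nonrepeating diagrams: injectivity of $u_\Z$ away from $0$ is untouched, and the preimage of $0$ is still the zero subsieve, because $F$ sends nonzero objects to nonzero objects. Here one should note that an object $(A,k)$ with $k\neq 0$ stays away from $(0,0)$, and $(A,0)$ can only occur as the distinguished zero. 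Since $F_{\tilde\Z}$ commutes with degeneracies, it also preserves essentially nonrepeating diagrams. Thus $K'(F)$ is defined levelwise by restricting the map induced on all diagrams. Because $F_{\tilde\Z}$ is faithful and injective on objects, the induced map on nerves of weak equivalences of diagram categories is injective on simplices. So $K'(F)$ is a level cofibration.

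Next we apply Corollary~\ref{cor:incflatmor}, which requires $K'(\D)$ to be flat; this is Lemma~\ref{lem:K'flat}. We must show
\[K'(F)_n(K'(\C)_n)\cap \Lambda_n K'(\D)\subseteq K'(F)_n(\Lambda_n K'(\C)).\]
By the remark following Lemma~\ref{lem:K'flat}, $\Lambda_nK'(\D)$ consists of those diagrams (simplices of $w$-nerves of diagrams) that are $i$-trivial for some $i$. Let $x=K'(F)_n(y)$ be $i$-trivial. Being $i$-trivial is a condition on which entries equal which, and on which maps are identities or the maps $0\rcofib A$. Since $F_{\tilde\Z}$ is injective on objects, faithful, and preserves the distinguished zero, these conditions hold for $x$ if and only if they hold for $y$. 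So $y$ is $i$-trivial and lies in $\Lambda_nK'(\C)$.

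The main obstacle is this last reflection step. One must check that identities and zero maps are reflected: faithfulness gives that $F(f)=\mathrm{id}$ forces $f=\mathrm{id}$, since $F(\mathrm{id})=\mathrm{id}$. The other delicate point is the claim that the zero of $\D$ pulls back to the zero of $\C$. For this we use injectivity on objects together with $F(0)=0$. With the flat cofibration established, Corollary~\ref{cor:flatY->proj} shows that $K'(F)$ is a projective cofibration.
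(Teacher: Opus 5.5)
Your proposal is correct and follows the paper's argument. Projectivity of $K'(\D)$ reduces the claim via Corollary~\ref{cor:flatY->proj} to flatness, which is then checked with Corollary~\ref{cor:incflatmor} and the $i$-triviality description of $\Lambda_n$, exactly as in Proposition~\ref{prop:injflat}. You also supply details the paper leaves implicit: that $F\times\mathrm{id}$ preserves essentially nonrepeating diagrams, and that $i$-triviality is reflected because $F$ is faithful, injective on objects and preserves the distinguished zero.
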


\begin{proof}
  By Corollary~\ref{cor:K'proj} $K'(\D)$ is projective; by definition $K(F)$ is
  a level cofibration.  We now show that $K'(F)$ is flat.
  Following the same lines as the proof of Proposition~\ref{prop:injflat}, we see
  that it suffices to show that
  \[K'(F)_n(K'(\C)_n) \cap \Lambda_n K'(\D) \subseteq K'(F)_n(\Lambda_n
    K'(\C)).\]
  By the same logic as in that proof, this is true.  Thus $K'(F)$ is a flat
  cofibration; by Corollary~\ref{cor:flatY->proj}, 
  it is also a projective cofibration.
\end{proof}

\section{The $K$-theory of simplicial Waldhausen categories}\label{sec:k_simplicial_waldhausen}

The goal of this section is to show that $K'$ behaves well with respect
to the geometric realization of simplicial spectra.  In particular, our goal is
to prove the following theorem:
\begin{theorem} \label{thm:reedycofib} Let $\C_\dotp$ be a simplicial object in
  the category of Waldhausen categories.  Then, in the projective (resp. flat)
  model structure on symmetric spectra, $|K'(\C_\dotp)|$ (resp. $|K(\C_\dotp)|$)
  is cofibrant.
\end{theorem}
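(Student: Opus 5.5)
We prove Theorem~\ref{thm:reedycofib} by showing that the simplicial spectrum $[p]\mapsto K'(\C_p)$ (resp.\ $[p]\mapsto K(\C_p)$) is Reedy cofibrant in the projective (resp.\ flat) model structure. Geometric realization of a Reedy cofibrant simplicial object in a simplicial model category is cofibrant, since realization is a left Quillen functor from the Reedy structure; we take this as standard.

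Reedy cofibrancy means the latching maps $L_pK'(\C_\dotp)\rto K'(\C_p)$ are cofibrations, where $L_p$ is the colimit over the proper degeneracies $s_i\colon K'(\C_{p-1})\rto K'(\C_p)$. The first step is to identify this latching object with a subspectrum. Each $s_i$ comes from an exact functor $\C_{p-1}\rto\C_p$ that has a retraction $d_i$, so it is faithful and injective on objects. Hence $K(s_i)$ and $K'(s_i)$ are levelwise monomorphisms: in each spectral level and simplicial degree they are injections of sets of diagrams. The simplicial identities then give the usual Eilenberg--Zilber property: two degenerate diagrams that agree came from a common doubly degenerate diagram. Consequently the latching map is injective and identifies $L_p$ with the union $D_p=\bigcup_i s_i(K(\C_{p-1}))\subseteq K(\C_p)$, which is a subspectrum. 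The same holds for $K'$.

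Next we show that $D_p\rto K(\C_p)$ is a flat cofibration. We use Corollary~\ref{cor:incflatmor}; $K(\C_p)$ is flat by Proposition~\ref{prop:Kflat}. We must check
\[D_{p,n}\cap\Lambda_nK(\C_p)\subseteq\Lambda_nD_p.\]
A simplex on the left is a diagram in the image of some $s_i$ that is $j$-trivial for some $j$ (Lemma~\ref{lem:inLambda}). Write it as $s_iy$. Since $s_i$ has retraction $d_i$ and these functors preserve $j$-triviality, $y=d_is_iy$ is $j$-trivial too. Therefore $y$ is latched in $K(\C_{p-1})$, so $s_iy$ is latched to a simplex of $D_p$, as required. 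The argument for $K'$ is identical, using the Remark after Lemma~\ref{lem:K'flat} to describe $\Lambda_nK'$.

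For the projective case we apply Corollary~\ref{cor:flatY->proj}: $K'(\C_p)$ is projective by Corollary~\ref{cor:K'proj}, so a flat cofibration into it is automatically a projective cofibration. This step relies on the flat $K'$ construction, because $K(\C_p)$ itself is not projective. Functoriality of $\C\mapsto\C_{\tilde\Z}$ is what makes $K'(\C_\dotp)$ a simplicial spectrum at all.

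The main obstacle is the first step: justifying that the latching object is really the union $D_p$, and that $D_p$ satisfies the needed $\Lambda$ condition as a union of several images rather than a single one. This requires pullback compatibility $s_i(K)\cap s_j(K)=s_is_j(\cdots)$. We would verify this diagramwise from the simplicial identities, using the retractions $d_i$.
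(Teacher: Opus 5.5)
Your proposal is correct and follows essentially the same route as the paper. The paper proves Proposition~\ref{prop:reedycofib} by identifying the Reedy latching object with the union of the images of the $K'(s_i)$, checking the criterion of Corollary~\ref{cor:incflatmor} via Lemma~\ref{lem:inLambda}, and upgrading to a projective cofibration with Corollary~\ref{cor:flatY->proj}; it then concludes because realization in a simplicial model category takes Reedy cofibrant objects to cofibrant ones. Your use of the retraction $d_is_i=1$ to see that the preimage $y$ is itself $j$-trivial (or directly latched) makes explicit a step the paper dispatches with ``since $\kappa$ is a level cofibration.''
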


We begin with a review of Reedy cofibrancy of simplicial objects in a model
category. 

\begin{definition}[{\cite[Definition 15.2.5]{hirschhornbook}}]
   Let $I_n$ be the category whose objects are morphisms
  $[n] \rto [j]$ in $\Delta$ with $j < n$, with morphisms being given by
  commutative triangles.
  
  Let $\C$ be a cocomplete category, and let $X_\dotp\in s\C$ be a simplicial
  object in $\C$.   Then $X_\dotp$ induces a functor
  $\tilde X: I_n^\op \rto \C$ by $\tilde X([n] \rto [j]) = X_j$.  Define
  \emph{$n$-th Reedy-latching object} of $X_\dotp$, denoted $L^\R_nX_\dotp$, by
  \[L_n^\R X_\dotp \defeq \colim_{I_n^\op} \tilde X.\]
  There is a natural transformation $\tilde X \rto 1_{X_n}$ which induces a
  morphism $L_n^\R X_\dotp \rto X_n$.  
\end{definition}

\begin{example} \label{ex:latchbisimp}
  When $\C = \Set$ and $X_\dotp$ is a simplicial set, $L_n^\R X_\dotp$ is the
  subset of $X_n$ given by the degenerate simplices.  If $\C = s\Set$ and
  $X_\dotp$ is a bisimplicial set, then $L_n^\R X_\dotp$ is the subspace of
  $X_n$ given by the union of the images of $s_iX_{n-1}$ for $0 \leq i \leq
  n-1$.
  
  The reason that we can just take the union of the images is that all of the
  maps $s_i$ have left inverses, and intersect in predictable ways.  In
  particular, suppose that $s_iy = s_jz$ for some simplices $y$ and $z$.  Then
  if $i =j$ we must have $y=z$, by applying $d_i$ on the left.  If $i < j$ then
  we must have $y = s_{j-1}d_iz$ and $z = s_id_iz$.  Thus in the colimit
  defining $L_n^\R X_\dotp$, simplices whose image are equal in $X_n$ are
  already equal.
\end{example}

\begin{example} \label{ex:latchsp}
  Let $\C = \Sp$.  Then $L_n^\R X_\dotp$ is the subspectrum of $X_n$ which, at
  level $k$ is the union of the images of the spaces $s_i(X_{n-1})_k$.  (This
  uses Example~\ref{ex:latchbisimp} and the fact that colimits in $\Sp$ are
  levelwise.)
\end{example}

\begin{remark}
  The standard term for the Reedy latching object is simply ``latching object'',
  and it is denoted simply $L_n$; as this would lead to ambiguity in our
  notation, we have renamed it.
\end{remark}

\begin{definition}
  Let $\C$ be a model category. A morphism $X_\dotp \rto Y_\dotp$ in $s\C$ is a
  \emph{Reedy cofibration} if for all $n$ the induced morphism
  \[\kappa_n: L_n^\R Y_\dotp \cup_{L_n^\R X_{\dotp}} X_n \rto Y_n\]
  is a cofibration in $\C$.  When $X_\dotp$ is the initial object of $s\C$ then the
  unique morphism $\initial \rto Y_\dotp$ is a Reedy cofibration (i.e. $Y_\dotp$
  is \emph{Reedy cofibrant}) if for all $n$ the morphism $L_n^\R Y \rto Y_n$ is
  a cofibration in $\C$.

  When $\C= \Sp$ we will say that $X_\dotp \rto Y_\dotp$ is a \emph{flat Reedy
    cofibration} (resp. \emph{projective Reedy cofibration}) if for all $n$ the
  map $\kappa_n$ is a flat (resp. projective) cofibration.
\end{definition}

\begin{example}
  By the calculation in Example~\ref{ex:latchbisimp}, all bisimplicial sets are
  Reedy cofibrant, and more generally all pointwise monomorphisms of
  bisimplicial sets are Reedy cofibrations. (See \cite[Theorem
  IV.3.9]{goerssjardine} for an in-depth discussion of this example.)
\end{example}

The main goal of this section is to analyze the Reedy latching objects of the
$K$-theory of simplicial Waldhausen categories and thereby prove that the
$K$-theory of their homotopy colimits are cofibrant.  We begin with the
following characterization of the Reedy latching object.

The main technical result needed is the following:
\begin{proposition} \label{prop:reedycofib} Let $\C_\dotp$ be a simplicial
  object in the category of Waldhausen categories.  Then $K(\C_\dotp)$ is flat
  Reedy cofibrant and $K'(\C_\dotp)$ is projective Reedy cofibrant.
\end{proposition}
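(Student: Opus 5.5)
We need to show that for each $n$ the map $\kappa_n: L_n^\R K(\C_\dotp) \rto K(\C_n)$ is a flat cofibration, and similarly for $K'$ with projective cofibrations. By Example~\ref{ex:latchsp}, $L_n^\R K(\C_\dotp)$ is the subspectrum of $K(\C_n)$ whose level $q$ is the union $\bigcup_{i} s_i(K(\C_{n-1})_q)$. It is thus a level cofibration, being the inclusion of a subspectrum. The plan is to apply Corollary~\ref{cor:incflatmor} with $Y = K(\C_n)$, which is flat by Proposition~\ref{prop:Kflat}. For $K'$ we use $Y = K'(\C_n)$, which is flat by Lemma~\ref{lem:K'flat} and projective by Corollary~\ref{cor:K'proj}; Corollary~\ref{cor:flatY->proj} then upgrades flat cofibrations into $K'(\C_n)$ to projective ones. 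So everything reduces to verifying, for $X = L_n^\R K(\C_\dotp)$ (or its $K'$ analogue) and every level $q$,
\[X_q \cap \Lambda_q Y \subseteq \Lambda_q X.\]

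To check this, take a simplex $A$ of $K(\C_n)_q$, i.e.\ a $q$-dimensional diagram in $\C_n$ (with the extra $w$- and $S_\dotp$-simplicial data). Suppose $A$ lies in $s_i(K(\C_{n-1})_q)$ for some $i$, say $A = s_iB$, and suppose $A$ is latched in $Y$. By Lemma~\ref{lem:inLambda}, latched means $A$ is $j$-trivial for some $j$. The $s_i$ here is induced by the exact functor $s_i: \C_{n-1} \rto \C_n$, applied entrywise to the diagram. It is split by the exact functor $d_i$, so $B = d_iA$. Since $d_i$ is exact and preserves the distinguished zero, it takes $j$-trivial diagrams to $j$-trivial diagrams. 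Hence $B$ is $j$-trivial, so $B \in \Lambda_q K(\C_{n-1})$. Now $s_i$ commutes with the structure maps and the $\Sigma_q$-action, because it is a map of spectra. Therefore $A = s_iB$ is latched to $s_i$ of the simplex $B$ is latched to, and that simplex lies in $X$. This gives $A \in \Lambda_q X$, as required. The same argument works verbatim for $K'$: $d_i$ and $s_i$ induce functors $(\C_k)_{\tilde\Z}$ compatible with $u_\Z$, they preserve essentially nonrepeating diagrams, and the Remark after Lemma~\ref{lem:K'flat} gives the description of $\Lambda_q K'$.

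The main obstacle I expect is the description of latching within $X$. We must show that a simplex of $X$ which is latched in $Y$, via some tuple $(k, z, \delta, \tau)$, is latched through a simplex $z$ that lies in $X$ itself. The argument above handles this by transporting latching along $s_i$: the latching witness of $B$ is pushed forward by $s_i$. One subtlety is that $j$-triviality must be checked to be preserved by $d_i$. A single-object diagram could become one with repeated objects that fail to be "single-object" if $d_i$ sent a nonzero $A$ to a zero object that is not the distinguished one. For $K$ this is fine, because the distinguished zero is preserved and identities map to identities. For $K'$ one needs $d_i$ to map undistinguished zeroes appropriately, and I would check this against the construction of $\C_{\tilde\Z}$ as a product.

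Finally, for the consequence about geometric realizations stated in Theorem~\ref{thm:reedycofib}: a Reedy cofibrant simplicial object has cofibrant realization, by the standard result in \cite{hirschhornbook} that realization is left Quillen from the Reedy structure.
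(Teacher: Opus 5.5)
Your proposal is correct and follows the paper's proof. Both reduce, via Corollary~\ref{cor:flatY->proj} and Corollary~\ref{cor:incflatmor}, to the condition $X_q\cap\Lambda_qY\subseteq\Lambda_qX$, and then show that a $j$-trivial simplex in the image of $s_i$ is $s_i$ of a $j$-trivial, hence latched, simplex of $K(\C_{n-1})$.

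Your use of the splitting $d_is_i=\mathrm{id}$ makes explicit a step the paper only attributes to $\kappa$ being a level cofibration. You could shorten it further: applying the spectrum map $s_id_i$ directly to a latching witness $A=\tau\cdot\sigma(z,\delta)$ gives $A=\tau\cdot\sigma(s_id_iz,\delta)$ with $s_id_iz\in X$. This makes the detour through $j$-triviality, and your worry about undistinguished zeroes, unnecessary.
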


\begin{proof}
  We present the argument for the projective case, as the proof for the flat
  case is a minor simplification of the projective case (after replacing each
  instance of $K'$ with $K$).
  
  Our goal is to prove that the map $\kappa:L_n^\R K'(\C_\dotp) \rto K'(\C_n)$ is a
  projective cofibration.  By Corollary~\ref{cor:K'proj}, $K'(\C_n)$ is
  projective.  Thus by Corollary~\ref{cor:flatY->proj} it suffices to show that
  $\kappa$ is flat.  By Example~\ref{ex:latchsp}, $\kappa$ is a level cofibration, so
  by Corollary~\ref{cor:incflatmor} it suffices to check that
  \[\kappa_m((L_n^\R K'(\C_\dotp))_m) \cap \Lambda_mK'(\C_n)
    \subseteq \kappa_m(\Lambda_m(L_n^\R K'(\C_\dotp)).\]
  In particular, as discussed in Example~\ref{ex:latchsp}, $\kappa$ is the
  inclusion of those simplices that are in the image of some $K'(s_i)$,
  $s_i: \C_{n-1} \rto \C_n$.  By Lemma~\ref{lem:inLambda}, a simplex is inside
  $\Lambda_mK'(\C_n)$ if it is $j$-trivial for some $j$. 
  Thus a simplex is in the left-hand side if it is \emph{both} $j$-trivial and
  in the image of some $K'(s_i)$.  Since $\kappa$ is a level cofibration, this
  means that a simplex in the left-hand side must be the image under some
  $K'(s_i)$ of a $j$-trivial diagram in $K'(\C_{n-1})$---i.e. it is in
  $\Lambda_m(L_n^\R K'(\C_\dotp))$. 
\end{proof}

Given this result, we are ready to prove the theorem.

\begin{proof}[Proof of Theorem~\ref{thm:reedycofib}]
  By Proposition~\ref{prop:reedycofib}, $K'(\C_\dotp)$ (resp. $K(\C_\dotp)$) is
  Reedy cofibrant for the projective (resp. flat) model structure on symmetric
  spectra.

  By \cite[Theorem III.4.11]{schwedebook}, the category of symmetric spectra of
  simplicial sets, with either the flat or the projective stable model
  structure, are simplicial.  By \cite[Proposition VII.3.6]{goerssjardine}, the
  geometric realization of simplicial objects in a simplicial model category
  preserves cofibrations between Reedy cofibrant objects.  Thus $|K'(\C_\dotp)|$
  (resp. $|K(\C_\dotp)|$) is cofibrant, as desired.
\end{proof}

Lastly, we want to mimic the results of Proposition~\ref{prop:injflat} and
Corollary~\ref{cor:injproj}, for the simplicial Waldhausen category case.

\begin{proposition}
  Let $F_\dotp: \C_\dotp \rto \D_\dotp$ be a simplicial exact functor which is
  levelwise faithful and injective on objects.  Then $K(F_\dotp)$
  (resp. $K'(F_\dotp)$) is a Reedy cofibration in the Reedy model structure
  based on the flat (resp. projective) stable model structure on symmetric
  spectra.   Consequently, its realization is a cofibration.
\end{proposition}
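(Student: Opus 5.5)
We need to show that for every $n$ the map
\[\kappa_n\colon L_n^\R K'(\D_\dotp)\cup_{L_n^\R K'(\C_\dotp)} K'(\C_n)\rto K'(\D_n)\]
is a projective cofibration, and similarly for $K$ with flat cofibrations. We present the projective case; the flat case is the same argument with $K$ in place of $K'$ and without the final upgrade step. The realization statement then follows exactly as in the proof of Theorem~\ref{thm:reedycofib}: \cite[Proposition VII.3.6]{goerssjardine} says geometric realization in a simplicial model category preserves Reedy cofibrations.

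\textbf{Step 1: reduce to a flatness check.} The target $K'(\D_n)$ is projective by Corollary~\ref{cor:K'proj}. By Corollary~\ref{cor:flatY->proj} it therefore suffices to show that $\kappa_n$ is a flat cofibration. By Corollary~\ref{cor:incflatmor} this in turn reduces to two things: showing $\kappa_n$ is a level cofibration, and checking the intersection condition for $\Lambda_m$.

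\textbf{Step 2: $\kappa_n$ is a level cofibration.} Colimits in $\Sp$ are levelwise, and Reedy latching objects are unions of images of degeneracies (Example~\ref{ex:latchsp}). So at level $m$ the source of $\kappa_n$ is the pushout of simplicial sets
\[\textstyle\bigcup_i s_iK'(\D_{n-1})_m \;\cup\; K'(\C_n)_m,\]
glued along $\bigcup_i s_iK'(\C_{n-1})_m$. For this pushout to embed into $K'(\D_n)_m$ we need
\[K'(F_n)\big(K'(\C_n)_m\big)\cap \textstyle\bigcup_i s_iK'(\D_{n-1})_m \subseteq K'(F_n)\Big(\textstyle\bigcup_i s_iK'(\C_{n-1})_m\Big).\]
This is the main obstacle. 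Suppose a diagram $A$ in $\C_n$ satisfies $F_n A = s_i B$ for some diagram $B$ in $\D_{n-1}$. Applying $d_i$ gives
\[B = d_i s_i B = d_i F_n A = F_{n-1} d_i A,\]
so $B$ lies in the image of $\C_{n-1}$. Then
\[F_n(s_i d_i A) = s_i F_{n-1} d_i A = s_i B = F_n A.\]
Since $F_n$ is faithful and injective on objects, $K'(F_n)$ is levelwise monic, so $A = s_i d_i A$ lies in the image of a degeneracy. Thus $\kappa_n$ is monic levelwise.

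\textbf{Step 3: the $\Lambda_m$ condition.} We must show that a simplex of $K'(\D_n)_m$ which is $j$-trivial (Lemma~\ref{lem:inLambda}) and lies in the image of the pushout already comes from a latched simplex of the pushout. There are two cases.

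If the simplex lies in $s_iK'(\D_{n-1})_m$, argue as in Proposition~\ref{prop:reedycofib}. Write it as $s_iB$; applying $d_i$ shows $B$ is $j$-trivial, since $j$-triviality is a condition in the $S_\dotp$-directions and the simplicial structure maps of $\D_\dotp$ act objectwise on diagrams. So the simplex is $s_iB$ with $B$ latched, hence lies in $\Lambda_m$ of the latching object.

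If the simplex lies in $K'(\C_n)_m$, it is the image of a $j$-trivial diagram of $\C_n$, because $F_n$ is injective on objects and faithful (the argument of Proposition~\ref{prop:injflat}). Hence it lies in the image of $\Lambda_mK'(\C_n)$.

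In both cases the simplex lies in the image of $\Lambda_m$ of the pushout, since $\Lambda_m$ is computed from latched simplices and the structure maps of the pushout are the restrictions of those of $K'(\D_n)$. Combined with Step 1, this shows $\kappa_n$ is a flat, and hence projective, cofibration.
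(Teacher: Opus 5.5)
Your proposal is correct and follows essentially the same route as the paper: reduce to a flat cofibration via Corollary~\ref{cor:flatY->proj}, show $\kappa_n$ is a level cofibration using $d_is_i=1$ and the injectivity of $K'(F_n)$, then verify the intersection condition of Corollary~\ref{cor:incflatmor} separately for simplices in the image of a degeneracy and for simplices coming from $K'(\C_n)$. Your version also fills in points the paper treats as tautological: that $A=s_id_iA$, and that a $j$-trivial simplex $s_iB$ is latched to a simplex that is itself in the image of $s_i$. In addition, your citation of \cite[Proposition VII.3.6]{goerssjardine} handles the realization claim, which the paper's proof leaves implicit.
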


\begin{proof}
  We prove the projective case; the flat case is proved analogously.

  To prove that $K'(F_\dotp)$ is a Reedy cofibration we must show that the map
  \[\eta_n: L^\R_nK'(\D_\dotp) \cup_{L_n^\R K'(\C_\dotp)} K'(\C_n) \rto K'(\D_n)\]
  is a projective cofibration.  Since (by Corollary~\ref{cor:K'proj}) $K'(\D_n)$ is projective, by
  Corollary~\ref{cor:flatY->proj} it suffices to prove that this map is a flat
  cofibration. 
  
  We begin by analyzing its domain.  By Corollary~\ref{cor:injproj},
  $K(F_\dotp)$ is a level cofibration, so $K'(\C_n)$ can be considered to be a
  subspectrum of $K'(\D_n)$.  Since $K'(\D_\dotp)$ is Reedy cofibrant, as
  discussed in the proof of Proposition~\ref{prop:reedycofib},
  $L^\R_n K'(\D_\dotp)$ is also a subspectrum of $K'(\D_n)$; it is the
  subspectrum of those simplices in $K'(\D_n)$ which are in the image of
  $K'(s_i): K'(\D_{n-1} \rto \D_n)$ for some $i$.  Suppose that a simplex in
  $K'(\D_n)$ is in both $K'(\C_n)$ and $L^\R_n K'(\D_\dotp)$.  Since it is in
  $L^\R_n K'(\D_\dotp)$ it is in the image of some $K'(s_i)$; applying $K'(d_i)$
  to it gives a simplex inside $K'(\C_{n-1})$; thus the original simplex is in
  $L^\R_n K'(\C_\dotp)$.  In particular,as in the proof of Corollary~\ref{cor:incflatmor}, we see that $\eta_n$ is a level
  cofibration.

  To show that it is a flat cofibration we will use
  Corollary~\ref{cor:incflatmor}.  In particular it suffices to show the
  following two statements:
  \begin{enumerate}
  \item If a simplex in $(L^\R_n K(\D_\dotp))$ is $j$-trivial (as a simplex in
    $K(\D_n)_m$) then it was already in $\Lambda_m (L^\R_n K(\D_\dotp))$---i.e. that
    it is $j$-trivial.  This is tautologically true.
  \item If a simplex in $K'(\C_n)_m$ is $j$-trivial when considered as a simplex
    in $K'(\D_n)_m$ then it is $j$-trivial as a simplex in $K'(\C_n)_m$.  This
    is also true, since $m$-triviality depends only on the underlying
    categorical structure of each simplex.
  \end{enumerate}
  Taken together, these two statements give exactly the condition in
  Corollary~\ref{cor:incflatmor}, so we see that $\eta_n$ is flat, as desired.
\end{proof}

\section{Increasing spectra vs flat spectra}\label{sec:inc->flat}
In this section we give the technical ingredients and the main steps of the
proof that increasing spectra are flat.  Checking that the objects and maps are
well-defined is lengthy and annoying, and is thus postponed to
Section~\ref{sec:tech:inc->flat}. 
\subsection{An explicit model of the smash product} \label{app:smash}

Here we give an explicit construction of the smash product of two symmetric
spectra. 

 In \cite[Section I.5]{schwedebook}, Schwede gives the following
 construction of the $n$-th space of the smash product of $X$ and $Y$:
$(X\sma Y)_n$ is the coequalizer of the two maps
\[\alpha,\beta: \bigvee_{k+m=n-1} \Sigma_n^+ \sma_{\Sigma_k \times
    \Sigma_1 \times \Sigma_m} X_k \sma S^1 \sma Y_m \rto \bigvee_{k+m=n}
  \Sigma_n^+ \sma_{\Sigma_k\times \Sigma_m} X_k \sma Y_m,\]
where $\alpha_{k,m}: \Sigma_n^+ \sma_{\Sigma_k \times \Sigma_1\times \Sigma_m}
X_k \sma S^1 \sma Y_m \rto \Sigma_n^+ \sma_{\Sigma_{k+1}\times \Sigma_m}$
is induced by the structure map $X_k \sma S^1 \rto X_{k+1}$, and $\beta_{k,m}:
\Sigma_n^+ \sma_{\Sigma_k\times \Sigma_1 \times \Sigma_m} X_k \sma S^1 \sma Y^m
\rto \Sigma_n^+ \sma_{\Sigma_k \times \Sigma_{m+1}} X_k \sma Y_{1+m}$ is the map
induced by
\[S^1 \sma Y_m \rto^\cong Y_m \sma S^1 \rto^\sigma Y_{m+1} \rto^\cong Y_{1+m}.\]

We give a slightly different presentation of the same coequalizer.  Let
$\sT_n$ be the poset with
\begin{description}
\item[objects] pairs $(a,b)$ of nonnegative integers such that $a+b \leq n$,
  and
\item[morphisms] there is a unique morphism $(a,b) \rto (a',b')$ if $a \leq a'$
  and $b \leq b'$.
\end{description}
We define a functor $P_{n,X,Y}:\sT_n \rto s\Set$ by
\[(a,b) \rgoesto \Sigma_n^+ \sma_{\Sigma_a \times \Sigma_{n-a-b} \times
    \Sigma_b} X_a \sma S^{n-a-b} \sma Y_b.\]
A morphism $(a,b) \rto (a',b')$ goes to the morphism
\begin{align*}
  X_a \sma S^{n-a-b} \sma Y_b \rto^\cong X_a \sma S^{a'-a} \sma S^{n-a'-b'} \sma
  S^{b'-b} \sma Y_b &\rto^\cong (X_a \sma S^{a'-a}) \sma S^{n-a'-b'} \sma (Y_b \sma
  S^{b'-b})  \\ &\setlen{4em}{\rto^{\sigma \sma 1 \sma \sigma} X_{a'} \sma S^{n-a'-b'} \sma
  Y_{b+(b'-b)}} \\ &\setlen{6em}{\rto^{1\sma 1\sma \mathrm{tw}_{b,b'-b}} X_{a'} \sma S^{n-a'-b'}
  \sma Y_{b'}},
\end{align*}
where $\mathrm{tw}_{p,q}$ is the shuffle that swaps the first $p$ elements past the
last $q$ elements.

Let $\sT_n^{\leq 1}$ be the full subcategory of those pairs $(a,b)$ with $n-a-b
\leq 1$.

\begin{lemma}
  The inclusion $\sT_n^{\leq 1} \rto \sT_n$ is a final functor.
\end{lemma}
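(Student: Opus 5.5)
We use the standard criterion for finality: the inclusion $i:\sT_n^{\leq 1} \rto \sT_n$ is final exactly when, for every object $(a,b)\in \sT_n$, the comma category $(a,b)\downarrow i$ is nonempty and connected. Since $\sT_n$ is a poset, this comma category is just the full subposet
\[ U_{(a,b)} = \{(a',b') \in \sT_n^{\leq 1} : a \leq a',\ b \leq b'\}. \]
So it suffices to show that each $U_{(a,b)}$ is nonempty and connected as a poset, that is, connected under zigzags of comparable elements.

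\textbf{Nonemptiness.} If $a+b\geq n-1$ then $(a,b)$ itself lies in $U_{(a,b)}$. Otherwise $(n-b-1,b)$ satisfies $n-b-1\geq a$ and $(n-b-1)+b = n-1$, so it lies in $U_{(a,b)}$.

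\textbf{Connectedness.} The elements of $\sT_n^{\leq 1}$ fall into two kinds:
\begin{itemize}
\item the maximal elements $(c,n-c)$, with sum $n$;
\item the elements $(c,n-1-c)$, with sum $n-1$.
\end{itemize}
Each element $(c,n-1-c)$ lies below both $(c+1,n-1-c)$ and $(c,n-c)$. The elements of $U_{(a,b)}$ are then:
\begin{itemize}
\item the points $(c,n-c)$ with $a\leq c\leq n-b$;
\item the points $(c,n-1-c)$ with $a\leq c\leq n-1-b$.
\end{itemize}
For each $c$ with $a\leq c\leq n-1-b$, both $(c,n-c)$ and $(c+1,n-c-1)$ lie in $U_{(a,b)}$, and both lie above $(c,n-1-c)$. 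This gives a zigzag
\[(c,n-c) \leftarrow (c,n-1-c) \rto (c+1,n-1-c).\]
Chaining these zigzags connects all the maximal points $(c,n-c)$, $a\leq c\leq n-b$, to each other. Every sum-$(n-1)$ point in $U_{(a,b)}$ lies below one of these maximal points, so every point is joined to the chain and $U_{(a,b)}$ is connected. When $a+b=n$, $U_{(a,b)}=\{(a,b)\}$, which is trivially connected.

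The only step requiring any care is the index bookkeeping in the connectedness argument: one must check that the intermediate points $(c,n-1-c)$ and $(c+1,n-1-c)$ actually satisfy $a\leq c$ and $b\leq n-1-c$, which holds precisely in the range $a\leq c\leq n-1-b$ used above. Everything else is a direct verification.
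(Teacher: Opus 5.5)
Your proof is correct and matches the paper's argument. Both reduce finality to the under-categories $(a,b)\downarrow i$ being nonempty and connected, note that every sum-$(n-1)$ point lies below a maximal point $(c,n-c)$, and link consecutive maximal points by the zigzag $(c,n-c)\leftarrow(c,n-1-c)\rightarrow(c+1,n-1-c)$; the only difference is your nonemptiness witness (the paper uses $(a,n-a)$), which does not matter.
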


\begin{proof}
  It suffices to check that for all $(a,b)\in \sT_n$, the category
  $\C = (a,b)/\sT_n^{\leq 1}$ is connected.  This is the category of all pairs
  $(x,y)$ such that $n-1 \leq x+y \leq n$ and $x \geq a$, $y \geq b$.  This is
  nonempty because it contains the pair $(a,n-a)$.  Any $(x,y)$ with $x+y = n-1$
  has a morphism to $(x,n-x)$, so it suffices to check that all such pairs are
  connected.  However, the pair $(x,n-x)$ (with $x \leq n-b-1$) is connected to
  $(x+1,n-x-1)$ via the zigzag $(x,n-x) \lto (x,n-x-1) \rto (x+1,n-x-1)$.  Thus
  the category is connected, as desired.
\end{proof}

By definition,
\[\colim_{\sT^{\leq 1}_n} P_{n,X,Y} = \coeq(\alpha,\beta).\]
Thus Schwede's definition of $(X\sma Y)_n$ is exactly $\colim_{\sT^{\leq 1}_n}
P$.  In this paper we will instead use the definition
\begin{equation}\label{eq:explicit_smash}
(X\sma Y)_n = \colim_{\sT_n} P_{n,X,Y}.
\end{equation}
Since the inclusion of $\sT^{\leq 1}_n$ is final, these two are naturally
isomorphic, but our definition will be more useful to us in our analysis of
latching spaces.

We can therefore conclude that
  \begin{equation}\label{eq:lt_as_colim}
L^T_nX\cong\colim P_{n,X,\redS}.
\end{equation}

\begin{lemma}\label{lem:vn_via_colim}
The maps 
\begin{equation}\label{eq:first_map_in_vn}
\Sigma_n^+ \sma X_a \sma S^{n-a-b} \sma S^b\to X_n
\end{equation}
given by $(\gamma, x, \delta, \delta')\mapsto\gamma\cdot
\sigma(x,(\delta,\delta')) $ induce a map
\[\nu^T_n\colon L^T_nX\cong \colim_{\sT_n} P_{n,X,S}\to X_n\]
as in \eqref{eq:nu}.
\end{lemma}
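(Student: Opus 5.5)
To construct $\nu^T_n$ from the maps \eqref{eq:first_map_in_vn}, I would produce a cocone from the diagram $P_{n,X,\redS}$ to $X_n$. Three checks are needed: the maps descend to each vertex, they are compatible along the morphisms of $\sT_n$, and the resulting map agrees with $\nu^T_n$. Here $\redS_b = S^b$ for $b>0$ and $\redS_0 = *$, so every vertex with $b=0$ is a point.

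\textbf{Descending to each vertex.} Fix $(a,b)\in\sT_n$ with $b\ge 1$; the vertices with $b=0$ are a point, and the map there is the basepoint. The map $(\gamma,x,\delta,\delta')\mapsto \gamma\cdot\sigma(x,(\delta,\delta'))$ must descend to the orbit space $\Sigma_n^+\sma_{\Sigma_a\times\Sigma_{n-a-b}\times\Sigma_b}(X_a\sma S^{n-a-b}\sma S^b)$. That is, for $(\alpha,\beta,\beta')$ in the subgroup, the elements $(\gamma(\alpha,\beta,\beta'),x,\delta,\delta')$ and $(\gamma,\alpha x,\beta\delta,\beta'\delta')$ must have the same image. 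This follows from the $\Sigma_a\times\Sigma_{n-a}$-equivariance of the iterated structure map $\sigma\colon X_a\sma S^{n-a}\to X_n$, which is part of the definition of a symmetric spectrum, together with the fact that the subgroup $\Sigma_{n-a-b}\times\Sigma_b\subseteq\Sigma_{n-a}$ acts on $S^{n-a}=S^{n-a-b}\sma S^b$ by permuting the factors. Basepoints clearly go to the basepoint.

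\textbf{Compatibility along morphisms.} Take $(a,b)\to(a',b')$. Its image in $P$ applies $\sigma$ to $(x,\delta_1)$ with $\delta_1\in S^{a'-a}$ and applies the structure map of $\redS$ to $(\delta'',\delta_3)$ with $\delta_3\in S^{b'-b}$. For $\redS$ with $b>0$ this structure map is the identity $S^b\sma S^{b'-b}\to S^{b'}$; if $b=0$ the source vertex is a point, so there is nothing to check. It then twists by $\mathrm{tw}_{b,b'-b}$ and brings the sphere coordinates into the order $S^{a'-a}\sma S^{n-a'-b'}\sma S^{b'-b}$. Applying the proposed map to the result gives
\[\gamma\cdot\sigma\bigl(\sigma(x,\delta_1),(\delta_2,\mathrm{tw}(\delta'',\delta_3))\bigr).\]
Associativity of iterated structure maps, $\sigma(\sigma(x,u),v)=\sigma(x,(u,v))$, turns this into $\sigma$ applied to $x$ and a coordinate permutation of $(\delta_1,\delta_2,\delta_3,\delta'')$. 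Equivariance in the sphere variable then moves that permutation to the outside as an element of $\Sigma_n$ acting on $X_n$. I must check that this element is exactly the block permutation implicit in the isomorphism $S^{n-a-b}\cong S^{a'-a}\sma S^{n-a'-b'}\sma S^{b'-b}$ and the twist. Then both routes agree, given the identification $\Sigma_n^+\sma_{H}(-)$ at the two vertices and the convention that the map $P(a,b)\to P(a',b')$ is $\Sigma_n$-induced.

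I expect this bookkeeping of the permutation to be the main obstacle. It reduces to the statement that the block permutations of the sphere coordinates compose correctly, which is the usual coherence used to show that $X\sma\redS$ is well defined.

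\textbf{Identification with $\nu^T_n$.} Once the cocone exists, the universal property of $\colim_{\sT_n}P_{n,X,\redS}$ gives a $\Sigma_n$-equivariant map $L^T_nX\to X_n$, using \eqref{eq:lt_as_colim}. To see that it is $\nu^T_n$ from \eqref{eq:nu}, I would note that $\nu^T = 1\sma(\redS\to\mathbb{S})$. Under the explicit colimit model \eqref{eq:explicit_smash} this is induced vertexwise by $P_{n,X,\redS}\to P_{n,X,\mathbb{S}}$. Moreover, $X\sma\mathbb{S}\cong X$ is given on the vertices by exactly the formula $(\gamma,x,\delta,\delta')\mapsto\gamma\cdot\sigma(x,(\delta,\delta'))$: at the terminal vertices $(n,0)$ of the relevant components it is the identity, and the cocone maps are forced by compatibility. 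Composing these two facts shows that the cocone recovers $\nu^T_n$.
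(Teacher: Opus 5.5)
Your argument is correct and has the same skeleton as the paper's. The vertex maps descend to the orbit spaces by the $\Sigma_a\times\Sigma_{n-a}$-equivariance of the iterated structure map, and the cocone identity comes from associativity, $\sigma(\sigma(x,u),v)=\sigma(x,(u,v))$. The difference is how much of $\sT_n$ gets checked. The paper invokes the finality of $\sT_n^{\le 1}\hookrightarrow\sT_n$ to reduce to $\varphi_k\beta_k=\varphi_{k+1}\alpha_k$ for the two morphisms out of each vertex $(k,n-k-1)$. That is the one-line computation $\gamma\cdot\sigma(\sigma(x,\delta),\delta')=\gamma\cdot\sigma(x,(\delta,\delta'))$. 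You instead verify every morphism $(a,b)\to(a',b')$. You also explicitly identify the induced map with $1\sma(\redS\to\mathbb{S})$ followed by the unit isomorphism, which the paper leaves implicit.

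The finality reduction buys two things.

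First, the permutation bookkeeping you flag as the main obstacle never arises. For $\redS$ the structure map $\redS_b\sma S^{b'-b}\to\redS_{b'}$ is the identity, and $\mathrm{tw}_{b,b'-b}$ exactly undoes moving $S^{b'-b}$ past $\redS_b$. So the composite is literally $\gamma\cdot\sigma(x,(\delta_1,\delta_2,\delta_3,\delta''))=\gamma\cdot\sigma(x,(\delta,\delta''))$, with nothing to push into $\Sigma_n$.

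Second, your route uses transition maps of $P$ that split a middle sphere $S^{n-a-b}$ with $n-a-b\ge 2$ across several blocks. As $P$ is written, these are not compatible with the quotient by $\Sigma_{n-a-b}$. Take $n=3$ and the morphism $(0,1)\to(1,1)$. The simplices $[\gamma(1\,2),x,(t_1,t_2),u]$ and $[\gamma,x,(t_2,t_1),u]$ agree in $P(0,1)$. They are sent to $[\gamma(1\,2),\sigma(x,t_1),t_2,u]$ and $[\gamma,\sigma(x,t_2),t_1,u]$ in $P(1,1)=\Sigma_3^+\sma X_1\sma S^1\sma S^1$, where nothing is quotiented, so they differ.

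Your representative-level identity is still true. But for the all-morphisms check to be meaningful, you should quotient only by $\Sigma_a\times\Sigma_b$, dropping the middle $\Sigma_{n-a-b}$. This changes nothing on $\sT_n^{\le 1}$, and hence nothing in the colimit. The paper's reduction sidesteps the issue, since it only uses morphisms whose source has trivial middle group.
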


\begin{proof}
The maps in \eqref{eq:first_map_in_vn} descend to 
\begin{equation}\label{eq:vn_via_colim}
	\Sigma_n^+ \sma_{\Sigma_a \times \Sigma_{n-a-b} \times
    \Sigma_b} X_a \sma S^{n-a-b} \sma S^b\to X_n
\end{equation}
since $\sigma$ is equivariant.

  Define
\begin{equation}\label{eq:maps_to_check}
  \begin{aligned}
    \alpha_k &= P_{n,X,\redS}\big((k,1,n-k-1) \rto (k+1,n-k-1)\big) \\
    \beta_k &= P_{n,X,\redS}\big((k,1,n-k-1) \rto (k,n-k)\big).\\ 
  \end{aligned}
\end{equation}
  It suffices to check that
  $\varphi_k\beta_k = \varphi_{k+1}\alpha_k$ for all $k$.  (It is not necessary
  to check the other relations from $\sT_n$ because the inclusion of
  $\sT^{\leq 1}_n$ into $\sT_n$ is final.)  

  The two composites are 
\[(\gamma, x, \delta, \delta')\mapsto (\gamma, \sigma(x,\delta), \delta') 
\mapsto \gamma\cdot \sigma(\sigma(x,\delta),\delta')) \]
and 
\[(\gamma, x, \delta, \delta')\mapsto (\gamma, x, \sigma(\delta,
  \delta'))\mapsto\gamma\cdot \sigma(x,\sigma(\delta,\delta')), \]
which are equal, as desired.
\end{proof}

\begin{remark}
  The image of $L_n^TX$ inside $X$ is exactly $\Lambda_nX$, by
  Lemma~\ref{lem:nuL=lambda}.  Thus in particular, $\nu_n^T$ is a cofibration if
  and only if it induces an isomorphism $L_n^TX \rto \Lambda_n X$.
\end{remark}

\subsection{Flat spectra are increasing}\label{sec:flat_to_increasing}

In this subsection we prove that all flat spectra are increasing.

\begin{proposition} \label{prop:flat->inc}
  All flat spectra are increasing.
\end{proposition}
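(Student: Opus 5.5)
We assume $X$ is flat, so that each $\nu_n^T\colon L_n^TX \rto X_n$ is a monomorphism. We then verify the two conditions of Definition~\ref{def:increasing} by producing, in each case, elements of $L_n^TX$ that have the same image in $X_n$. Injectivity of $\nu_n^T$ then forces these elements to agree in the colimit $\colim_{\sT_n}P_{n,X,\redS}$ of \eqref{eq:lt_as_colim}. The work lies in translating equality in that colimit back into concrete information about simplices of $X$.

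For condition~(\ref{it:structure->monic}), consider the summand $P_{n,X,\redS}(n-1,1)=\Sigma_n^+\sma_{\Sigma_{n-1}\times\Sigma_1}X_{n-1}\sma S^1$. Its composite to $X_n$ sends $(1,y,\delta)$ to $\sigma(y,\delta)$, by Lemma~\ref{lem:vn_via_colim}. So the plan is as follows.
\begin{enumerate}
\item Show that the canonical map $X_{n-1}\sma S^1 \rto L_n^TX$, i.e.\ the inclusion of the identity-coset part of this summand into the colimit, is injective.
\item Compose with the monomorphism $\nu_n^T$ to conclude that $\sigma\colon X_{n-1}\sma S^1\rto X_n$ is monic.
\end{enumerate}
For the first step we use the degree~$0$ vanishing of $\redS$: any $\redS$-coordinate in level $0$ is the basepoint. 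With this, I expect the colimit relations that touch $(n-1,1)$ to be the following. Elements coming from $(a,b)$ with $a<n-1$ get identified with elements of $(n-1,1)$ only through the images of the maps $\alpha_k,\beta_k$. None of these relations identifies two distinct nonbasepoint elements $(1,y,\delta)$ and $(1,y',\delta')$ of the identity coset: both the $\alpha$-maps and the $\beta$-maps into this summand are injective on nonbasepoint simplices, since they are structure maps of $\redS$, which are identities.

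For condition~(\ref{it:latched}), suppose $\sigma(y_1)=\rho\cdot\sigma(y_2)$ with $\rho(n)\neq n$. Then the elements $(1,y_1)$ and $(\rho,y_2)$ of $P(n-1,1)$ have the same image under $\nu_n^T$, so they are equal in $L_n^TX$. Because $\rho(n)\ne n$, they lie in different cosets of $\Sigma_{n-1}\times\Sigma_1$. The plan is then:
\begin{enumerate}
\item Describe the equivalence relation generated in the colimit over $\sT_n$. Two elements in different cosets of the same vertex $(n-1,1)$ can only be identified through a zigzag passing through vertices $(a,b)$ with $b\ge 2$.
\item Show that the zigzag can be shortened to the form
\[(n-1,1)\lto(n-2,1,1)\rto(n-2,2)\lto(n-2,1,1)\rto(n-1,1),\]
or more directly to a single common preimage at the vertex $(n-2,2)$ of $\sT_n$.
\item Read off from this normal form a simplex $z\in X_{n-2}$ such that both $y_1$ and $y_2$ are latched to $z$, as required.
\end{enumerate}
The main tool for the normal form is Lemma~\ref{lem:d_latching}, which rewrites permutations into shuffle form after the coset representative is fixed. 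The decisive input is the structure of $\redS$: at $(n-2,2)$ the $\redS$-factor is $S^2$ with its $\Sigma_2$-action, and a swap there implements the transposition of the last two coordinates needed to move $n$ to $\rho(n)$.

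The main obstacle is step~(1) of the second plan: controlling the colimit equivalence relation in general, since zigzags could a priori wander through many vertices $(a,b)$. I would handle this by filtering by $a$. Every element of the colimit has a representative at a vertex with $a$ minimal, because the maps are structure maps and it is always possible to push forward. I would then argue that any two representatives with equal image have a common ``minimal-$a$'' ancestor; this is essentially a pushout-of-monomorphisms argument carried out inductively in $n$, using flatness in lower levels. The common ancestor lives at some $(a,b)$ with $a\le n-2$, and pushing it forward to $X_{n-2}$ gives the required $z$.
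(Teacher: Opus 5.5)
There are genuine gaps in both halves, and one step of your plan aims at something false.

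\textbf{Condition (1).} Your reason for injectivity of $X_{n-1}\sma S^1\rto L^T_nX$ is wrong on both counts.

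The only nontrivial map into $P(n-1,1)$ is $\alpha_{n-2}$. It is induced by the structure map $\sigma\colon X_{n-2}\sma S^1\rto X_{n-1}$ of $X$, not by a structure map of $\redS$.

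Moreover, $\beta_{n-2}$ is not injective: it identifies $(1,z,\epsilon,\xi)$ with $(t,z,\xi,\epsilon)$, where $t=(n-1\ n)$. Consequently $(1,\sigma(z,\epsilon),\xi)\sim(t,\sigma(z,\xi),\epsilon)$ in $L^T_nX$. So whenever $\sigma(z,\xi)=\sigma(z'',\xi'')$ with $\xi\neq\xi''$, the identity-coset elements $(1,\sigma(z,\epsilon),\xi)$ and $(1,\sigma(z'',\epsilon),\xi'')$ are identified.

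In fact, the composite $X_{n-1}\sma S^1\rto L^T_nX\rto X_n$ is $\sigma$ and $\nu^T_n$ is monic. Hence injectivity of your first map is \emph{equivalent} to monicity of $\sigma$, so this step is the whole problem rather than a reduction of it. The paper sidesteps it by quoting \cite[Proposition III.2.9, Corollary III.3.12]{schwedebook}: for flat $X$, the map $\lambda_X\colon S^1\sma X\rto\operatorname{sh}X$ is a flat, hence level, cofibration, and its components are the structure maps.

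\textbf{Condition (2).} The step carrying all the content, namely control of the equivalence relation on $\colim_{\sT_n}P_{n,X,\redS}$, is announced but not carried out. A ``common minimal-$a$ ancestor'' describes what is needed; it is not an argument for it.

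Write $y_i=(\bar y_i,\delta_i)$. The paper gets part of the way cheaply. The vertex $(n-1,1)$ is maximal in $\sT_n$, and every map into it from a vertex where $P$ is not a point factors through $(n-2,1)$. So an element of $P(n-1,1)$ outside the image of $P(n-2,1)$ is alone in its class, and injectivity of $\nu^T_n$ forces both $(1,y_1)$ and $(\rho,y_2)$ into that image.

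This only shows that each $\bar y_i$ is latched to \emph{some} simplex of $X_{n-2}$. To compare the two using monicity of $\sigma_2$, as the paper does, the preimages must be compatible: $\bar y_1$ must be exhibited as latched via the $(n-2,1)$-shuffle with value $\rho(n)$ at $n-1$, and $\bar y_2$ via the one with value $\rho^{-1}(n)$ at $n-1$. Producing such preimages is exactly the control you have not supplied.

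One way to get it is to induct on $n$.
\begin{itemize}
\item If $X$ is increasing below level $n$, roots exist there, and the map $\mu_n\colon L^T_nX\rto\LP_nX$ of Section~\ref{sec:tech:inc->flat} is defined, since it only involves $X_k$ for $k<n$.
\item Since $\nu^T_n$ is monic, $(1,y_1)$ and $(\rho,y_2)$ are equal in $L^T_nX$, hence have the same image under $\mu_n$.
\item That image consists of the same root $r$ and the same shuffle, whose set of suspension coordinates contains both $n$ and $\rho(n)$.
\item Suspending $r$ in the other suspension coordinates produces the lower simplex.
\end{itemize}
The case $\rho=1$ of the same argument gives condition (1) as well.

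Finally, step (3) cannot succeed as literally stated, i.e.\ with a single $z$ to which $\bar y_1$ and $\bar y_2$ are both latched via shuffles as in Definition~\ref{def:latch}. Take $X=\mathbb{S}$, which is projective. Write simplices of $S^k=(S^1)^{\sma k}$ as tuples, let $a,b,c,d$ be four distinct non-basepoint $4$-simplices of $S^1$, and let $\rho=(1\ 3\ 4)$, acting by moving the $i$-th factor to position $\rho(i)$. Then
\[\sigma\big((d,b,a),c\big)=(d,b,a,c)=\rho\cdot\sigma\big((a,b,c),d\big),\]
yet $(d,b,a)$ is latched only to $(d,b)$, $(d,a)$, $(b,a)$, and $(a,b,c)$ only to $(a,b)$, $(a,c)$, $(b,c)$. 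What a zigzag through $(n-2,2)$ actually yields is that $\bar y_1$ and $\bar y_2$ are latched to two simplices in the same $\Sigma_{n-2}$-orbit (here $(b,a)$ and $(a,b)$). That is the conclusion you should aim for.
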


\begin{proof} 
  Let $X$ be a flat spectrum.  We must prove, for all $n \geq 1$, that
  \begin{enumerate}
  \item 
  the structure map $\sigma: X_{n-1}\sma S^1 \rto X_n$ is monic, and
\item 
  for any $\rho\in \Sigma_n$ such that $\rho(n) \neq n$, and every
  $y,y'\in X_{n-1}$, if there exist $\delta,\delta'\in S^1$ such that 
  $\sigma_1(y,\delta) = \rho\cdot \sigma_1(y',\delta')$ then there exists a
  simplex $z\in X_{n-2}$ such that $y$ and $y'$ are latched to $z$.
  \end{enumerate}
  By \cite[Proposition III.2.9]{schwedebook}, for any flat spectrum $X$, the
  morphism $\lambda_X: S^1 \sma X \rto \operatorname{sh}X$ is a flat
  cofibration.  In particular, by \cite[Corollary III.3.12]{schwedebook} this
  implies that it is a level cofibration.  The components of this morphism are
  exactly the structure maps of $X$, so this implies that the structure maps of
  $X$ are monic, as desired.  This implies \eqref{it:structure->monic}.

  Now consider \eqref{it:latched}.   Using \eqref{eq:explicit_smash},
  $L^T_nX=\colim P_{n,X,\redS}$.  The fact that $X$ is flat is the statement
  that for all $n$, the map
  \begin{equation}\label{eq:nu_T_again}\nu_n^T:\colim
    P_{n,X,\redS} \rto X_n\end{equation}
  induced by the structure maps of $X$ is monic.

  Suppose that
  $x\in \sigma(X_{n-1}\sma S^1) \cap \rho\cdot \sigma(X_{n-1}\sma S^1)$.  This
  means that $x = \sigma(y,\delta) = \rho\cdot \sigma(y',\delta')$ for some
  $\rho$ such that $\rho(n) \neq n$.  Write $\rho = \gamma(\rho',1)$ where
  $\gamma$ is an $(n-1,1)$-shuffle and
  $(\rho',1)\in \Sigma_{n-1}\times \Sigma_1$.   (Since $\rho(n)\neq n$, $\gamma$ is nontrivial.) Then
  $\rho\cdot \sigma(y',\delta') = \gamma\cdot \sigma(\rho'\cdot y',\delta')$; in
  particular, by taking $y''=\rho'y'$, we have $x=\sigma(y,\delta)=\gamma \cdot \sigma (y'',\delta')$. 
Note that $(1,y,\delta)$ and
  $(\gamma,y'',\delta')$ represent distinct simplices in $P_{n,X,\redS}(n-1,1)$,
  and the component
  \[f:P_{n,X,\redS}(n-1,1) \rto X_n\]
  of $\nu_n^T$ is not monic.

  Since the only atomic morphism in
  $\sT_n$ 
  with codomain $(n-1,1)$ is the morphism $(n-2,1) \rto (n-1,1)$, and the
  morphism in \eqref{eq:nu_T_again}
  is monic, this implies that for all $m \geq 0$, the induced map
  \[P_{n,X,\redS}(n-1,1)_m\smallsetminus P_{n,X,\redS}(n-2,1,1)_m \rto (X_n)_m\]
  is monic.  In particular, we see that at least one of the simplices
  $(1,y,\delta)$ and $(\gamma,y'',\delta')$ must be in the image of
  $P_{n,X,\redS}(n-2,1)_m \rto P_{n,X,\redS}(n-1,1)_m$.  Since the image is
  $\Sigma_n$-equivariant, this implies that both  $(1,y,\delta)$ and $(\gamma,y'',\delta')$ must be in the image.

  In particular, there exist
  $(\tau,z,\delta_z,\xi_z),(\tau',z',\delta_z',\xi_z')\in P_{n,X,\redS}(n-2,1)$ such that,
  under the induced map $P_{n,X,\redS}(n-2,1) \rto P_{n,x,\redS}(n-1,1)$ we have
  $(1,y,\delta) = (\tau,\sigma(z,\delta_z),\xi_z)$ and
  $(\gamma,y'',\delta') = (\tau',\sigma(z',\delta_{z}'),\xi_z')$.  In particular, we see that
  $\tau = 1$, $\tau' = \gamma$, $\delta=\xi_z$ and $\delta'=\xi_z'$, and we have
  \[\sigma(\sigma(z,\delta_z),\delta) =
    \gamma\cdot\sigma(\sigma(z',\delta_z'),\delta').\]
  Write $\gamma = (\rho'\times 1)(n-1\ 1)$, where $\rho'$ is an $(n-2,1)$-shuffle.
  The above equations shows that
  \[\sigma_2(z,(\delta_z,\delta)) = \sigma_2(\rho' \cdot z',(\delta',\delta'_z)).\]
  Since $\sigma_2$ is monic, this implies that $z = \rho'\cdot z'$; in
  particular, we see that  $y$ and $y'$ are both latched to $z$.  This proves that
  $X$ satisfies \eqref{it:latched}.
\end{proof}

\subsection{Roots}

In an increasing symmetric spectrum, latched and unlatched simplices behave
similarly to degenerate and nondegenerate simplices in a simplicial set, in that
every latched simplex is latched to a \emph{unique} unlatched simplex, which we
call its \emph{root}.  The goal of this section is to prove this fact.

We begin by showing that latching is a transitive relation on simplices.

Latching is a transitive relation on simplices of $X$.

\begin{lemma}\label{lem:latch_transitive}
  Latching is transitive.  In other words, let $k < m < n$, and let $z\in X_k$,
  $y\in X_m$ and $x\in X_n$. Let $\delta\in S^{m-k}$ and $\epsilon\in S^{n-m}$.
  If $y$ is $(\delta,\tau)$-latched to $z$ and $x$ is $(\epsilon,\rho)$-latched
  to $y$ then $x$ is $(\delta\sma \epsilon, \rho(\tau\times 1))$-latched to $z$.
\end{lemma}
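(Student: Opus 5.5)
The plan is to unwind the two latching equations and substitute one into the other, using only that the structure maps of a symmetric spectrum are associative and equivariant. By Definition~\ref{def:latch}, the hypotheses say
\[y = \tau\cdot \sigma_{m-k}(z,\delta), \qquad x = \rho\cdot \sigma_{n-m}(y,\epsilon),\]
where $\tau\in\Sigma_m$ is an $(k,m-k)$-shuffle and $\rho\in\Sigma_n$ is an $(m,n-m)$-shuffle. Substituting the first into the second gives
\[x = \rho\cdot\sigma_{n-m}\bigl(\tau\cdot\sigma_{m-k}(z,\delta),\epsilon\bigr).\]

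First I will use the $\Sigma_m\times\Sigma_{n-m}$-equivariance of $\sigma_{n-m}\colon X_m\sma S^{n-m}\rto X_n$. Since $\tau$ acts on $X_m$ and trivially on $\epsilon$, this lets us pull $\tau$ out as $(\tau\times 1)\in\Sigma_n$:
\[\sigma_{n-m}(\tau\cdot w,\epsilon) = (\tau\times 1)\cdot\sigma_{n-m}(w,\epsilon).\]
Second, I will use the associativity of the iterated structure maps, namely $\sigma_{n-m}(\sigma_{m-k}(z,\delta),\epsilon)=\sigma_{n-k}(z,\delta\sma\epsilon)$. This is the same identity already used in the proof of Lemma~\ref{lem:vn_via_colim}, and it holds by the definition of $\sigma_{n-k}$ as the iterate of the one-step structure maps. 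Combining the two steps gives
\[x=\rho(\tau\times 1)\cdot \sigma_{n-k}(z,\delta\sma\epsilon),\]
which is the desired latching equation with permutation $\rho(\tau\times1)$.

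The only real subtlety is that Definition~\ref{def:latch} asks the permutation to be a shuffle. I therefore need to check that $\rho(\tau\times1)$ is an $(k,n-k)$-shuffle:
\begin{itemize}
\item On $1,\ldots,k$, the map $\tau$ is increasing with values in $\{1,\ldots,m\}$, and $\rho$ is increasing on $\{1,\ldots,m\}$, so the composite is increasing there.
\item On $k+1,\ldots,n$, the map $\tau\times1$ sends $k+1,\ldots,m$ increasingly into $\{1,\ldots,m\}$ and fixes $m+1,\ldots,n$.
\end{itemize}
The second point is the step I expect to be the main obstacle. Monotonicity of $\rho(\tau\times 1)$ on $\{k+1,\ldots,n\}$ requires the values $\rho\tau(m)$ and $\rho(m+1)$ to be in order, and this fails in general because $\rho$ interleaves its two blocks. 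For example, with $\tau=1$ and $\rho$ a nontrivial shuffle, $\rho(m)$ can exceed $\rho(m+1)$.

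If this happens, I would use Lemma~\ref{lem:d_latching}(1) to factor $\rho(\tau\times1)=\gamma'(\alpha,\beta)$, with $\gamma'$ a genuine $(k,n-k)$-shuffle and $(\alpha,\beta)\in\Sigma_k\times\Sigma_{n-k}$. Equivariance of $\sigma_{n-k}$ then gives $x=\gamma'\cdot\sigma_{n-k}(\alpha z,\beta(\delta\sma\epsilon))$, exactly as in the proof of Lemma~\ref{lem:Lambda_well_defined}. So $x$ is shuffle-latched to $\alpha z$ via the simplex $\beta(\delta\sma\epsilon)$.

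Since the statement as written asserts the formula $\rho(\tau\times1)$ literally, I would present the main computation with that permutation, read "latched" there as allowing an arbitrary permutation, and add the above normalization remark.
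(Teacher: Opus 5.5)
Your main computation is the paper's proof: substitute $y=\tau\cdot\sigma_{m-k}(z,\delta)$ into $x=\rho\cdot\sigma_{n-m}(y,\epsilon)$, pull $\tau$ out as $\tau\times 1$ by equivariance, and collapse the iterated structure map by associativity. The paper does not address the shuffle condition at all. You are right that $\rho(\tau\times 1)$ is in general not a $(k,n-k)$-shuffle; your example works, since $\rho\neq 1$ forces $\rho(m)>m\geq\rho(m+1)$.

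Your repair, however, does not prove transitivity. Factoring with Lemma~\ref{lem:d_latching}(1), you only conclude that $x$ is shuffle-latched to $\alpha z$, not to $z$. The missing observation is that $\alpha=1$. By your own first bullet, $\rho(\tau\times1)$ is increasing on $\{1,\ldots,k\}$. The shuffle $\gamma'$ is also increasing there, and both send $\{1,\ldots,k\}$ onto the same set, so they agree on it. Equivalently, Lemma~\ref{lem:d_latching}(2), applied to the $(m,n-m)$-shuffle $\rho$ and the $(k,m-k)$-shuffle $\tau$, directly gives $\rho(\tau\times 1)=\gamma'(1,\beta)$ with $\gamma'$ a $(k,n-k)$-shuffle and $\beta\in\Sigma_{n-k}$. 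Hence
\[
x=\gamma'\cdot\sigma_{n-k}\bigl(z,\beta\cdot(\delta\sma\epsilon)\bigr),
\]
so $x$ is $(\gamma',\beta\cdot(\delta\sma\epsilon))$-latched to $z$ itself. This is the correct form of the lemma. It is also the form the paper implicitly uses when it composes latchings, via the normalization $\gamma(\tau_x,1)=\gamma'(1,\rho)$ in Lemma~\ref{lem:define_phi} and Lemma~\ref{lem:equiv_unlatched}.
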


\begin{proof}
  We have
  \begin{align*}
    x &= \rho\cdot \sigma_{n-m}(y,\epsilon) = \rho\cdot \sigma_{n-m}(\tau\cdot
        \sigma{m-k}(z,\delta), \epsilon) \\
    &= \rho(\tau\times 1) \sigma_{n-m}(\sigma_{m-k}(z,\delta),\epsilon) =
      \rho(\tau\times 1) \sigma_{n-k}(z, \delta\sma\epsilon),
  \end{align*}
  as desired.
\end{proof}

For a simplex $x$ in $X_n$,   let $\mathcal{P}_x$ be the poset of simplices $y$ that $x$ is latched to, under
  the ordering $y < y'$ if $y'$ is latched to $y$.  This poset is graded with
  $|y| = k$ if $y\in X_k$, and if $y \leq y'$ then $|y| = |y'|$ if and only if
  $y = y'$.

\begin{lemma}\label{lem:pre:rootunique}
If simplices $y,y'$ have an upper bound
  $w$ in $\mathcal{P}_x$  then there exists a lower bound $z$ with $\max(|w|-|y|,|w|-|y'|) \geq
  \max(|y|-|z|,|y'|-|z|)$. 
\end{lemma}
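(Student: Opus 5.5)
We argue by induction on $d=\max(|w|-|y|,|w|-|y'|)$, under the standing assumption of this subsection that $X$ is increasing. Condition \eqref{it:latched} of Definition~\ref{def:increasing} then supplies the one-step case, and transitivity (Lemma~\ref{lem:latch_transitive}) assembles longer chains. Here $y\le w$ and $y'\le w$ in $\mathcal{P}_x$, so $w=\tau\cdot\sigma(y,\delta)=\tau'\cdot\sigma(y',\delta')$ for shuffles $\tau,\tau'$ and simplices $\delta\in S^{|w|-|y|}$, $\delta'\in S^{|w|-|y'|}$. The trivial cases come first. If $y=w$ or $y'=w$, we may take $z$ to be the smaller of the two, since then $y\le y'$ or $y'\le y$. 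If $y\le y'$, then $z=y$ works, because $|y'|-|y|\le |w|-|y|$.

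For the inductive step, we factor each latching through level $|w|-1$. Writing $\sigma_k=\sigma_1\circ(\sigma_{k-1}\sma 1)$, we get $w=\tau\cdot\sigma_1(u,\epsilon)$ with $u=\sigma_{|w|-1-|y|}(y,\bar\delta)\in X_{|w|-1}$ latched to $y$, and similarly $w=\tau'\cdot\sigma_1(u',\epsilon')$ with $u'$ latched to $y'$. Hence $\sigma_1(u,\epsilon)=\tau^{-1}\tau'\cdot\sigma_1(u',\epsilon')$. There are two cases.

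\emph{Case $\rho=\tau^{-1}\tau'$ fixes $|w|$.} Write $\rho=(\rho_0,1)$. Monicity of $\sigma_1$ (condition \eqref{it:structure->monic}) gives $u=\rho_0u'$. Then $y$ and $y'$ have the common upper bound $u$ in $\mathcal{P}_x$ (using Lemma~\ref{lem:d_latching} to re-express $\rho_0$ composed with the shuffle as a shuffle times a symmetry of $y'$), and $d$ drops by one.

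\emph{Case $\rho(|w|)\neq|w|$.} Condition \eqref{it:latched} produces $v\in X_{|w|-2}$ to which both $u$ and $u'$ are latched. Now $y$ and $v$ have the common upper bound $u$, with parameter $d-1$, so induction gives a lower bound $z_1$ of $y$ and $v$ with $\max(|y|-|z_1|,|v|-|z_1|)\le d-1$. Likewise $y'$ and $v$ give a lower bound $z_2$. Then $z_1$ and $z_2$ have the common upper bound $v$, with parameter $\le d-1$, so one more application of induction gives a common lower bound $z$.

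The main obstacle is the degree bookkeeping in this second case: the final $z$ must satisfy $|y|-|z|\le d$ and $|y'|-|z|\le d$, and the nested applications of induction could a priori push $z$ too low. The plan is to strengthen the induction hypothesis so that it also controls the depth below $v$. Concretely, $|v|-|z|\le d-1$, combined with $|v|=|w|-2$, should yield $|y|-|z|\le |w|-|z|-\ldots$. If that direct estimate fails, the fallback is to use that latching has a grading and to choose $z$ at the maximal possible level, namely $\min(|y|,|y'|)-d$ or higher. A secondary technical point is the shuffle rearrangement via Lemma~\ref{lem:d_latching}(2), which is needed to keep every latching datum in shuffle form. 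This is routine but fiddly.
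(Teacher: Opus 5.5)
\textbf{There is a genuine gap: the degree bound in your second case is never established.} After producing $z_1$, $z_2$ and $z$, you never check that $\max(|y|-|z|,|y'|-|z|)\le d$, and that bound is the whole content of the lemma. The inequality you begin to write, $|v|-|z|\le d-1$, is not something the inductive hypothesis gives you, since the last application only bounds $|z_i|-|z|$. Your fallback of choosing $z$ ``at the maximal possible level'' is circular: the existence of a common lower bound at a prescribed level is exactly what is being proved.

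No strengthening of the hypothesis is needed. The missing observation is that the stated bound is equivalent to
\[|z|\ \ge\ |y|+|y'|-|w|.\]
Indeed $(|y|-|z|)-(|y'|-|z|)=(|w|-|y'|)-(|w|-|y|)$. So the larger difference on the left sits opposite the larger difference on the right, and comparing those two is exactly this inequality.

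In this form your three applications compose:
\begin{enumerate}
\item For $d\ge 2$, the pair $y,v\le u$ has parameter $\max(|w|-1-|y|,1)\le d-1$. This gives $|z_1|\ge |y|+|v|-|u|=|y|-1$, and likewise $|z_2|\ge|y'|-1$.
\item Hence $|v|-|z_1|\le|w|-|y|-1$ and $|v|-|z_2|\le|w|-|y'|-1$, so the pair $z_1,z_2\le v$ has parameter at most $d-1$.
\item The last application then gives $|z|\ge|z_1|+|z_2|-|v|\ge|y|+|y'|-|w|$, as required.
\end{enumerate}
For $d=1$ you have $u=y$ and $u'=y'$, and $z=v$ works directly. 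This is the same bookkeeping the paper carries out through the asymmetric bounds it records at each step.

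\textbf{There is also a slip in your choice of $u$.} With $u=\sigma(y,\bar\delta)$ you have $w=\tau\cdot\sigma_1(u,\epsilon)$. But $\tau$ is a $(|y|,|w|-|y|)$-shuffle, generally not an $(|w|-1,1)$-shuffle. So $w$ need not be latched to $u$, and $u$ need not lie in $\mathcal{P}_x$. In your first case, $u=\rho_0u'$ is latched only to a $\Sigma_{|y'|}$-translate of $y'$, as you yourself note, so it is not an upper bound for $y'$.

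To fix this, factor $\tau=\gamma(\alpha,1)$ with $\gamma$ an $(|w|-1,1)$-shuffle, using Lemma~\ref{lem:d_latching}(1). Since $\tau$ and $\gamma$ are increasing on the relevant blocks, $\alpha$ is a $(|y|,|w|-1-|y|)$-shuffle. Then $u:=\alpha\cdot\sigma(y,\bar\delta)$ satisfies $y\le u\le w$, so $u\in\mathcal{P}_x$ by transitivity. Do the same for $\tau'=\gamma'(\alpha',1)$ and $u'$, so that $\sigma_1(u,\epsilon)=\gamma^{-1}\gamma'\cdot\sigma_1(u',\epsilon')$. The case split becomes:
\begin{enumerate}
\item $\gamma=\gamma'$: monicity gives $u=u'$, which is a genuine common upper bound of $y$ and $y'$.
\item $\gamma\neq\gamma'$: since an $(|w|-1,1)$-shuffle is determined by where it sends $|w|$, we get $\gamma^{-1}\gamma'(|w|)\ne|w|$, and Definition~\ref{def:increasing}\eqref{it:latched} applies.
\end{enumerate}

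With both repairs your route is a valid alternative to the paper's. You invoke the one-step condition once, at level $|w|-1$, and split on whether the top-level shuffles agree. The paper instead chains down to $y_{M-1},y'_{M-1}$ and splits on whether $|w|-|y|=|w|-|y'|$. It handles the unequal case by two inductive calls, and the equal case by the one-step condition followed by the unequal case twice.
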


\begin{proof}
 We prove this by induction on
  $\max(|w|-|y|,|w|-|y'|)$.

  If  $\max(|w|-|y|,|w|-|y'|)=0$, so that $w = y =y'$ then the claim trivially holds with $z =
  w$.

  Suppose  $\max(|w|-|y|,|w|-|y'|)=1$.  Then $z$ exists by
  Definition~\ref{def:increasing}\eqref{it:latched}.

  Now suppose that the claim holds for all pairs with this measure at most
  $M-1$, and consider a case where $|w|-|y| = M$ and $|w|-|y'| \leq M$.  Let
  $y = y_0 < y_1 <\cdots < y_{M} = w$ and
  $y' = y_0' < \cdots < y'_{|w|-|y'|} = w$, which exist by the definition of
  latching.  

First, suppose that $|w|-|y'| < M$.  Then the inductive hypothesis
  applies to $y_{M-1},y'$, so there exists $z' \leq y_{M-1},y'$ with
  $|y_{M-1}|-|z'|$ and $|y'|-|z'| < M$; in particular, $|y'|-|z'| = 1$ and
  $|y_{M-1}|-|z'| = |w|-|y'| < M$.  
\[\begin{tikzpicture}
\node (w) at (.5,0) {$w$};
\node (y0) at (0,.375) [rotate =-45] {$\leq$};
\node (y1) at (-.5,.75) {$y_{M-1}$};
\node (y2) at (-1.25,.75) {$\leq$};
\node (y3) at (-1.75,.75) {$\ldots$};
\node (y4) at (-2.25,.75)  {$\leq$};
\node (y5) at (-2.75,.75) {$y_{1}$};
\node (y6) at (-3.25,.75)  {$\leq$};
\node (y7) at (-3.75,.75) {$y_{0}$};
\node (y8) at (-4.25,.75)  {$=$};
\node (y9) at (-4.75,.75) {$y$};
\node (y10) at (-5.25,.375) [rotate =45] {$\leq$};
\node (z0) at (-.25,-.375) [rotate =45] {$\leq$};
\node (z1) at (-.5,-.75) {$y'_{|y|-|w|}$};
\node (z2) at (-1.25,-.75) {$\leq$};
\node (z3) at (-1.75,-.75) {$\ldots$};
\node (z4) at (-2.25,-.75)  {$\leq$};
\node (z5) at (-2.75,-.75) {$y'_{0}$};
\node (z6) at (-3.25,-.75)  {$\leq$};
\node (z7) at (-3.75,-.75) {$y'$};
\node (z8) at (-4.25,-.75)  {$\leq$};
\node (z9) at (-4.75,-.75) {$z'$};

\node (z10) at (-5.25,-.375) [rotate =-45] {$\leq$};
\node (w1) at (-5.75,0) {$z$};
\draw[dotted](z9)--(y1) node[midway,rotate =25]{$\leq$};
\end{tikzpicture}
\]
But then
  $\max(|y_{M-1}|-|y|,|y_{M-1}|-|z'|) < M$, so the inductive hypothesis applies here;
  thus there exists a lower bound $z$ for $y$ and $z'$ such that
  $\max(|y|-|z|,|z'|-|z|) \leq \max(|y_1|-|y|,|y_1|-|z'|) < M$.  Thus $z$ is
  a lower bound for $y$ and $y'$ with $|y|-|z| < M$ and $|y'|-|z| \leq M$, as
  desired.

  Now suppose that $|w|-|y'| = M$.  By the base case applied to $y_{M-1}$ and
  $y'_{M-1}$ there exists $z'$ with $|z'| = |w|-2$ which is a lower bound for
  both.  Then the previously-handled case applies to the pair $y$ and $z'$
  (which have a common upper bound of $y_{M-1}$, giving a lower bound $z''$ with
  $|y|-|z''| = 1$ and $|z'|-|z''| \leq M-1$).  Then the previous case also applies
  to the pair $z''$ and $y'$ (which have a common upper bound of $y'_{M-1}$,
  giving a lower bound $z$ with $|z''|-|z| \leq M-1$ and $|y'|-|z| \leq M$).
  Since $|y|-|z''| = 1$, this implies that $z$ is a lower bound with $|y|-|z|
  \leq M$, as well, as desired.
\end{proof}

\begin{lemma} \label{lem:rootunique}
  If $X$ is increasing then for every simplex $x\in X_n$ there exists a unique
  maximal $k$, together with unique simplex $y\in X_{n-k}$ and $(\tau, \delta)$
  such that $x$ is $(\tau,\delta)$-latched to $y$.
\end{lemma}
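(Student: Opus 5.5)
Existence is immediate. The poset $\mathcal{P}_x$ contains $x$ itself (taking $k=0$ as the trivial latching). Its grading $|y|$ is bounded below by $0$. So there is a maximal $k$ with $x$ latched to some $y\in X_{n-k}$, and such a $y$ is a minimal element of $\mathcal{P}_x$, i.e. it is unlatched. By Lemma~\ref{lem:latch_transitive}, anything $y$ is latched to would also lie in $\mathcal{P}_x$ with smaller grading.

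Uniqueness of $y$ comes from Lemma~\ref{lem:pre:rootunique}. Suppose $y,y'$ are both minimal in $\mathcal{P}_x$. They have the common upper bound $w=x$, so the lemma gives a lower bound $z\in\mathcal{P}_x$ with $z\leq y$ and $z\leq y'$. Minimality forces $z=y$ and $z=y'$, because $z\le y$ with $|z|=|y|$ implies $z=y$. Hence $y=y'$. It also follows that every minimal element has the same grading, so the maximal $k$ is unique and the root $y$ is well defined.

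The remaining point is uniqueness of $(\tau,\delta)$ given $y$. Suppose $\tau\cdot\sigma_k(y,\delta)=x=\tau'\cdot\sigma_k(y,\delta')$ with $\tau,\tau'$ both $(n-k,k)$-shuffles. I would first show $\tau=\tau'$ and then use injectivity of $\sigma_k$; iterating Definition~\ref{def:increasing}\eqref{it:structure->monic} shows $\sigma_k$ is monic, which gives $\delta=\delta'$. To get $\tau=\tau'$, set $\rho=\tau^{-1}\tau'$, so that $\sigma_k(y,\delta)=\rho\cdot\sigma_k(y,\delta')$. If $\rho\notin\Sigma_{n-k}\times\Sigma_k$, I will produce a contradiction with minimality of $y$. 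Write $\sigma_k(y,\delta)=\sigma_1(u,\delta_1)$ with $u=\sigma_{k-1}(y,\delta_0)$, and do the same for $\delta'$. After conjugating $\rho$ by an element of $1\times\Sigma_k$ (which can be absorbed into $\delta'$), I can arrange that $\rho$ moves $n$ out of the last block, so in particular $\rho(n)\neq n$. Condition~\eqref{it:latched} then yields some $z\in X_{n-2}$ to which both $u$ and $\rho''\cdot u'$ are latched. Using Lemma~\ref{lem:pre:rootunique} again, $z$ is comparable with $y$ inside $\mathcal{P}_x$, and I expect to extract a lower bound of $y$ of strictly smaller grading, or a simplex $u$ latched to $y$ in a way that exhibits $y$ itself as latched, contradicting that $y$ is unlatched. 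Thus $\rho\in\Sigma_{n-k}\times\Sigma_k$. Since $\tau,\tau'$ are shuffles representing the same coset (Lemma~\ref{lem:d_latching}), $\tau=\tau'$. Writing $\rho=(\rho_1,\rho_2)$, I then get $\sigma_k(y,\delta)=\sigma_k(\rho_1 y,\rho_2\delta')$, and monicity gives $\rho_1y=y$ and $\delta=\rho_2\delta'$. Since $\rho=1$, this gives $\delta=\delta'$.

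The main obstacle is this last step. It requires carefully tracking shuffles when $\rho$ mixes the two blocks, and showing that any such mixing forces $y$ to be latched. This is essentially the same shuffle bookkeeping as in the proof of Proposition~\ref{prop:flat->inc}. I would first try the case $k=1$ directly from \eqref{it:latched}, where $\rho(n)\ne n$ gives a $z$ that $y$ is latched to, contradicting that $y$ is unlatched. I would then induct on $k$, peeling off one suspension coordinate at a time with Lemma~\ref{lem:d_latching}(2).
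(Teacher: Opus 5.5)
Your existence argument and your proof that the root $y$ (hence $k$) is unique are exactly the paper's proof. Gradings are nonnegative, so $\mathcal{P}_x$ has minimal elements. Lemma~\ref{lem:pre:rootunique}, applied to two minimal elements with common upper bound $x$, forces them to coincide. That is all the paper proves at this point. Uniqueness of $(\tau,\delta)$ is deferred to Lemma~\ref{lem:latchingunique}, which shows that the root is initial in $\mathcal{C}_x$.

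Your third paragraph, aimed at that uniqueness, has a genuine gap once $k\ge 2$.

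\emph{What your deductions give.} After you arrange $\rho(n)\ne n$, condition~\eqref{it:latched} gives $z\in X_{n-2}$ with $u$ and $u'$ latched to $z$. Lemma~\ref{lem:pre:rootunique} together with minimality of $y$ then yields only $y\le z\le u$, i.e.\ $z$ is latched to $y$. For $k=1$ (where $u=y$) this contradicts $y$ being unlatched. For $k\ge 2$ it contradicts nothing.

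\emph{Why no contradiction is available.} Your conclusions so far do not use where $\rho$ sends $n$, and they hold equally in the harmless identity $\sigma_1(\sigma_1(y,a),b)=(n-1\;n)\cdot\sigma_1(\sigma_1(y,b),a)$, with $z=y$. The fact that $\rho$ mixes the two blocks is recorded only in the shuffle data. Condition~\eqref{it:latched} discards that data, since it asserts only that some common $z$ exists, with unspecified latchings. So no argument through gradings and minimality alone can produce the contradiction.

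\emph{The missing ingredient.} You need the paper's Lemma~\ref{lem:pre:latchingunique}: if $y$ is the root, every latching $y\to x$ factors, compatibly with the latching data, through any one-step latching $w\to x$. The paper proves this by induction on the level difference, using condition~\eqref{it:latched} at each level to complete a square. With it, your proposed induction on $k$ goes through as in the paper. Both $(\tau,\delta)$ and $(\tau',\delta')$ factor through the same one-step latching $u'\to x$. The inductive hypothesis for $u'$, whose root is also $y$, identifies the two factors, and so $(\tau,\delta)=(\tau',\delta')$. Your $k=1$ argument is precisely the base case of Lemma~\ref{lem:latchingunique}.
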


\begin{proof}

  Since gradings must be nonnegative integers, Lemma~\ref{lem:pre:rootunique} implies the poset $\mathcal{P}_x$ must have a minimal
  element, and thus there is a unique simplex with maximal $k$.
\end{proof}

\begin{definition}\label{defn:root}
  For a simplex $x\in X_n$, its \emph{root} is the simplex $r_x\in X_{n-k}$,
  with maximal $k$, such that $x$ is latched to $r_x$.  

\end{definition}

Moreover, every simplex is \emph{uniquely} latched to its root.  In this way,
latched and unlatched simplices behave similarly to degenerate and nondegenerate
simplices in a simplicial set.  We show this by proving that the category below has an initial object.

  For a simplex $x$ in $X_n$,  let $\C_x$ be the category with
  \begin{description}
  \item[objects] simplices $y\in X_k$ for $k \leq n$ that $x$ is latched to, and
  \item[morphisms] for $y\in X_k$ and $y'\in X_{k'}$ for $k < k'$, a morphism $y
    \rto y'$ is a pair $(\tau,\delta)$ of a $(k,k'-k)$-shuffle $\tau$ and a
    $k'-k$-simplex $\delta$ such that $y'$ is $(\tau,\delta)$-latched to $y$,
    and
  \item[composition] given by composition of latching from Lemma~\ref{lem:latch_transitive}.
  \end{description}

\begin{lemma} \label{lem:pre:latchingunique}
  Given a solid-arrow diagram:
  \begin{diagram}
    { y &  z \\ & z'\\};
    \to{1-1}{1-2}^{(\tau,\delta)}
    \to{2-2}{1-2}_{(\epsilon,\delta_1)}
    \diagArrow{->,densely dotted}{1-1}{2-2}
  \end{diagram}
  in $\mathcal{C}_x$ where $|z'| = |z|-1$ there is a dotted arrow making the diagram
  commute. 
\end{lemma}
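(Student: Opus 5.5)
We read the statement with $y\in X_k$, $z\in X_m$, $z'\in X_{m-1}$: $z$ is $(\tau,\delta)$-latched to $y$ with $\delta\in S^{m-k}$, and $z$ is $(\epsilon,\delta_1)$-latched to $z'$ with $\delta_1\in S^1$. We must produce $(\tau',\delta')$ with $z'$ being $(\tau',\delta')$-latched to $y$, such that composing via Lemma~\ref{lem:latch_transitive} gives back $(\tau,\delta)$. If $k=m-1$, then $y$ and $z'$ have the same grading. We will argue that they coincide and that the two arrows agree, using the same injectivity argument as in the general case below.

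\textbf{Setup.} First write $z=\tau\cdot\sigma_{m-k}(y,\delta)$. Factor $\sigma_{m-k}$ as $\sigma_1\circ(\sigma_{m-k-1}\sma 1)$ and split $\delta=\delta_0\sma\delta_2$ with $\delta_2\in S^1$; this is possible when $\delta$ is a smash of simplices, and in general we work simplexwise in $S^{m-k}=S^{m-k-1}\sma S^1$. This gives
\[z=\tau\cdot\sigma_1(w,\delta_2),\qquad w=\sigma_{m-k-1}(y,\delta_0)\in X_{m-1}.\]
Also $z=\epsilon\cdot\sigma_1(z',\delta_1)$, where $\epsilon$ is an $(m-1,1)$-shuffle. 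Decompose $\tau=\epsilon''(\rho,1)$ with $\epsilon''$ an $(m-1,1)$-shuffle, using Lemma~\ref{lem:d_latching}(1). Then
\[\epsilon\cdot\sigma_1(z',\delta_1)=\epsilon''\cdot\sigma_1(\rho w,\delta_2).\]

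\textbf{Case $\epsilon=\epsilon''$.} Cancel the shuffle and use the monicity of $\sigma_1$ (Definition~\ref{def:increasing}\eqref{it:structure->monic}) to get $z'=\rho\cdot\sigma_{m-k-1}(y,\delta_0)$ and $\delta_1=\delta_2$. Then $\rho$ is a shuffle in $\Sigma_{m-1}$ composed with a block permutation. Absorb the block part into $y$ and $\delta_0$ exactly as in the proof of Lemma~\ref{lem:Lambda_well_defined}; this gives the dotted arrow. Commutativity of the triangle then follows by unwinding Lemma~\ref{lem:latch_transitive}.

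\textbf{Case $\epsilon\neq\epsilon''$.} Here $\epsilon^{-1}\epsilon''$ moves $m$, so Definition~\ref{def:increasing}\eqref{it:latched} gives some $u\in X_{m-2}$ to which both $z'$ and $\rho w$ are latched. We then want to show that the latching $z'\leftarrow u$ can be extended down to $y$. The plan is to induct on $m-k$. Applying the inductive hypothesis to the diagram $y\to\rho w\leftarrow u$ (one level lower) yields an arrow $y\to u$. Composing with $u\to z'$ gives $y\to z'$.

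\textbf{Main obstacle.} The hardest step is checking that the triangle commutes, i.e.\ that the composite shuffle and simplex really equal $(\tau,\delta)$ rather than merely latching $z$ to $y$ in some other way. For this we will compare the two expressions for $z$ as $\sigma_{m-k}$ of $y$ and use the monicity of the iterated structure maps $\sigma_j$, which follows from \eqref{it:structure->monic}. This gives equality of the $y$ and $\delta$ components and then of the shuffles. The base case $m-k=1$ with $\epsilon\neq\epsilon''$ also needs care: there $u\in X_{m-2}$ lies one level below $y$, and we must arrange that $y$ still factors through $z'$. We expect to resolve this with the same monicity-plus-shuffle-comparison computation as in the proof of Proposition~\ref{prop:flat->inc}.
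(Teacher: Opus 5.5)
Your outline follows the same skeleton as the paper's proof. You split off the last sphere coordinate, write $\tau=\epsilon''(\rho,1)$, use monicity of $\sigma_1$ when $\epsilon=\epsilon''$, and when $\epsilon\neq\epsilon''$ use Definition~\ref{def:increasing}\eqref{it:latched} to get $u$ and induct on $|z|-|y|$. However, you are missing the hypothesis that makes this work, so the base case you leave open cannot be closed the way you propose. The lemma is only true, and is only used (in Lemma~\ref{lem:latchingunique}), when $y$ is \emph{minimal} in $\mathcal{C}_x$, i.e.\ $y$ is the root of $x$.

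Without minimality the lemma is false. Take $\mathbb{S}$, which is flat and hence increasing by Proposition~\ref{prop:flat->inc}. Let $\iota$ be the nondegenerate $1$-simplex of $S^1$, let $a=s_0\iota$ and $b=s_1\iota$ be the two distinct non-basepoint $2$-simplices of $S^1$, and put $x=z=a\wedge b\in X_2=S^2$. Then $z=\sigma_1(a,b)=(1\ 2)\cdot\sigma_1(b,a)$, so $y=a$, $z'=b$ is a solid diagram with $|z'|=|z|-1=|y|$. But $a\neq b$ have the same grading, so there is no arrow $a\to b$ in $\mathcal{C}_x$. Hence in the case $m-k=1$, $\epsilon\neq\epsilon''$ (where $\epsilon''=\tau$), no ``monicity-plus-shuffle-comparison'' can make $y$ factor through $z'$. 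Your opening claim that $y=z'$ follows ``by the same injectivity argument'' is valid only in the subcase $\epsilon=\epsilon''$.

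The missing step is to \emph{exclude} that case using minimality, which is what the paper does. If $\tau\neq\epsilon$, then Definition~\ref{def:increasing}\eqref{it:latched} produces a simplex of degree $|y|-1$ to which $y$ is latched, contradicting minimality of $y$. Hence $\tau=\epsilon$, and monicity of $\sigma_1$ gives $y=z'$ and $\delta=\delta_1$. With this in place, your inductive step goes through as in the paper: the inductive hypothesis is applied with the same minimal $y$, and $u\in\mathcal{C}_x$ by Lemma~\ref{lem:latch_transitive}.

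The same omission undermines your plan for checking that the triangle commutes. From $\tau\cdot\sigma(y,\delta)=\tau''\cdot\sigma(y,\delta'')$, monicity cancels nothing until you already know $\tau=\tau''$. For non-minimal $y$ the shuffles can differ even when the $y$- and $\delta$-components agree: for example, $a\wedge a=\sigma_1(a,a)=(1\ 2)\cdot\sigma_1(a,a)$. What forces the latching shuffles to agree is again condition~\eqref{it:latched} played against the minimality of $y$, not injectivity of the iterated structure maps.
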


\begin{proof}
   We prove this by induction on $|z|-|y|$.
  If $|z|-|y|=1$ then both
  $\tau$ and $\epsilon$ are $(k,1)$-shuffles.  If $\tau(k+1) \neq \epsilon(k+1)$
  then $\tau^{-1}\epsilon(k+1)\neq k+1$, and thus by
  Definition~\ref{def:increasing}\eqref{it:latched} there exists $z''$ such that
  both $y$ and $z'$ are latched to $z''$.  But this contradicts the minimality
  of $y$.  Thus $\tau(k+1)=\epsilon(k+1)$, and $\tau = \epsilon$.  The diagram
  the states that
  $\epsilon\cdot \sigma(y,\delta) = \epsilon\cdot \sigma(z',\delta_1)$.  Since
  $\sigma$ is monic, this implies that $y = z'$ and $\delta = \delta_1$, so the
  dotted arrow trivially exists.

  Now suppose that the dotted arrow exists for $|z|-|y| < m$, and consider the
  case $|z|-|y|=m$.  Write $\tau = \epsilon' \circ \tau'$, where $\epsilon$ is
  an $(m+k-1,1)$-shuffle and $\tau'\in \Sigma_{m+k-1}\times \Sigma_1$.  Write
  $\delta = (\delta',\delta_1')$, where $\delta_1'\in S^1$.  Then the diagram
  above extends to a diagram
  \begin{diagram}[4em]
    { y & \tau'\cdot \sigma(y,\delta') & z \\ & & z'\\};
    \to{1-1}{1-2}^{(\tau',\delta')}
    \to{1-2}{1-3}^{(\epsilon',\delta_1')}
    \to{2-3}{1-3}_{(\epsilon,\delta_1)}
  \end{diagram}
  If $\epsilon = \epsilon'$ then, since $\sigma$ is monic,
  $\tau' \cdot \sigma(y,\delta') = z'$ and $\delta_1' = \delta_1$, so that the
  arrow $(\tau',\delta')$ gives the desired arrow $y \rto z'$.  It is unique
  because $\sigma$ is monic.  

If $\epsilon \neq \epsilon'$ then, by analogous
  reasoning to the first paragraph of this proof, the diagram extends to the following diagram:
  \begin{diagram}[4em]
    { y & \tau'\cdot \sigma(y,\delta') & z \\ & z'' & z'\\};
    \to{1-1}{1-2}^{(\tau',\delta')}
    \to{1-2}{1-3}^{(\epsilon',\delta_1')}
    \to{2-3}{1-3}_{(\epsilon,\delta_1)}
    \to{2-2}{2-3} \to{2-2}{1-2}
  \end{diagram}
  where $|z''| = |z'|-1$.  In particular, by the induction hypothesis there
  exists an arrow $y \rto z''$ making the diagram commute.  Since the right-hand
  square commutes, this implies the existence of an arrow $y \rto z'$ making the
  diagram commute.
\end{proof}

\begin{lemma} \label{lem:latchingunique}
  Let $X$ be an increasing spectrum.
  Let $x\in X_n$ and let $y\in X_k$ be the root of $x$.  If $(\tau,\delta)$ and
  $(\tau',\delta')$ are such that $x$ is $(\tau,\delta)$- and
  $(\tau',\delta')$-latched to $y$, then $(\tau,\delta) = (\tau',\delta')$.
\end{lemma}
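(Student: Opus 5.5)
We will prove this by induction on $n-k$, reducing to the one-step uniqueness contained in Lemma~\ref{lem:pre:latchingunique}. The idea is to view $(\tau,\delta)$ and $(\tau',\delta')$ as two morphisms $y \rto x$ in $\C_x$ and to show that any two parallel morphisms out of the root coincide.

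The base case $n-k = 1$ is exactly the first paragraph of the proof of Lemma~\ref{lem:pre:latchingunique}, with $z = z' = x$. Suppose both $\tau$ and $\tau'$ are $(k,1)$-shuffles. If $\tau(k+1)\neq\tau'(k+1)$, then $\tau^{-1}\tau'(k+1)\neq k+1$. In that case Definition~\ref{def:increasing}\eqref{it:latched} produces a simplex in $X_{k-1}$ to which $y$ is latched, contradicting that $y$ is the root. Hence $\tau=\tau'$, and since $\sigma_1$ is monic, $\delta=\delta'$.

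For the inductive step, I factor each latching through level $n-1$. Write $\tau=\epsilon(\tau_0\times 1)$ with $\epsilon$ an $(n-1,1)$-shuffle, and $\delta=(\delta_0,\delta_1)$ with $\delta_1\in S^1$. Then $x$ is $(\epsilon,\delta_1)$-latched to $w=\tau_0\cdot\sigma(y,\delta_0)\in X_{n-1}$, and $w$ is $(\tau_0,\delta_0)$-latched to $y$. Factor $(\tau',\delta')$ similarly through $w'$ via $(\epsilon',\delta'_1)$ and $(\tau'_0,\delta'_0)$. The argument then splits into two cases.

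\emph{Case $\epsilon=\epsilon'$.} From $\epsilon\sigma(w,\delta_1)=\epsilon\sigma(w',\delta_1')$ and monicity of $\sigma$ we get $w=w'$ and $\delta_1=\delta_1'$. Now $w$ is latched to its root $y$ in two ways. Here I must check that the root of $w$ is still $y$: any simplex that $w$ is latched to is also latched from $x$ by Lemma~\ref{lem:latch_transitive}, so a deeper root of $w$ would contradict the maximality in Lemma~\ref{lem:rootunique}. The inductive hypothesis then gives $(\tau_0,\delta_0)=(\tau'_0,\delta'_0)$, and hence $(\tau,\delta)=(\tau',\delta')$.

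\emph{Case $\epsilon\neq\epsilon'$.} This is the main obstacle. Applying Definition~\ref{def:increasing}\eqref{it:latched} to $\sigma(w,\delta_1)=\epsilon^{-1}\epsilon'\sigma(w',\delta'_1)$ gives a common $v\in X_{n-2}$ to which both $w$ and $w'$ are latched. I then apply Lemma~\ref{lem:pre:latchingunique} to the morphism $y\rto w$ together with $v\rto w$. This factors $y\rto w$ through $v$ by some arrow $g$, and likewise factors $y\rto w'$ through $v$ by some arrow $g'$. Now $v$ has root $y$ by the same transitivity argument, and $|v|-|y|<n-k$. By induction $g=g'$, since both are latchings of $v$ to its root.

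It then remains to see that the two composites $v\rto w\rto x$ and $v\rto w'\rto x$ agree as latchings of $x$ to $v$. If they do, then $(\tau,\delta)$ and $(\tau',\delta')$ are both the composite of $g$ with this common latching $v\rto x$, so they are equal. I would verify the agreement directly, as a level-$2$ statement. Both composites express $x$ as a shuffle applied to $\sigma_2(v,-)$. I would rewrite both shuffles against the common coset decomposition of Lemma~\ref{lem:d_latching}. I expect that the $(n-2,2)$-shuffle parts agree and that the remaining parts differ by the transposition of the last two coordinates, matched by swapping the two $S^1$-coordinates. That swap is precisely how the increasing condition produced $v$, as in the end of the proof of Proposition~\ref{prop:flat->inc}. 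Granting this, the composites agree as latchings, which completes the inductive step.
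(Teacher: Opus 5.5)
Your base case and your case $\epsilon=\epsilon'$ are sound, including the check that $y$ remains the root of $w$. The case $\epsilon\neq\epsilon'$, however, has a genuine gap. The agreement of the composites $v\to w\to x$ and $v\to w'\to x$ is exactly what must be shown, and you only grant it.

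Nothing you have available supplies that agreement:
\begin{itemize}
\item Definition~\ref{def:increasing}\eqref{it:latched} asserts only that \emph{some} $v$ exists to which $w$ and $w'$ are latched. It carries no latching data and no compatibility with the equation $\sigma(w,\delta_1)=\epsilon^{-1}\epsilon'\sigma(w',\delta_1')$.
\item The coordinate swap at the end of the proof of Proposition~\ref{prop:flat->inc} comes from injectivity of $\nu^T_n$, i.e.\ from flatness, which is not a hypothesis here.
\item The arrows $v\to w$ and $v\to w'$ are arbitrary choices, and latchings to a simplex other than the root need not be unique. For example, in $\mathbb{S}$ the simplex $a\sma a\in S^2$ is latched to $a\in S^1$ both via the identity and via the transposition. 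So any agreement would depend on choices you have not specified.
\end{itemize}
Most tellingly, take $n-k=2$. Then $v=y$ is forced, since both have degree $k$ and $y$ is the root of $w$. The base case forces $v\to w$ and $v\to w'$ to be $(\tau_0,\delta_0)$ and $(\tau_0',\delta_0')$, and $g=g'=\mathrm{id}$. The asserted agreement of composites is then literally $(\tau,\delta)=(\tau',\delta')$, so in the first nontrivial case the argument is circular.

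The case split is unnecessary if you use Lemma~\ref{lem:pre:latchingunique} as the paper does. Keep the factorization $(\tau,\delta)=(\epsilon,\delta_1)\circ(\tau_0,\delta_0)$ through $w$. Then apply Lemma~\ref{lem:pre:latchingunique} to the \emph{other} latching $(\tau',\delta')\colon y\to x$ together with the fixed codimension-one arrow $(\epsilon,\delta_1)\colon w\to x$. This writes $(\tau',\delta')=(\epsilon,\delta_1)\circ g'$ for some $g'\colon y\to w$. Since $y$ is the root of $w$, the induction hypothesis gives $g'=(\tau_0,\delta_0)$, hence $(\tau',\delta')=(\tau,\delta)$, with no need to compare $\epsilon$ and $\epsilon'$. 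That is the paper's argument: fix one arrow $w\to z$ with $|w|=|z|-1$, observe that every arrow $y\to z$ factors through it, and conclude $\#\Hom(y,z)=1$ by induction.

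Be aware that this relocates rather than removes the difficulty you identified. In its own case $\epsilon\neq\epsilon'$, the proof of Lemma~\ref{lem:pre:latchingunique} asserts that a square of latchings produced by Definition~\ref{def:increasing}\eqref{it:latched} commutes. That is the same kind of compatibility, so a fully self-contained proof would have to establish it.
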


\begin{proof}

  Our goal is to show that the root of $x$ is the initial object of $\C_x$.
  There is a functor from $\C_x$ to the poset $\mathcal{P}_x$ 
given by the forgetful functor.  Thus functor is
  full and injective on objects; in particular, it follows that $\C_x$ has an
  object $y$ which is ``minimal,'' in the sense that for all $z\in \C_x$, there
  exists a morphism $y \rto z$.  In order to show that $y$ is initial, it
  therefore remains to show that this morphism is \emph{unique}.

  We prove that for all $z\in \C_x$ $\#\Hom(y,z) = 1$ by induction on
  $|z|-|y|$.  Since $y$ is minimal, $|z| \geq |y|$ for all $z\in \C_x$.  When
  $|z| - |y| = 0$ we must have $z = y$ by Lemma~\ref{lem:rootunique}.  Since the only latching of $y$ to
  itself is the trivial latching, the claim holds in this case.  

Now suppose
  that $|z| - |y| = 1$.  Suppose there exist two different latchings
  $(\tau,\delta)$ and $(\tau',\delta')$ of $z$ to $y$.  If $\tau = \tau'$ then
  we have $\sigma_1(y,\delta) = \sigma_1(y,\delta')$, and since $\sigma$ is
  monic, $\delta = \delta'$.  If $\tau\neq \tau'$ then $\tau^{-1}\tau(n) \neq n$
  (since both $\tau$ and $\tau'$ are $(n-1,1)$-shuffles they are uniquely
  determined by their value on $n$), so we can write
  $\tau^{-1}\tau = \gamma\cdot (\rho_{n-1},\rho_1)$, so that
  $\sigma(y,\delta) = \gamma \cdot \sigma(\rho_{n-1}\cdot y, \delta')$.  But
  this implies, by Definition~\ref{def:increasing}\eqref{it:latched}, that $y$
  is latched to a lower-degree simplex, contradicting the assumption that $y$ is
  minimal.  Thus this case cannot happen and we have $\#\Hom(y,z) = 1$ in this
  case.

  Now consider the case when $|z|-|y| > 1$, and suppose that the claim holds up
  to $|z|-|y|-1$.  Fix any $w$ with a morphism $w \rto z$ and $|w| = |z|-1$.
  Then, by Lemma~\ref{lem:pre:latchingunique}, any morphism $y \rto z$ factors through $w \rto z$.
  Since, by the induction hypothesis, $\#\Hom(y,w) = 1$, it must therefore be
  the case that $\#\Hom(z,w) \leq 1$.  Since we know that $\#\Hom(y,w) \leq 1$,
  this implies the desired result.
\end{proof}

  We denote by
  $(\tau_x,\delta_x)$ the unique shuffle $\tau_x\in S_{n-k,k}$ and simplex
  $\delta_x\in S^k$ such that $x$ is $(\tau_x,\delta_x)$-latched to $r_x$.  If
  $x$ is unlatched we define $r_x = x$, $\tau_x = 1\in S_{n,0}$ and $\delta_x =
  *$.  

\subsection{The combinatorial latching space}

We now give an alternate combinatorial description of a latching space $\LP_nX$ using
roots.  Since roots are not well-defined for non-increasing spectra, this
definition \emph{only} holds when $X$ is increasing.  It takes significant work
to check that $\LP_n X$ is well-defined, so this will be postponed to
Section~\ref{sec:tech:inc->flat}. 

\begin{definition}
  Let $X$ be an increasing spectrum, and let $n \geq 0$.  Write $S_{(k,n-k)}
  \subseteq \Sigma_n$ 
  for the set of $(k,n-k)$-shuffles. The
  \emph{combinatorial latching space} $\LP_nX$ is the pointed
  $\Sigma_n$-simplicial set with $m$-simplices
  \[(\LP_nX)_m = \left\{ (\gamma, x, \delta) \in \bigvee_{k=0}^{n-1}
      S_{(k,n-k)}^+\sma (X_{k})_m \sma S^{n-k}_m \,\middle| x\, \hbox{
        unlatched}\right\}.\]
\end{definition}

The proof that this is well-defined is in
Proposition~\ref{prop:latch_well_defined}.

We can define a map 
\begin{equation}\label{eq:var_nu}
  \nu_n^C\colon \LP_nX\to X_n
\end{equation}
by $(\gamma, x, \delta)\mapsto \gamma\cdot \sigma(x,\delta)$.  There is also a
comparison map $\mu_n: L_n^TX \rto \LP_nX$ sending each simplex in the latching
space to its root and the latching data.  (The formal definition is more
complicated, and can be found on page~\pageref{def:mun}.)

\begin{proposition} \label{prop:latch_maps_commute}
  For an increasing spectrum $X$ the diagram
\begin{equation}\label{eq:latch=latch}\begin{tikzcd}
	{L_n^TX} && {\LP_nX } \\
	& {X_n}
	\arrow["{\mu_n}", from=1-1, to=1-3]
	\arrow["{\nu^T_n}"', from=1-1, to=2-2]
	\arrow["{\nu^C_n}", from=1-3, to=2-2]
      \end{tikzcd}\end{equation}
    commutes.
  \end{proposition}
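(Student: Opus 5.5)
Since $L_n^TX\cong\colim_{\sT_n}P_{n,X,\redS}$ by \eqref{eq:lt_as_colim}, and both $\nu^T_n$ and $\mu_n$ are defined on this colimit through its components, commutativity of \eqref{eq:latch=latch} can be checked one component at a time. I will therefore fix $(a,b)\in\sT_n$ and a simplex of $P_{n,X,\redS}(a,b)$ represented by $(\gamma,x,\delta,\delta')$, with $\gamma\in\Sigma_n$, $x\in X_a$, $\delta\in S^{n-a-b}$ and $\delta'\in S^b$. It is enough to show that both composites send this simplex to the same element of $X_n$. By Lemma~\ref{lem:vn_via_colim}, the lower composite $\nu^T_n$ sends it to $\gamma\cdot\sigma(x,(\delta,\delta'))$.

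For the upper composite I will use the intended description of $\mu_n$. The simplex $w=\gamma\cdot\sigma(x,(\delta,\delta'))$ is latched to $x$, so by Lemma~\ref{lem:latch_transitive} it is latched to the root $r_x$ of $x$. Lemmas~\ref{lem:rootunique} and~\ref{lem:latchingunique} supply a unique root and unique latching data. Roughly, $\mu_n$ sends the simplex to
\[(\gamma',\, r_x,\, \delta'')\in S_{(k,n-k)}^+\sma X_k\sma S^{n-k},\]
where:
\begin{itemize}
\item $(\gamma',\delta'')$ is obtained by composing the latching $(\tau_x,\delta_x)$ of $x$ to $r_x$ with the latching $(\gamma,(\delta,\delta'))$ of $w$ to $x$, as in Lemma~\ref{lem:latch_transitive};
\item the resulting permutation is then normalized to a shuffle by Lemma~\ref{lem:d_latching}(1);
\item the leftover $\Sigma_k\times\Sigma_{n-k}$ part acts on $\delta_x\sma(\delta,\delta')$. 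The $\Sigma_k$-part must be trivial, because $r_x$ is unlatched and the data is unique.
\end{itemize}

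Applying $\nu^C_n$ to this triple gives $\gamma'\cdot\sigma(r_x,\delta'')$. By construction this equals
\[\gamma(\tau_x\times1)\cdot\sigma\bigl(\sigma(r_x,\delta_x),(\delta,\delta')\bigr)=\gamma\cdot\sigma(\tau_x\sigma(r_x,\delta_x),(\delta,\delta'))=\gamma\cdot\sigma(x,(\delta,\delta')).\]
The two ingredients here are that $\sigma$ is equivariant and that the structure maps are associative, $\sigma(\sigma(z,\alpha),\beta)=\sigma(z,\alpha\sma\beta)$. This matches the lower composite.

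I expect the main obstacle to be bookkeeping rather than any conceptual step. One must use the formal definition of $\mu_n$ (page~\pageref{def:mun}), and check that the shuffle normalization commutes with forming $\sigma$. Concretely, write $\gamma(\tau_x\times 1)=\gamma'(\rho_k,\rho_{n-k})$. Then I need
\[\gamma'\cdot\sigma(r_x,\rho_{n-k}\cdot\delta'')=\gamma(\tau_x\times1)\cdot\sigma(r_x,\delta'').\]
This is exactly the equivariance identity already used in the proof of Lemma~\ref{lem:Lambda_well_defined}, so I would invoke that computation directly. Since $\mu_n$ is well defined on the colimit (Section~\ref{sec:tech:inc->flat}), the maps agree on all generating simplices. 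Hence they agree on $L_n^TX$, and the diagram commutes.
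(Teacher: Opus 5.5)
Your argument is correct and is essentially the paper's. The commutativity reduces, component by component, to the computation $\gamma'\cdot\sigma(r_x,\rho\cdot(\delta_x,\delta))=\gamma(\tau_x,1)\cdot\sigma(r_x,(\delta_x,\delta))=\gamma\cdot\sigma(x,\delta)$ that gives the triangle \eqref{eq:vnT_diagram} in Lemma~\ref{lem:define_phi}; the paper's proof additionally carries out the cocone check $\varphi_k\beta_k=\varphi_{k+1}\alpha_k$ that makes $\mu_n$ well defined, which you take as given. One small correction: the $\Sigma_k$-part vanishes not because $r_x$ is unlatched, but because you may choose $\gamma$ to be a shuffle and apply Lemma~\ref{lem:d_latching}(2). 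Even otherwise, equivariance of $\sigma$ would absorb it into $r_x$, so your conclusion is unaffected.
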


  That $\mu_n$ and $\nu_n^C$ are well-defined is non-trivial, and is also
  postponed to Section~\ref{sec:tech:inc->flat}; see page~\pageref{proof:latch_maps_commute} for those proofs and the proof of this proposition.
  The key property of $\mu_n$ is that, for increasing spectra, it is an
  isomorphism:

\begin{proposition} \label{prop:latch=latch}
  When $X$ is an increasing spectrum, the map  $\mu_n:L_n^TX\rto \LP_nX $  is an isomorphism.
\end{proposition}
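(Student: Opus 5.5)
The plan is to build an explicit inverse $\theta_n:\LP_nX\rto L_n^TX$ and show it is a two-sided inverse of $\mu_n$. I work with the model $L_n^TX\cong\colim_{\sT_n}P_{n,X,\redS}$ from \eqref{eq:lt_as_colim}. By Proposition~\ref{prop:latch_maps_commute}, $\mu_n$ sends a representative $(\gamma,x,\delta,\delta')$ in $P_{n,X,\redS}(a,b)$ to the root of $\gamma\cdot\sigma(x,(\delta,\delta'))$ together with its unique latching data. That data is well defined by Lemma~\ref{lem:rootunique} and Lemma~\ref{lem:latchingunique}.

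The inverse $\theta_n$ sends $(\gamma,x,\delta)$, with $x\in X_k$ unlatched, $k<n$, and $\gamma$ a $(k,n-k)$-shuffle, to the class of $(\gamma,x,*,\delta)$ in $P_{n,X,\redS}(k,n-k)$. Here $\delta\in S^{n-k}=\redS_{n-k}$ is legitimate because $n-k>0$. This assignment respects faces and degeneracies: if $d_i\delta$ becomes the basepoint both sides collapse, and $d_ix$ may become latched, but then the class of $(\gamma,d_ix,*,d_i\delta)$ equals the class recorded by $\LP_nX$ after re-rooting. So simpliciality of $\theta_n$ reduces to the same compatibility that makes $\LP_nX$ well defined (Proposition~\ref{prop:latch_well_defined}), and I would reuse that argument. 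Also, $\mu_n\theta_n=1$ is immediate: the root of $\gamma\cdot\sigma(x,\delta)$ is $x$ with data $(\gamma,\delta)$, since $x$ is unlatched and the root is unique.

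The substantive step is $\theta_n\mu_n=1$, i.e.\ that every class in the colimit equals the class of its root representative. Take $(\gamma,x,\delta,\delta')\in P(a,b)$ with $b\ge1$. First use the morphism $(a,b)\rto(a,n-a)$ in $\sT_n$ to identify it with $(\gamma,x,*,\delta\sma\delta')$, which absorbs the middle sphere coordinate into the $\redS$ coordinate. Next, if $x$ is latched to its root, write $x=\tau_x\sigma(r_x,\delta_x)$ with $r_x\in X_{a-j}$. The element $(\gamma(\tau_x\times1),r_x,\delta_x,\delta\sma\delta')\in P(a-j,n-a)$, under the morphism $(a-j,n-a)\rto(a,n-a)$, maps to $(\gamma,x,*,\delta\sma\delta')$, using the equivariance of $\sigma$ and that $\tau_x\in\Sigma_a$ is absorbed into the balanced smash. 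Then push this element forward along $(a-j,n-a)\rto(a-j,n-a+j)$. The result is $(\gamma',r_x,*,\delta'')$ once $\gamma(\tau_x\times1)$ times the twist is rewritten via Lemma~\ref{lem:d_latching}(2) as a shuffle times an element of $\Sigma_{a-j}\times\Sigma_{n-a+j}$, whose second factor acts on $\delta''$. By Lemma~\ref{lem:latch_transitive} and uniqueness of latching data, this is exactly $\theta_n\mu_n$ of the original class.

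The main obstacle is that $\mu_n$ must be well defined on the colimit, not just on representatives, and in particular that it respects the zigzags in $\sT_n$. The key fact for this is that latching data composes consistently (Lemma~\ref{lem:latch_transitive}) and the root is unique. A cleaner route is to define $\mu_n$ as $\nu_n^T$ followed by taking the root, which is possible because $\nu^T_n$ lands in $\Lambda_nX$ by Lemma~\ref{lem:nuL=lambda}. Then well-definedness of $\mu_n$ is automatic. In that formulation, $\theta_n\mu_n=1$ says precisely that two representatives with the same image in $X_n$ are identified in the colimit, and that follows from the reduction to root representatives above.
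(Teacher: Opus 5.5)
Your proposal is correct and is essentially the paper's intended argument. Your step $\theta_n\mu_n=1$ is exactly Lemma~\ref{lem:equiv_unlatched}: the same zigzag through $P_{n,X,\redS}(a-j,n-a)$ to a representative with unlatched $X$-component. Your step $\mu_n\theta_n=1$ is the fact behind Lemma~\ref{lem:injective_unlatched}, namely that $\varphi_k$ fixes triples whose $X$-component is unlatched. Packaging these as a two-sided inverse instead of surjectivity plus injectivity is cosmetic, and you need not check simpliciality of $\theta_n$ separately, since a levelwise inverse of the simplicial map $\mu_n$ is automatically simplicial.
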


The proof of this is on page~\pageref{proof:latch=latch}.

\subsection{Proofs of the main results}

We are now ready to prove
Theorem~\ref{thm:characterize_flat_projective_spectra},
Corollary~\ref{cor:flatY->proj}, and
that increasing spectra are flat.

\begin{proof}[Proof of Theorem~\ref{thm:characterize_flat_projective_spectra}]\label{proof:characterize_flat_projective_spectra}  By Proposition~\ref{prop:flat->inc} $X$ is increasing.
Using Proposition~\ref{prop:latch=latch} we can consider the map $\nu^C$ rather than $\nu^T$.  Since $X$ is flat, $\nu^T$ is injective. Then the result follows from the characterization of projective cofibrations in Remark~\ref{rmk:characterize_cofib}. 
\end{proof}

\label{page_ref:flatY->proj}
\begin{proof}[Proof of Corollary~\ref{cor:flatY->proj}]
  If $Y$ is projective  for all $n,m\geq 0$, Theorem~\ref{thm:characterize_flat_projective_spectra} implies the action of
  $\Sigma_n$ on 
\[(Y_n)_m \smallsetminus (\Lambda_nY)_m\] 
is free.  
    Since $f$ is
  flat $\nu_n$ is a cofibration.  The action of $\Sigma_n$ on $(Y_n)_m \smallsetminus (\Lambda_nY)_m$ is free, so the action of $\Sigma_n$ on the subset 
\[(Y_n)_m \smallsetminus (\Lambda_nY \cup_{\Lambda_nX}
  f_n(X_n))_m\] is free.
\end{proof}

\begin{proposition} \label{prop:inc->flat}
  All increasing spectra are flat.
\end{proposition}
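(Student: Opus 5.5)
Since $X$ is increasing, Proposition~\ref{prop:latch=latch} shows that $\mu_n\colon L_n^TX\rto \LP_nX$ is an isomorphism. By Proposition~\ref{prop:latch_maps_commute}, $\nu_n^T=\nu_n^C\circ\mu_n$. So $\nu_n^T$ is a cofibration, i.e.\ monic by Remark~\ref{rmk:characterize_cofib}, if and only if $\nu_n^C\colon \LP_nX\rto X_n$ is monic. The plan is therefore to prove directly that $\nu_n^C$ is injective on $m$-simplices for every $m$. This is done degreewise, and simplicial compatibility is already part of the well-definedness of $\LP_nX$.

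Take two non-basepoint simplices $(\gamma,x,\delta)$ and $(\gamma',x',\delta')$ of $\LP_nX$ with the same image $w=\gamma\cdot\sigma(x,\delta)=\gamma'\cdot\sigma(x',\delta')\in X_n$. Here $x\in X_k$ and $x'\in X_{k'}$ are unlatched, $k,k'<n$, $\gamma$ is a $(k,n-k)$-shuffle, and $\delta\in S^{n-k}$.
\begin{enumerate}
\item By Definition~\ref{def:latch}, $w$ is $(\gamma,\delta)$-latched to $x$ and $(\gamma',\delta')$-latched to $x'$.
\item Since $x$ is unlatched, nothing lies strictly below it in $\mathcal{P}_w$. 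So $x$ is minimal in $\mathcal{P}_w$, and by Lemma~\ref{lem:pre:rootunique} and Lemma~\ref{lem:rootunique} the minimal element is unique, namely the root. Hence $x=r_w=x'$, and in particular $k=k'$.
\item Lemma~\ref{lem:latchingunique} then gives $(\gamma,\delta)=(\gamma',\delta')$.
\end{enumerate}
Thus the two simplices coincide. Basepoint issues are handled separately: a simplex whose $\delta$ is the basepoint is identified with the basepoint of $\LP_nX$ and maps to the basepoint of $X_n$. Conversely, if $\gamma\cdot\sigma(x,\delta)$ is the basepoint, then the root of the basepoint is the basepoint in $X_0$ with trivial latching data, so that simplex is also the basepoint of $\LP_nX$. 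In either case injectivity is not affected.

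The main obstacle is the root step. One must check that ``unlatched'' really means minimal in $\mathcal{P}_w$, and that Lemma~\ref{lem:rootunique} gives uniqueness of the minimal element and not merely of its degree. The degree-$0$ and basepoint conventions also need care. If the wording of Lemma~\ref{lem:rootunique} only gives a unique maximal $k$, I would fall back on Lemma~\ref{lem:pre:rootunique} directly: $x$ and $x'$ have the common upper bound $w$, so they have a common lower bound $z$, and since both are unlatched, $z=x=x'$. Once that is settled, flatness follows for all $n\ge 0$. The case $n=0$ is trivial because $L_0^TX$ is a point.
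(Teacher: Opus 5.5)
Your proposal is correct and is essentially the paper's proof. You reduce to injectivity of $\nu_n^C$ via Propositions~\ref{prop:latch_maps_commute} and \ref{prop:latch=latch}, identify both unlatched components with the root of the common image (Lemma~\ref{lem:rootunique}), and get equality of the latching data from Lemma~\ref{lem:latchingunique}; your use of Lemma~\ref{lem:pre:rootunique} to show an unlatched simplex must be the root fills in a step the paper only asserts.
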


\begin{proof}
Using Proposition~\ref{prop:latch=latch} it suffices to show that $\nu_n^C$ is
injective.  We prove this by contradiction. 

Assume that there exist $m$-simplices $(\gamma,x,\delta)$ and
$(\gamma',x',\delta')$ with $x\in X_{m-k}$ and $x'\in X_{m-k'}$ such that
$\nu^C_n(\gamma,x,\delta) = \nu^C_n(\gamma',x',\delta')$.  Write $y =
\nu_n^C(\gamma,x,\delta)$.  Then by definition,
\[y = \gamma\cdot \sigma_k(x,\delta) = \gamma'\cdot \sigma_{k'}(x',\delta').\]
Thus $x$ is the root of $y$, with $y$ $(\gamma,\delta)$-latched to $x$.  But it
is also the case that $x'$ is the root of $y$, with $y$
$(\gamma',\delta')$-latched to $x'$.  By Lemma~\ref{lem:rootunique}, $x = x'$
and by Lemma~\ref{lem:latchingunique} $(\gamma,\delta) = (\gamma',\delta')$.
Thus $\nu_n^C$ is monic, as desired.
\end{proof}

\section{Technical details from
  Section~\ref{sec:inc->flat}} \label{sec:tech:inc->flat}

This section contains the proofs that were deferred from
Section~\ref{sec:inc->flat}: that $\LP_nX$, $\mu_n$ and $\nu^C$ are
well-defined, and have the relationships claimed in Propositions~\ref{prop:latch_maps_commute} and \ref{prop:latch=latch}.

\subsection{The 
combinatorial 
latching space}

The goal of this subsection is to prove that $\LP_n$ is well-defined.
\begin{proposition}\label{prop:latch_well_defined}
  For an increasing spectrum $X$, $\LP_nX$ is a pointed simplicial
  $\Sigma_n$-set with $m$-simplices $(\LP_nX)_m$.
\end{proposition}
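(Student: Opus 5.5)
We have to define faces, degeneracies and the $\Sigma_n$-action on the graded set $(\LP_nX)_m$, check that they land back in $(\LP_nX)_\bullet$, and verify the simplicial and equivariance identities. The natural way to get these identities for free is to use the injective map $\nu^C_n$. Concretely, define every operator by ``apply the operator to $y=\gamma\cdot\sigma_{n-k}(x,\delta)\in X_n$ and then take root data'':
\[
\theta\cdot(\gamma,x,\delta) \defeq (\tau_{\theta y},\, r_{\theta y},\, \delta_{\theta y}),
\]
where $\theta$ is either a simplicial operator $d_i,s_i$ or an element $\rho\in\Sigma_n$. The basepoint goes to the basepoint, and so does any simplex whose root data degenerates to the basepoint (for example when $\delta$ becomes the basepoint of $S^{n-k}$). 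The output is a legitimate simplex of $\LP_nX$ with root of level less than $n$, because $\theta y$ is still latched, to $\theta x$, via Lemma~\ref{lem:Lambda_well_defined}. Its root is unlatched by construction, and the root data are unique by Lemmas~\ref{lem:rootunique} and~\ref{lem:latchingunique}.

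\textbf{Step 1 (injectivity of root data).} The assignment $y\mapsto(\tau_y,r_y,\delta_y)$ is a bijection from the nonbase simplices of $\Lambda_nX$ onto the nonbase tuples in $(\LP_nX)_m$, with inverse $\nu^C_n$. Surjectivity uses the following consequence of Lemma~\ref{lem:latchingunique}: if $y=\gamma\cdot\sigma(x,\delta)$ with $x$ unlatched, then $x$ is the root of $y$ and $(\gamma,\delta)=(\tau_y,\delta_y)$. To see that $x$ is the root, note that any further lower bound of $x$ would make $x$ latched. By Lemma~\ref{lem:pre:rootunique}, the root of $y$ lies below every element of $\mathcal P_y$, and since $x$ is minimal, $x$ must equal the root.

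\textbf{Step 2 (transfer of identities).} Once Step 1 holds, $(\LP_nX)_\bullet$ is identified, as a graded set with operators, with the simplicial $\Sigma_n$-subset $\Lambda_nX$ of Lemma~\ref{lem:Lambda_well_defined}. The simplicial identities and the compatibility with the $\Sigma_n$-action therefore hold in $\LP_nX$ because they hold in $X_n$. Pointedness is immediate.

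\textbf{Step 3 (explicit formulas).} It is still worth recording how the operators look in terms of the tuple, since that is what Proposition~\ref{prop:latch=latch} will need.
\begin{itemize}
\item A face $d_i(\gamma,x,\delta)$ is $\gamma\cdot\sigma(d_ix,d_i\delta)$. If $d_ix$ is still unlatched, this is already root form. Otherwise $d_ix$ is latched to its root $r$, and by Lemma~\ref{lem:latch_transitive} the root data combine as $\gamma(\tau_{d_ix}\times1)$ together with $\delta_{d_ix}\sma d_i\delta$. The resulting permutation then has to be refactored into a shuffle using Lemma~\ref{lem:d_latching}(2).
\item The action of $\rho\in\Sigma_n$ follows the recipe in the proof of Lemma~\ref{lem:Lambda_well_defined}: factor $\rho\gamma=\gamma'(\rho_1,\rho_2)$, which gives $(\gamma',\rho_1x,\rho_2\delta)$. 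Here $\rho_1x$ is unlatched because $\Lambda_kX$ is $\Sigma_k$-stable.
\end{itemize}

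The main obstacle is Step 1. The root must be the \emph{minimum} of $\mathcal P_y$, so that any unlatched simplex $y$ is latched to coincides with the root, and not merely a simplex of maximal codimension. This follows from Lemma~\ref{lem:pre:rootunique}, applied to two minimal elements with common upper bound $y$: it produces a common lower bound, and minimality then forces the two to be equal. The uniqueness of the latching data then follows from Lemma~\ref{lem:latchingunique}.
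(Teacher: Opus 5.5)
Your proof is correct, but it takes a genuinely different route from the paper's.

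\textbf{The paper's route.} It defines operators directly on tuples: $s_i$ coordinatewise as in \eqref{lem:s_latching}, and $d_i$ as in \eqref{eq:d_latching}, by passing to the root of $d_ix$ and refactoring with Lemma~\ref{lem:d_latching}(2). It then verifies each simplicial identity by hand. The relation $d_id_j=d_{j-1}d_i$ takes a long bookkeeping computation with the permutations $\rho_1,\dots,\rho_6$, ending in an appeal to ``uniqueness of roots''. The $\Sigma_n$-action is never separately checked.

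\textbf{Your route.} You first prove that root data give a bijection from $\Lambda_nX$ onto $\LP_nX$ with inverse $\nu^C_n$, treating the basepoint separately, as you must. You then transport the simplicial $\Sigma_n$-structure of $\Lambda_nX$ from Lemma~\ref{lem:Lambda_well_defined}, so all identities and equivariance come for free.

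\textbf{What your route buys.} The key input is that an unlatched simplex to which $y$ is latched must be $r_y$, with latching data $(\tau_y,\delta_y)$. The paper uses this tacitly, both in its $d_id_j$ computation and in the proof of Proposition~\ref{prop:inc->flat}. You derive it explicitly from Lemmas~\ref{lem:pre:rootunique} and~\ref{lem:latchingunique}. As a by-product you already obtain injectivity of $\nu^C_n$ at this stage. Because $\nu^C_n$ becomes a simplicial $\Sigma_n$-isomorphism onto $\Lambda_nX$, commutativity of \eqref{eq:vnT_diagram} also shows that the maps $\varphi_k$ of Lemma~\ref{lem:define_phi} are simplicial, which the paper never checks.

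\textbf{What the paper's route buys.} It gives formulas intrinsic to the tuples, and those formulas are what the later comparison with $L^T_nX$ uses. Your Step~3 recovers them, so it is an essential part of your argument: it shows that the transported structure is the same simplicial $\Sigma_n$-set as the paper's. You should add the one-line degeneracy check there. Since $s_ix$ is unlatched by Lemma~\ref{lem:latching_and_structure_maps}, $(\gamma,s_ix,s_i\delta)$ is already root data, which matches \eqref{lem:s_latching}.
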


The proof will require several helper lemmas.

\begin{lemma}\label{lem:latching_and_structure_maps}
if $y$ is $(\delta,\tau)$-latched to $x$ then $d_iy$ is
  $(d_i\delta,\tau)$-latched to $d_ix$ and $s_iy$ is $(s_i\delta,\tau)$-latched
  to $s_ix$ for all $i$.  If $x$ is unlatched then so is $s_ix$.
\end{lemma}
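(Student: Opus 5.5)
The first two assertions come down to one fact: the face and degeneracy operators commute with everything in the defining equation of latching. The third assertion, about unlatched simplices, is where the increasing hypothesis enters.

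\emph{Faces and degeneracies.} Suppose $y$ is $(\delta,\tau)$-latched to $x$, so $y=\tau\cdot\sigma_k(x,\delta)$ with $\delta\in S^k$. The $\Sigma_n$-action on $X_n$ is by simplicial maps, so $d_i(\tau\cdot w)=\tau\cdot d_iw$. The structure map $\sigma_k\colon X_{n-k}\sma S^k\rto X_n$ is a map of simplicial sets. The simplicial operators on the smash product act diagonally, so $d_i(x,\delta)=(d_ix,d_i\delta)$. Combining these,
\[d_iy=\tau\cdot\sigma_k(d_ix,d_i\delta),\]
which says that $d_iy$ is $(d_i\delta,\tau)$-latched to $d_ix$. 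The argument for $s_i$ is identical. This is the same computation already used in the proof of Lemma~\ref{lem:Lambda_well_defined}. (If $d_i\delta$ is the basepoint, both sides are the basepoint, and the statement still holds in the pointed sense.)

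\emph{Unlatched simplices.} Let $x$ be unlatched, and suppose for contradiction that $s_ix$ is $(\epsilon,\rho)$-latched to some $z$, say $s_ix=\rho\cdot\sigma(z,\epsilon)$. Apply $d_i$ to both sides. On the left, $d_is_ix=x$. On the right, by the first part, we get $\rho\cdot\sigma(d_iz,d_i\epsilon)$. So $x$ would be $(d_i\epsilon,\rho)$-latched to $d_iz$, contradicting that $x$ is unlatched. Hence $s_ix$ is unlatched.

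The main point to watch is the degenerate case where $d_i\epsilon$ is the basepoint of $S^k$. In that case $d_i\sigma(z,\epsilon)$ is the basepoint, so $x=d_is_ix$ would be the basepoint simplex. The basepoint is latched, being the image of $\sigma(*,*)$ for $n\ge 1$. So this case contradicts the assumption that $x$ is unlatched directly, rather than producing a latching of $x$. Apart from this basepoint bookkeeping, everything reduces to the structure maps and the symmetric-group actions being simplicial.
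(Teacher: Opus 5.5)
Your proof is correct and matches the paper's: the first part is the direct computation using that the $\Sigma_n$-action and the structure maps are simplicial, and the second applies $d_i$ to $s_ix=\rho\cdot\sigma(z,\epsilon)$ to get a latching of $x=d_is_ix$ to $d_iz$. Two small corrections. First, contrary to your opening remark, the increasing hypothesis is never used, neither by you nor by the paper. Second, your separate basepoint case is unnecessary, since $x=\rho\cdot\sigma(d_iz,d_i\epsilon)$ is already a latching by definition even when $d_i\epsilon$ is the basepoint.
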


\begin{proof}
The first sentence follows by direct computation.  For the second
 suppose that $s_ix$ is
  $(\tau, \delta)$-latched to $y\in X_k$, so that $s_ix = \tau \cdot
  \sigma_{n-k}(y,\delta)$.  Applying $d_i$ to both sides, we see that
  \[x = d_is_i x = \tau \cdot d_i\sigma_{n-k}(y,\delta) = \tau \cdot
    \sigma_{n-k}(d_iy, d_i\delta).\]
  In particular, we see that $x$ is $(\tau, d_i\delta)$-latched to $d_iy$,
  contradicting the assumption that $x$ is unlatched.  
\end{proof}

\begin{lemma}\label{lem:root_of_d_s} For $i,j,i',j'$ such that $s_id_j =
  d_{j'}s_{i'}$ in the simplicial identities,
$s_i(r_{d_jx} )$ is the root of $d_{j'}s_{i'}x$ and  $d_{j'}s_{i'} x$ is $(\tau_{d_jx} , s_i(\delta_{d_jx} ))$-latched to $s_i(r_{d_jx} )$.
\end{lemma}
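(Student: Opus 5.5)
Set $w = d_jx$, with root $r_w$ and latching data $(\tau_w,\delta_w)$, so that
\[d_jx = \tau_w\cdot\sigma(r_w,\delta_w)\]
with $r_w$ unlatched. The plan has three steps: show that $d_{j'}s_{i'}x$ is $(\tau_w, s_i\delta_w)$-latched to $s_ir_w$, show that $s_ir_w$ is unlatched, and then conclude by uniqueness of roots.

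For the first step, apply $s_i$ to the displayed equation. The structure maps $\sigma$ are simplicial maps, and the $\Sigma_n$-action is simplicial, so $s_i$ commutes with both. This gives
\[s_id_jx = \tau_w\cdot\sigma(s_ir_w,s_i\delta_w).\]
Equivalently, this is the first sentence of Lemma~\ref{lem:latching_and_structure_maps} applied to the latching of $d_jx$ to $r_w$. By the hypothesis on the indices, $s_id_j = d_{j'}s_{i'}$, so the left-hand side equals $d_{j'}s_{i'}x$. Hence $d_{j'}s_{i'}x$ is $(\tau_w,s_i\delta_w)$-latched to $s_ir_w$. One degenerate case needs care: if $r_w = w$ with trivial latching data, the same identity still holds with $\tau_w = 1$ and $\delta_w = *$.

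For the second step, since $r_w$ is unlatched, the second sentence of Lemma~\ref{lem:latching_and_structure_maps} shows that $s_ir_w$ is unlatched.

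For the third step, we use that $X$ is increasing. By Lemma~\ref{lem:rootunique}, each simplex has a unique minimal element in $\mathcal P_{d_{j'}s_{i'}x}$. An unlatched simplex that our simplex is latched to must be minimal in that poset: any smaller element would witness a latching of it, contradicting unlatchedness. So $s_ir_w$ is the root of $d_{j'}s_{i'}x$. Then Lemma~\ref{lem:latchingunique} says the latching data to the root is unique, so it must be $(\tau_w, s_i\delta_w)$, which is exactly the claim.

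The main obstacle is the minimality claim in the third step. Lemma~\ref{lem:pre:rootunique} gives a lower bound for any two elements sharing an upper bound. Here the two elements are the root and $s_ir_w$, and their common upper bound is the simplex $d_{j'}s_{i'}x$ itself. Their lower bound must lie weakly below $s_ir_w$, and since $s_ir_w$ is unlatched that lower bound is $s_ir_w$ itself. So $s_ir_w$ lies below the root, hence equals it by minimality of the root. I would check this argument carefully, including the graded-poset convention that $|y|=|y'|$ together with $y\le y'$ forces $y=y'$. The remaining verification, that $s_i$ commutes with the $\Sigma_n$-action and with $\sigma$ in the smash factor, is routine.
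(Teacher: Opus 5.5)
Your proof is correct and follows essentially the same route as the paper. Like the paper, you apply $s_i$ to $d_jx=\tau_{d_jx}\cdot\sigma(r_{d_jx},\delta_{d_jx})$, use $s_id_j=d_{j'}s_{i'}$, and note that $s_ir_{d_jx}$ is unlatched because $d_is_i=\mathrm{id}$ would carry any latching down to $r_{d_jx}$ (the second sentence of Lemma~\ref{lem:latching_and_structure_maps}); your appeal to Lemma~\ref{lem:pre:rootunique} then makes explicit the uniqueness-of-roots step (unlatched and latched-to implies root) that the paper leaves implicit.
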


\begin{proof}
First note that 
  \[d_{j'}s_{i'} x = s_id_j x = s_i(\tau_{d_jx} \cdot \sigma_{n-k}(r_{d_jx} ,\delta_{d_jx} ) )= \tau_{d_jx} 
    \cdot \sigma_{n-k}(s_i(r_{d_jx} ),s_i(\delta_{d_jx} )),\] so $d_{j'}s_{i'} x$ is $(\tau_{d_jx} , s_i(\delta_{d_jx} ))$-latched to $s_i(r_{d_jx} )$.  
    If $s_i(r_{d_jx} )$ were
  latched to a simplex in a lower degree then  Lemmas~\ref{lem:latch_transitive} and \ref{lem:latching_and_structure_maps} imply $d_is_i(r_{d_jx} )=r_{d_jx} $ would be, as
  well. 
\end{proof}

We are now ready to prove Proposition~\ref{prop:latch_well_defined}.

\begin{proof}[Proof of Proposition~\ref{prop:latch_well_defined}]
By the second sentence in Lemma~\ref{lem:latching_and_structure_maps}, 
  the map  $s_i\colon (\LP_nX)_m \to (\LP_nX)_{m-1} $ given by
  \begin{equation}\label{lem:s_latching}
  s_i(\gamma,x,\delta) = (\gamma, s_i x, s_i\delta)\end{equation} is well defined.  

For $(\gamma, x, \delta)\in \LP_nX$, let $\gamma(\tau_{d_ix},1)=\gamma'(1,\rho)$ as in Lemma~\ref{lem:d_latching}(2).
Then we define 
\begin{equation}\label{eq:d_latching}
d_i(\gamma,x,\delta) = (\gamma', r_{d_ix}, \rho\cdot (\delta_{d_ix},d_i\delta))
\end{equation}
  If $d_ix = *$ then $d_i(\gamma,x,\delta) = *$.

  Let $i = j$ or $j+1$ and consider $d_is_j(\gamma,x,\delta)$.  Since $x$ is
  unlatched, by Lemma~\ref{lem:latching_and_structure_maps} so is $s_jx$.  Thus the root of $d_is_jx = x$ is $x$, and
  $\gamma' = \gamma$, $\tau = \id$, and $\delta$ is a $0$-simplex.  (See the second paragraph of Definition~\ref{defn:root}.)  In
  particular, the definition states that $d_is_j(\gamma,x,\delta) =
  (\gamma,x,\delta)$, as desired.  
  
    Consider $s_id_j(\gamma,x,\delta)$; let $i',j'$ be such that $s_id_j =
  d_{j'}s_{i'}$ in the simplicial identities.  Let 
  \[d_jx = \tau_{d_jx} \sigma_{n-k}(r_{d_jx} ,\delta_{d_jx} ),\] 
  and let $\gamma'$ and $\rho$ be
  such that 
  \begin{equation}\label{eq:defn_d_permutations}\gamma'(1,\rho) = \gamma(\tau_{d_jx} ,1)\end{equation}
   as in Lemma~\ref{lem:d_latching}(2).   We have
  \[s_id_j(\gamma,x,\delta) = s_i(\gamma', r_{d_jx} , \rho\cdot(\delta_{d_jx} , d_j\delta)) =
    (\gamma', s_i(r_{d_jx}) , \rho\cdot (s_i \delta_{d_jx} , s_id_j \delta)).\]
  On the other hand,
  \[d_{j'}s_{i'}(\gamma,x,\delta) = d_{j'}(\gamma, s_{i'}x,s_{i'}\delta).\] 
 Lemma~\ref{lem:root_of_d_s} and \eqref{eq:defn_d_permutations} give the middle equality in the following string
  \[d_{j'}s_{i'}(\gamma, x, \delta) = d_{j'}(\gamma, s_{i'}x, s_{i'}\delta) =
    (\gamma', s_i(r_{d_jx}), \rho\cdot (s_i\delta_{d_jx},d_{j'}s_{i'}\delta)) = (\gamma', s_i(r_{d_jx}),
    \rho\cdot(s_i\delta_{d_jx}, s_id_j\delta)),\]
  as desired.

  The relation about commuting degeneracies holds because they are applied
  coordinatewise.

  Lastly we consider the relation $d_id_j = d_{j-1}d_i$ for $i<j$.  The
  following equations are the definitions of all of the symbols on the right,
  where we omit the subscript on the $\sigma$'s so that we do not need to
  introduce more letters than necessary.  (So for example, the first equation
  states that $y$ is the root of $d_jx$, and it is $(\tau_j, \delta_j)$-latched
  to $d_jx$.)
  \[d_jx = \tau_j\cdot \sigma(y,\delta_j).\]
  \[d_iy = \tau_y \cdot \sigma(z, \delta_y).\]
  \[d_i x = \tau_i \cdot \sigma(y', \delta_i).\]
  \[d_{j-1}y' = \tau_{y'} \cdot \sigma(z', \delta_{y'}).\]

To fix notation, let $x\in X_\ell$, $z\in X_n$, $y'\in X_p$ and $z\in X_q$.  
  Using the notation above, 
  \begin{align*}
    d_id_j x &= d_i(\tau_j \cdot \sigma(y,\delta_j)) = \tau_j \cdot
    d_i(\sigma(y,\delta_j)) \\&= \tau_j \cdot \sigma(d_iy, d_i\delta_j) = \tau_j
    \cdot \sigma(\tau_y \cdot \sigma(z, \delta_y), d_i\delta_j) =
    \tau_j(\tau_y,1_{\ell-m}) \cdot \sigma(\sigma(z,\delta_y),d_i\delta_j) \\ &=
    \tau_j(\tau_y,1_{\ell-m}) \cdot \sigma(z, (\delta_y,d_i\delta_j)) = \tau_1 \cdot
                                                                       \sigma(z,
                                                                       \rho_1
                                                                       \cdot
                                                                       (\delta_y, d_i\delta_j))
  \end{align*}
  where  $\tau_1$ and $\rho_1$ satisfy 
\begin{equation}
\tau_j(\tau_y,1_{\ell-m})=\tau_1(1_{n},\rho_1).
\end{equation}
Similarly, 
  \begin{align*}
    d_{j-1}d_ix &= d_{j-1}(\tau_i\cdot \sigma(y', \delta_i)) = \tau_i\cdot
    \sigma(d_{j-1}y', d_{j-1}\delta_i) = \tau_i \cdot \sigma(\tau_{y'}\cdot
                  \sigma(z',\delta_{y'}), d_{j-1}\delta_i) \\
    &= \tau_i(\tau_{y'},1_{\ell-p}) \cdot \sigma(z', (\delta_{y'}, d_{j-1}\delta_i)) =
      \tau_2 \cdot \sigma(z', \rho_2 \cdot (\delta_{y'},d_{j-1}\delta_i))
  \end{align*}
  where  $\tau_2$ and $\rho_2$ satisfy 
\begin{equation}\label{eq:tauitauy}
\tau_i(\tau_{y'},1_{\ell-p})=\tau_2(1_{q},\rho_2).
\end{equation} 
Since roots are unique (Lemma~\ref{lem:rootunique})
  we see that 
\begin{align}
\tau_1 &= \tau_2, \label{eq:tau_s}
\\
z &= z',\label{eq:z_s}
\\ 
\rho_1\cdot (\delta_y, d_i\delta_j) &= \rho_2 \cdot (\delta_{y'}, d_{j-1}\delta_i)
\label{eq:rho_delta}
\\
n&=q
\end{align}

Take $\delta\in S^r$.
Then 
\begin{align*}
d_{j-1}d_i(\gamma,x,\delta) 
&= d_{j-1}(\gamma_3, y', \rho_3\cdot (\delta_{i},d_i\delta))
\\
&=(\gamma_4, z', \rho_4(\delta_{y'}, d_{j-1}(\rho_3\cdot (\delta_{i},d_i\delta))))
\\
&=(\gamma_4, z', \rho_4(1_{p-n}, \rho_3)(\delta_{y'}, d_{j-1}( \delta_{i},d_i\delta)))
\end{align*}
where 
 \begin{align}\label{eq:dj-1di_1}
\gamma(\tau_{i},1_{r})&=\gamma_3(1_p,\rho_3)
\\\label{eq:dj-1di_2}
\gamma_3(\tau_{y'},1_{(\ell+r)-p})&=\gamma_4(1_q,\rho_4).
\end{align} 
Then \eqref{eq:dj-1di_1} and \eqref{eq:dj-1di_2} imply
\[
\gamma(\tau_{i},1_{r})=\gamma_4(1_q,\rho_4)(\tau_{y'},1_{(\ell+r)-p})^{-1}(1_p,\rho_3)
\]
since $\tau_{y'}\in\Sigma_p$, 
\[
\gamma(\tau_{i},1_{r})(\tau_{y'},1_{(\ell+r)-p})=\gamma_4(1_q,\rho_4)(1_p,\rho_3).
\]
Then \eqref{eq:tauitauy} implies 
\[
\gamma(\tau_2,1_{r})(1_q,\rho_2,1_{r})=\gamma_4(1_q,\rho_4)(1_p,\rho_3)
\]
or 
\begin{equation}\label{eq:gammatau2}
\gamma(\tau_2,1_{r})=\gamma_4(1_q,\rho_4)(1_p,\rho_3)(1_q,\rho_2,1_{r})^{-1}.
\end{equation}
Lemma~\ref{lem:d_latching}(2) implies $\gamma_4$ and $\rho_4(1_{p-q},\rho_3)(\rho_2,1_{r})^{-1}$ are uniquely determined by $\gamma$ and $\tau_2$.

Similarly 
\begin{align*}
d_{i}d_j(\gamma,x,\delta) 
&= d_{i}(\gamma_5, y, \rho_5\cdot (\delta_{j},d_j\delta))
\\
&=(\gamma_6, z, \rho_6\cdot (\delta_y, d_i(\rho_5\cdot (\delta_{j},d_j\delta))))
\\
&=(\gamma_6, z, \rho_6(1_{m-q},\rho_5)\cdot (\delta_y, d_i (\delta_{j},d_j\delta)))
\end{align*}
where 
\begin{align*}
\gamma(\tau_j,1_r)&=\gamma_5(1_m, \rho_5)
\\
\gamma_5(\tau_y,1_{(\ell+r)-m})&=\gamma_6(1_n,\rho_6).
\end{align*}  
Parallel to the computation above
\begin{equation}\label{eq:gammatau1}
\gamma(\tau_1,1_{r})=\gamma_6(1_n,\rho_6)(1_m,\rho_5)(1_n,\rho_1,1_{r})^{-1}.
\end{equation}
Then Lemma~\ref{lem:d_latching}(2), \eqref{eq:tau_s}, \eqref{eq:gammatau2}, and \eqref{eq:gammatau1} imply 
\begin{align}
\gamma_4&=\gamma_6\label{eq:gamma_4_gamma_6}
\\
\label{eq:rho_s}
\rho_4(1_{p-q},\rho_3)(\rho_2,1_{r})^{-1}&=\rho_6(1_{m-n},\rho_5)(\rho_1,1_{r})^{-1}.
\end{align}

Postcomposing, 
\eqref{eq:rho_s}
gives 
\[
 \rho_4(1_{p-n}, \rho_3)(\delta_{y'}, d_{j-1}( \delta_{i},d_i\delta))
=\rho_6(1_{m-n},\rho_5)(\rho_1,1_{r})^{-1}(\rho_2,1_{r})(\delta_{y'}, d_{j-1}( \delta_{i},d_i\delta)).
\]
Applying $d_{j-1}$ to both terms on the far right gives 
\[\rho_6(1_{m-n},\rho_5)(\rho_1,1_{r})^{-1}(\rho_2,1_{r})(\delta_{y'}, (d_{j-1} \delta_{i},d_{j-1}d_i\delta)).\]
The equality in \eqref{eq:rho_delta} and $d_{j-1}d_i=d_id_j$ gives 
\[\rho_6(1_{m-n},\rho_5)(\rho_1,1_{r})^{-1}(\rho_1,1_{r})(\delta_{y}, (d_{i} \delta_{j},d_{i}d_j\delta)).\]
Then 
\begin{equation}\label{eq:third_term}
\rho_4(1_{p-n}, \rho_3)(\delta_{y'}, d_{j-1}( \delta_{i},d_i\delta))
=\rho_6(1_{m-n},\rho_5)(\delta_{y},d_i ( \delta_{j},d_j\delta))
\end{equation}
Finally \eqref{eq:gamma_4_gamma_6}, \eqref{eq:z_s}, and \eqref{eq:third_term} show
\[d_{j-1}d_i(\gamma,x,\delta) = d_{i}d_j(\gamma,x,\delta). \]
\end{proof}

\subsection{Comparisons of latching spaces}

This subsection proves Propositions~\ref{prop:latch_maps_commute} and \ref{prop:latch=latch}.

The map $\mu_n: L_n^T X \rto \LP_n X$ is a map out of a colimit, so in order to
construct it it suffices to construct a cocone under the colimit diagram.

\begin{lemma}\label{lem:define_phi} \label{def:mun}
  For all $0\leq k < n$, the  map
  \begin{equation}\label{eq:define_phi}\varphi_k: \Sigma^+_n \sma_{\Sigma_k \times \Sigma_{n-k}} X_k \sma S^{n-k} \rto \LP_nX
\end{equation}
  given by
  $\varphi_k (\gamma, x, \delta)= (\gamma', r_x, \rho\cdot
  (\delta_x, \delta))$ where 
 $\gamma\circ (\tau_x,1) =
  \gamma'\circ (1,\rho)$ as in Lemma~\ref{lem:d_latching}(2) 
    is well defined and the following diagram commutes.
\begin{equation}\label{eq:vnT_diagram}
\begin{tikzcd}
	{\Sigma^+_n \sma_{\Sigma_k \times \Sigma_{n-k}} X_k \sma S^{n-k} } && {\LP_nX} \\
	& {X_n}
	\arrow["{\varphi_k}", from=1-1, to=1-3]
	\arrow["{\eqref{eq:vn_via_colim}}"', from=1-1, to=2-2]
	\arrow["{\nu_n^C}"
, from=1-3, to=2-2]
\end{tikzcd}
\end{equation}
In particular, these maps induce the map $\mu_n: L_n^T X \rto \LP_nX$.
\end{lemma}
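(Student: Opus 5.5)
The plan is to check four things in turn: that the formula for $\varphi_k$ is independent of the choice of coset representative; that $\varphi_k$ is a map of pointed simplicial sets; that the triangle \eqref{eq:vnT_diagram} commutes; and that the $\varphi_k$ form a cocone under $P_{n,X,\redS}$, so that they induce $\mu_n$ on $L^T_nX\cong\colim_{\sT_n}P_{n,X,\redS}$ by \eqref{eq:lt_as_colim}.

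\textbf{Coset independence.} Throughout, $\Sigma_n$ acts on $\LP_nX$ by left multiplication on the shuffle coordinate, followed by re-normalizing to a shuffle as in Lemma~\ref{lem:d_latching}.
\begin{enumerate}
\item I first define $\tilde\varphi_k$ on $\Sigma_n^+\sma X_k\sma S^{n-k}$ by the stated formula. This makes sense because, for any $x$, the root $r_x$ and the data $(\tau_x,\delta_x)$ are uniquely determined by Lemmas~\ref{lem:rootunique} and~\ref{lem:latchingunique}. The output lies in $\LP_nX$ because $r_x$ is unlatched, and basepoints go to basepoints.
\item I then check invariance under $(\alpha,\beta)\in\Sigma_k\times\Sigma_{n-k}$, i.e.\ that $(\gamma(\alpha,\beta),\alpha^{-1}x,\beta^{-1}\delta)$ and $(\gamma,x,\delta)$ have the same image.
\item For this I identify the root of $\alpha^{-1}x$. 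From $x=\tau_x\sigma(r_x,\delta_x)$ we get $\alpha^{-1}x=\alpha^{-1}\tau_x\,\sigma(r_x,\delta_x)$. Factor $\alpha^{-1}\tau_x=\tau'(a,b)$ with $\tau'$ a shuffle, as in Lemma~\ref{lem:d_latching}(1). Then $\alpha^{-1}x$ is $(\tau',b\delta_x)$-latched to $a\, r_x$.
\item The simplex $a\,r_x$ is unlatched: this follows from the $\Sigma$-invariance argument of Lemma~\ref{lem:Lambda_well_defined}. Hence $a\,r_x$ is the root of $\alpha^{-1}x$, with latching data $(\tau',b\delta_x)$.
\item Substituting this into the formula and using uniqueness of the factorization in Lemma~\ref{lem:d_latching}(2) shows that both triples give the same element of $\LP_nX$. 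This is bookkeeping with shuffle decompositions.
\end{enumerate}

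\textbf{Commutativity of \eqref{eq:vnT_diagram}.} This is a direct computation:
\[\nu^C_n\varphi_k(\gamma,x,\delta)=\gamma'(1,\rho)\,\sigma(r_x,(\delta_x,\delta))=\gamma(\tau_x,1)\,\sigma(\sigma(r_x,\delta_x),\delta)=\gamma\,\sigma(x,\delta).\]
The last term is the map \eqref{eq:vn_via_colim}.

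\textbf{Cocone condition.} By finality of $\sT^{\le1}_n$ it suffices to prove $\varphi_{k+1}\alpha_k=\varphi_k\beta_k$ for the maps in \eqref{eq:maps_to_check}.
\begin{itemize}
\item The $\beta_k$ side sends $(\gamma,x,\delta,\delta')$ to $(\gamma,x,(\delta,\delta'))$.
\item The $\alpha_k$ side sends it to $(\gamma,\sigma(x,\delta),\delta')$.
\item By Lemma~\ref{lem:latch_transitive}, $\sigma(x,\delta)$ is $((\tau_x,1),(\delta_x,\delta))$-latched to $r_x$.
\item Any simplex that $\sigma(x,\delta)$ is latched to lies in $\mathcal P_{\sigma(x,\delta)}$, whose minimal element is unique. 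Since $r_x$ is unlatched, $r_x$ is the root of $\sigma(x,\delta)$.
\item Lemma~\ref{lem:latchingunique} then pins down the latching data of $\sigma(x,\delta)$ to $r_x$ as exactly $((\tau_x,1),(\delta_x,\delta))$.
\item With this, both composites agree by associativity of the shuffle factorizations in Lemma~\ref{lem:d_latching}(2).
\end{itemize}

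\textbf{Simplicial compatibility (main obstacle).} The step I expect to be hardest is showing that $\varphi_k$ commutes with face maps. The face maps on $\LP_nX$ are defined by \eqref{eq:d_latching} through the root of $d_i r_x$, whereas $\varphi_k(d_i(\gamma,x,\delta))$ is defined through the root of $d_ix$.
\begin{itemize}
\item Degeneracies are easy, by Lemma~\ref{lem:latching_and_structure_maps} (for $s_i$ of a root) and Lemma~\ref{lem:root_of_d_s}.
\item For faces, note that $d_ix=\tau_x\sigma(d_ir_x,d_i\delta_x)$. By transitivity (Lemma~\ref{lem:latch_transitive}), $d_ix$ is latched to $r_{d_ir_x}$.
\item This simplex is unlatched, so by Lemma~\ref{lem:rootunique} it is the root of $d_ix$.
\item By Lemma~\ref{lem:latchingunique}, the latching data of $d_ix$ to this root is the composite of $(\tau_{d_ir_x},\delta_{d_ir_x})$ with $(\tau_x,d_i\delta_x)$.
\end{itemize}
After this identification, the equality $\varphi_k d_i=d_i\varphi_k$ reduces to an identity among iterated shuffle factorizations, exactly parallel to the $d_id_j$ computation in the proof of Proposition~\ref{prop:latch_well_defined}. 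I would reuse that computation essentially verbatim.
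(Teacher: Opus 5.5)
Your proposal is correct and follows essentially the paper's route. Unique root data (Lemmas~\ref{lem:rootunique} and~\ref{lem:latchingunique}) give a map on $\Sigma_n^+\sma X_k\sma S^{n-k}$, one checks invariance under $\Sigma_k\times\Sigma_{n-k}$, and commutativity of \eqref{eq:vnT_diagram} is the same one-line computation.

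Your version is more careful than the paper's in three places:
\begin{enumerate}
\item You correctly identify the root of $\alpha^{-1}x$ as $a\,r_x$. The paper instead silently treats the non-shuffle $\upsilon\tau_x$ as latching data to $r_x$.
\item You check compatibility of $\varphi_k$ with faces and degeneracies, which the paper never verifies.
\item Your cocone check is not missing from the paper; it is carried out in the proof of Proposition~\ref{prop:latch_maps_commute}.
\end{enumerate}

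One point to make explicit: Lemma~\ref{lem:d_latching}(2) only applies when $\gamma$ is a shuffle, but your invariance check necessarily evaluates the formula at the non-shuffle $\gamma(\alpha,\beta)$. For such $\gamma$, read the formula as $\gamma(\tau_x,1)=\gamma'(c,\rho)$ with $c\in\Sigma_{|r_x|}$, and let $c$ act on the root, which stays unlatched.
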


\begin{proof}
	By Lemma~\ref{lem:rootunique} $r_x$ and $\delta_x$ are unique.  By Lemma~\ref{lem:d_latching}(2) $\gamma'$ and $\rho$ are unique. So we have a map 
  $\Sigma^+_n \sma X_k \sma S^{n-k} \rto \LP_nX$.

  Let $(\upsilon, \upsilon')\in \Sigma_k \times \Sigma_{n-k}$.  
  Then 
  \begin{align*}
  	\gamma\circ (\upsilon, \upsilon')\circ (\tau_x,1_{n-k}) &=\gamma \circ (\upsilon\circ \tau_x, \upsilon')
	\\
	&=\gamma'\circ (1_\ell,\rho)\circ (\tau_x^{-1},1_{n-k})\circ  (\upsilon\circ \tau_x, \upsilon')
	\\
	&=\gamma'\circ (1_\ell,\rho)\circ   (\tau_x^{-1}\circ \upsilon\circ \tau_x, \upsilon')
	\\
	&=\gamma'\circ (1_\ell,\rho)\circ   (\tau_x^{-1}\circ \upsilon\circ \tau_x, 1_{n-k})\circ (1_k,\upsilon')
  \end{align*}
  and
  \begin{equation}\label{eq:phi_equiv_1}
\begin{aligned}\varphi_k (\gamma\circ (\upsilon, \upsilon'), x, \delta)
  &= 
  (\gamma'\circ (1_\ell,\rho)\circ   (\tau_x^{-1}\circ \upsilon\circ \tau_x, 1_{n-k}), r_x,  (1_{k-\ell},\upsilon')(\delta_x,\delta))
  \\
    &= 
  (\gamma'\circ (1_\ell,\rho)\circ   (\tau_x^{-1}\circ \upsilon\circ \tau_x, 1_{n-k}), r_x, (\delta_x,\upsilon'\delta))
\end{aligned}
  \end{equation}
  
  By definition  $\tau_x\sigma(r_x,\delta_x)=x$, 
  and so $\upsilon x=\upsilon \tau_x\sigma(r_x,\delta_x)$.  Then 
  \begin{align*}
  \gamma\circ (\upsilon\tau_x,1_{n-k})
  &= \gamma' \circ (1_\ell,\rho)\circ (\tau_x^{-1},1_{n-k})\circ (\upsilon\tau_x,1_{n-k})
  \\
  &=\gamma' \circ (1_\ell,\rho)\circ (\tau_x^{-1}\upsilon\tau_x,1_{n-k})
  \end{align*}
  and 
  \begin{equation}\label{eq:phi_equiv_2}
\varphi_k (\gamma, \upsilon x,  \upsilon' \delta)
  =(\gamma' \circ (1_\ell,\rho)\circ (\tau_x^{-1}\upsilon\tau_x,1_{n-k}), r_x, (\delta_x, \upsilon' \delta)).\end{equation}
Then \eqref{eq:phi_equiv_1} and \eqref{eq:phi_equiv_2} implies $\varphi_k$ is defined on the quotient.

Since 
\begin{align*}
(\gamma,x,\delta)\mapsto (\gamma',r_x,\rho(\delta_x,\delta))&\mapsto
\gamma'\cdot \sigma(r_x, \rho(\delta_x,\delta))
\\
&=\gamma'(1,\rho)\cdot \sigma(r_x, (\delta_x,\delta))
\\
&=\gamma(\tau_x,1)\cdot \sigma(r_x, (\delta_x,\delta))
\\
&=\gamma \sigma(\tau_x\cdot r_x, (\delta_x,\delta))
\\
&=\gamma \sigma(\sigma(\tau_x\cdot r_x, \delta_x),\delta)
\\
&=\gamma \sigma(x,\delta)
\end{align*}
the diagram in \eqref{eq:vnT_diagram} commutes.
\end{proof}

\begin{lemma}\label{lem:equiv_unlatched}
  Every
  simplex in $P_{n,X,\redS}(k,n-k)$ is equivalent to a simplex whose
  $X$-component is unlatched.
\end{lemma}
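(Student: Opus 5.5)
We prove the statement by induction on $k$, using that simplices in $P_{n,X,\redS}(k,n-k)$ become equivalent in $\colim_{\sT_n} P_{n,X,\redS}$ whenever they are related through the images of the maps $P_{n,X,\redS}(j,n-j)\to P_{n,X,\redS}(k,n-k)$ for $j<k$.

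Fix a simplex $(\gamma,x,\delta)\in P_{n,X,\redS}(k,n-k)$ with $x\in X_k$ and $\delta\in S^{n-k}$. If $x$ is unlatched there is nothing to prove, so suppose $x$ is latched. By Lemma~\ref{lem:rootunique} and the notation following Lemma~\ref{lem:latchingunique}, $x$ has a root $r_x\in X_\ell$ with $\ell<k$, and
\[x=\tau_x\cdot\sigma_{k-\ell}(r_x,\delta_x).\]
Consider the object $(\ell,k-\ell,n-k)$ of $\sT_n$, which has morphisms to both $(\ell,n-\ell)$ and $(k,n-k)$. The simplex $(\gamma(\tau_x,1),\, r_x,\, \delta_x,\, \delta)$ lives in $P_{n,X,\redS}(\ell,k-\ell,n-k)$. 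Along the map to $(k,n-k)$ it goes to
\[(\gamma(\tau_x,1),\,\sigma(r_x,\delta_x),\,\delta)=(\gamma,\,\tau_x\cdot\sigma(r_x,\delta_x),\,\delta)=(\gamma,x,\delta),\]
where the middle equality uses the $\Sigma_k\times\Sigma_{n-k}$-balanced smash product. Along the map to $(\ell,n-\ell)$ it goes to $(\gamma(\tau_x,1),\,r_x,\,\sigma(\delta_x,\delta))$. Both images therefore represent the same simplex of the colimit. Lemma~\ref{lem:d_latching}(2) lets us rewrite $\gamma(\tau_x,1)=\gamma'(1,\rho)$ with $\gamma'$ an $(\ell,n-\ell)$-shuffle. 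Absorbing $\rho$ into the sphere coordinate turns the second image into $(\gamma',r_x,\rho\cdot(\delta_x,\delta))$, which is exactly the formula for $\varphi_k$ in Lemma~\ref{lem:define_phi}. Since $r_x$ is a root, it is unlatched: if it were latched to something lower, transitivity (Lemma~\ref{lem:latch_transitive}) would contradict the maximality in Definition~\ref{defn:root}. So we have reached an equivalent simplex with unlatched $X$-component in a single step, and no induction is actually needed.

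The main point to check is the first bullet: the description of the morphism $(\ell,k-\ell,n-k)\to(k,n-k)$ and the identification of $(\gamma(\tau_x,1),\sigma(r_x,\delta_x),\delta)$ with $(\gamma,x,\delta)$. This requires care with the order of smash factors and with the twist maps in the definition of $P$. The maps to $(k,n-k)$ involve only the $X$-structure map, so no twist $\mathrm{tw}$ appears on the $X$ side. On the $\redS$ side, the structure map of $\redS$ is the identity $S^{k-\ell}\sma S^{n-k}\to S^{n-\ell}$, so the twist contributes at most a permutation in $\Sigma_{n-\ell}$, which is absorbed into $\rho$. I expect this bookkeeping to be the only real obstacle; the degenerate case $\ell=0$ is handled the same way, using $\redS_0=*$ to land at the basepoint when appropriate.
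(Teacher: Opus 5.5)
Your argument is correct and is essentially the paper's proof. As the paper does, you pass to the root $r_x$ and take the simplex $(\gamma(\tau_x,1),r_x,\delta_x,\delta)$ at the intermediate object $(\ell,n-k)$ of $\sT_n$. This simplex maps to $(\gamma,x,\delta)$ in $P_{n,X,\redS}(k,n-k)$, and, after rewriting $\gamma(\tau_x,1)=\gamma'(1,\rho)$ via Lemma~\ref{lem:d_latching}(2), to $(\gamma',r_x,\rho\cdot(\delta_x,\delta))$ in $P_{n,X,\redS}(\ell,n-\ell)$.

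Two small points. The induction framing is unnecessary, as you yourself note. Also, the only place $\redS_0=*$ matters is $k=n$, where $P_{n,X,\redS}(n,0)$ is already a point; the case $\ell=0$ needs no special treatment.
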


\begin{proof}
  Suppose that $(\gamma,x,\delta) \in P_{n,X,\redS}(k,n-k)$, and
   $r_x\in X_{k-m}$ is the root of $x$, with $x$ $(\tau_x,\delta_x)$-latched to $r_x$.
Then   \[(\gamma\circ (\tau_x,1),r_x,\delta_x,\delta)\in
  P_{n,X,\redS}(k-m,n-k);\]and its image in $P_{n,X,\redS}(k,n-k)$ is
  $(\gamma,x,\delta)$.

	Use Lemma~\ref{lem:d_latching}(2) to write 
	\[\gamma\circ (\tau_x,1)= \gamma'\circ (1,\rho)\]
where $\gamma$ is a $(k,n-k)$-shuffle, $\tau_x$ is a $(k-m,k)$-shuffle, $\gamma'$ is a $(k-m, n-k+m)$-shuffle, and $\rho\in \Sigma_{n-k+m}$.
Then the image of
  \[(\gamma\circ (\tau_x,1),r_x,\delta_x,\delta)=(\gamma'\circ (1,\rho),r_x,\delta_x,\delta)\] 
in  $P_{n,X,\redS}(k-m,n-(k-m))$ is
  $(\gamma',r_x,\rho\cdot(\delta_x,\delta))$.  
\end{proof}

We are now ready to prove Proposition~\ref{prop:latch_maps_commute}.

\begin{proof}[Proof of Proposition~\ref{prop:latch_maps_commute}] \label{proof:latch_maps_commute}
To show the maps in \eqref{eq:define_phi} induce a map
\begin{equation}\label{eq:define_nu}
\mu_n:\colim P_{n,X,\redS} \rto \LP_nX
\end{equation}
it is enough to 
  check that
  $\varphi_k\beta_k = \varphi_{k+1}\alpha_k$ for all $k$ and $\alpha_k$ and $\beta_k$ as in \eqref{eq:maps_to_check}. 
  
  Fix a simplex
  $(\gamma, x, \delta, \delta')$ in
  $\Sigma^+_n \sma_{\Sigma_k \times \Sigma_1 \times \Sigma_{n-k-1}} X_k \sma S^1
  \sma S^{n-k-1}$ 
  then 
  \begin{align*}
  \varphi_{k+1}\alpha_k(\gamma, x, \delta, \delta')&=\varphi_{k+1}(\gamma, \sigma(\tau_x\cdot \sigma(r_x,\delta_x),\delta), \delta') 
  \\
  &= \varphi_{k+1}(\gamma,
    (\tau_x,1)\cdot \sigma(r_x, (\delta_x,\delta)), \delta')
   \\
   &= (\gamma', r_x, \rho\cdot (\delta_x, \delta,\delta'))
  \end{align*}
  and 
  \begin{align*}
  \varphi_k\beta_k(\gamma, x, \delta, \delta')&=\varphi_k(\gamma, x, (\delta,\delta'))
  \\
  &=(\gamma', r_x, \rho\cdot (\delta_x, \delta, \delta'))
  \end{align*}
  Thus, by the universal property of the colimit, the morphism $\mu_n$ is
  well-defined. 

Since the diagram in \eqref{eq:vnT_diagram} commutes, the diagram in \eqref{eq:latch=latch} 
commutes.
\end{proof}

\begin{lemma}\label{lem:injective_unlatched} If $X$ is increasing, 
 $\mu_n$ maps simplices whose $X$-component is unlatched to
  distinct simplices.
\end{lemma}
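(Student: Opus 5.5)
We need to show: if $(\gamma,x,\delta)\in P_{n,X,\redS}(k,n-k)$ and $(\gamma_1,x_1,\delta_1)\in P_{n,X,\redS}(k_1,n-k_1)$ have $x$ and $x_1$ unlatched and the same image under $\mu_n$, then they are equal in $L^T_nX$. In fact we will show they are equal already as simplices of $\coprod_k P_{n,X,\redS}(k,n-k)$, up to the coset relation.

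The first step is to compute $\mu_n$ on such a simplex. Since $x$ is unlatched, the convention after Lemma~\ref{lem:latchingunique} gives $r_x=x$, $\tau_x=1$ and $\delta_x=*$. Hence in Lemma~\ref{lem:define_phi} we have $\gamma(\tau_x,1)=\gamma$. We take $\gamma$ to be a $(k,n-k)$-shuffle representative of its coset, which is possible by Lemma~\ref{lem:d_latching}(1) after absorbing the $\Sigma_k\times\Sigma_{n-k}$ part into $x$ and $\delta$. Then the decomposition $\gamma=\gamma'(1,\rho)$ of Lemma~\ref{lem:d_latching}(2) is $\gamma'=\gamma$, $\rho=1$, by uniqueness. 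So
\[\mu_n(\gamma,x,\delta)=\varphi_k(\gamma,x,\delta)=(\gamma,x,\delta)\in \LP_nX.\]
Here we must check that absorbing the coset does not destroy unlatchedness. By Lemma~\ref{lem:Lambda_well_defined}, $\upsilon x$ is unlatched whenever $x$ is, for $\upsilon\in\Sigma_k$: if $\upsilon x$ were latched, then $x=\upsilon^{-1}(\upsilon x)$ would be latched too.

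The second step reads off equality. Suppose $\mu_n(\gamma,x,\delta)=\mu_n(\gamma_1,x_1,\delta_1)$ with both in normalized form. Then $(\gamma,x,\delta)=(\gamma_1,x_1,\delta_1)$ in the wedge $\bigvee_k S_{(k,n-k)}^+\sma X_k\sma S^{n-k}$. This forces $k=k_1$, equal shuffles, and equality of the smash components. The original simplices were representatives of these normalized ones in $\Sigma_n^+\sma_{\Sigma_k\times\Sigma_{n-k}}X_k\sma S^{n-k}$, so they are equal there, and hence also in the colimit.

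The main point needing care is basepoints. If $x$ or $\delta$ is the basepoint, both sides are the basepoint and there is nothing to prove; otherwise the wedge summands are disjoint, so the index $k$ is recovered. No use of the increasing hypothesis beyond the well-definedness of roots (Lemma~\ref{lem:rootunique}) is needed: the content is simply that $\varphi_k$ restricted to unlatched simplices is the tautological inclusion of the normalized representatives.
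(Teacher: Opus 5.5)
Your proof is correct, but it handles the same-summand case differently from the paper.

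\textbf{The paper's route.} The paper splits into two cases.
\begin{enumerate}
\item For representatives in different summands, it uses exactly your observation: an unlatched $x$ is its own root, so $\varphi_k$ returns the triple unchanged.
\item For two representatives in the same summand $P_{n,X,\redS}(k,n-k)$, it pushes the equality down to $X_n$ to get $\sigma(x_1,\delta_1)=\gamma_1^{-1}\gamma_2\cdot\sigma(x_2,\delta_2)$. It then invokes condition~\eqref{it:latched} of Definition~\ref{def:increasing} to force $\gamma_1=\gamma_2$, and uses monicity of the structure maps to get $x_1=x_2$ and $\delta_1=\delta_2$.
\end{enumerate}

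\textbf{Your route.} You apply the tautological observation in both cases. Once $\gamma$ is normalized to a shuffle, $\varphi_k$ is literally the inclusion of the triple into $\LP_nX$. (You correctly check that $\Sigma_k$-translates of unlatched simplices stay unlatched.) So equal images means equal triples, whatever $k$ is.

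\textbf{What each buys.} Your argument is shorter. It uses no property of increasing spectra beyond what is needed for $\mu_n$ to exist. It also makes explicit the shuffle normalization and basepoint bookkeeping that the paper leaves implicit.

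It also avoids a delicate step in the paper. There, condition~\eqref{it:latched} is applied at level $m$, which yields a common latching target only for the level-$(m-1)$ simplices $\sigma(x_i,-)$, not for $x_i$ themselves. So an extra root argument would be needed whenever $m$ exceeds the level of $x_i$ by more than one.

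If completed, the paper's route would give a stronger statement: $\nu^C_n\circ\varphi_k$ is injective on unlatched representatives, which is a flatness-type fact. That strength is not needed for this lemma. The paper obtains injectivity of $\nu^C_n$ separately in Proposition~\ref{prop:inc->flat}, from uniqueness of roots and latchings.
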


\begin{proof} 
  Fix $k$, and suppose that
  $\varphi_k(\gamma_1,x_1,\delta_1) = \varphi_k(\gamma_2,x_2,\delta_2)$, with
  $x_1,x_2$ unlatched.  Then
  \[\sigma_k(x_1,\delta_1) = \gamma_1^{-1}\gamma_2 \sigma_k(x_2,\delta_2).\]
    We
  begin by proving by contradiction that $\gamma_1 = \gamma_2$.  Suppose
  otherwise, and let $m$ be maximal such that $\gamma_1(m) \neq \gamma_2(m)$, so
  that $\gamma_1^{-1}\gamma_2(\ell) = \ell$ for $\ell > m$; we can then write
  \[\sigma_{n-m}\big(\sigma_{k-(n-m)}(x_1,\delta_1'),\delta_1''\big) =
    \sigma_{n-m}\big(\gamma_1^{-1}\gamma_2
    \sigma_{k-(n-m)}(x_2,\delta_2'),\delta_2''\big).\]
  (Here, $\delta_i'$ is the first $k-(n-m)$ coordinates of $\delta_i$, and
  $\delta_i''$ are the remaining coordinates.  By abuse of notation, we write
  $\gamma_1^{-1}\gamma_2$ for the restriction to the first $m$ coordinates.)
  Since $X$ is increasing, $\sigma_{n-m}$ is injective, and we conclude that
  $\delta_1'' = \delta_2''$ and that
  \[\sigma(x_1,\delta_1') = \gamma_1^{-1}\gamma_2 \sigma(x_2,\delta_2').\]
  Since $\gamma_1^{-1}\gamma_2(m) \neq m$, by
  \ref{def:increasing}\eqref{it:latched} there exists a simplex which both $x_1$
  and $x_2$ are latched to.  This is a contradiction, since we assumed that both
  $x_1$ and $x_2$ are unlatched.  Thus we must have $\gamma_1 = \gamma_2$, so
  that we can conclude that $\sigma_k(x_1,\delta_1) = \sigma_k(x_2,\delta_2)$.
  But since $X$ is increasing $\sigma_k$ is injective, and we see that
  $x_1 = x_2$ and $\delta_1=\delta_2$, as desired.

  Now suppose that we have $k_1 < k_2$, and $(\gamma_i,x_i,\delta_i)$ with $x_i$
  unlatched, such that
  \[\varphi_{k_1}(\gamma_1,x_1,\delta_1) =
    \varphi_{k_2}(\gamma_2,x_2,\delta_2).\] By definition, this states that
  $(\gamma_1,x_1,\delta_1) = (\gamma_2,x_2,\delta_2)$, since $x_i$ is its own
  root.  In particular, we see that the images of $\varphi_{k_1}$ and
  $\varphi_{k_2}$ are disjoint when restricted to roots.
\end{proof}

We are now ready to prove Proposition~\ref{prop:latch=latch}.

\begin{proof}[Proof of Proposition~\ref{prop:latch=latch}] \label{proof:latch=latch}
Notice that if we restrict
  $\mu$ to the $k=n-1$-component, 

  Now suppose that we have $k_1 < k_2$, and $(\gamma_i,x_i,\delta_i)$ with $x_i$
  unlatched, such that
  \[\varphi_{k_1}(\gamma_1,x_1,\delta_1) =
    \varphi_{k_2}(\gamma_2,x_2,\delta_2).\] By definition, this states that
  $(\gamma_1,x_1,\delta_1) = (\gamma_2,x_2,\delta_2)$, since $x_i$ is its own
  root.  In particular, we see that the images of $\varphi_{k_1}$ and
  $\varphi_{k_2}$ are disjoint when restricted to roots.
\end{proof}

\bibliographystyle{alpha}
\bibliography{CPZ}

\end{document}